\documentclass[12pt,reqno]{amsart}

%%%%%%%%%%%%basic packages%%%%%%%%%%%%%
%%%%%%%%%%%%%%%%%%%%%%%%%%%%%%%%%%%%%%%

\usepackage{graphicx}
\usepackage{mathrsfs}
\usepackage{color}
\usepackage{comment}
\usepackage{amssymb}
\usepackage{amsmath}
\usepackage{amscd}
\usepackage{physics}
\usepackage{mathtools}
\usepackage{tikz, tikz-cd}
\usepackage
[colorlinks=true, urlcolor=blue, citecolor=blue, linkcolor=blue]{hyperref}
\usepackage{enumitem}

%%%%%%%%%%%%%%stylizations%%%%%%%%%%%%%
%%%%%%%%%%%%%%%%%%%%%%%%%%%%%%%%%%%%%%%
\usepackage{microtype} %microformatting
\usepackage{lmodern}
\usepackage{csquotes}
\usepackage{nicefrac}

\setlength{\textwidth}{6.5in}
\setlength{\textheight}{8.2in}
\setlength{\evensidemargin}{0in}
\setlength{\oddsidemargin}{0in} 
\setlength{\topmargin}{.3in}
\calclayout

%%%%%%%%%%%table of contents%%%%%%%%%%%%
%%%%%%%%%%%%%%%%%%%%%%%%%%%%%%%%%%%%%%%%

%indent the toc

\let\oldtocsection=\tocsection
\let\oldtocsubsection=\tocsubsection
\let\oldtocsubsubsection=\tocsubsubsection

\renewcommand{\tocsection}[2]{\hspace{0em}\oldtocsection{#1}{#2}}
\renewcommand{\tocsubsection}[2]{\hspace{3em}\oldtocsubsection{#1}{#2}}
\renewcommand{\tocsubsubsection}[2]{\hspace{6em}\oldtocsubsubsection{#1}{#2}}

%%%%%%%%%%%theorem environments%%%%%%%%%
%%%%%%%%%%%%%%%%%%%%%%%%%%%%%%%%%%%%%%%%
\numberwithin{equation}{section}
%%%%%%%%%%%

\theoremstyle{definition}
\newtheorem{theorem}{Theorem}[section]
\newtheorem{lemma}{Lemma}[section]
\newtheorem{prop}{Proposition}[section]
\newtheorem*{theorem*}{Theorem}
\newtheorem{corollary}{Corollary}[section]

\newtheorem{theorema}{Theorem}

\newtheorem{definition}{Definition}[section]

\newtheorem{quest}[]{Question}
\newtheorem{conjecture}[]{Conjecture}
\newtheorem{obs}{Observation}[section]
\newtheorem{exam}{Example}[section]
\newtheorem{remark}{Remark}[section]

%%%%%%%%%%%%%%%macros%%%%%%%%%%%%%%%
%%%%%%%%%%%%%%%%%%%%%%%%%%%%%%%%%%%%

\newcommand{\diam}{\mathrm{diam}}
\newcommand{\vol}{\mathrm{vol}}
\newcommand{\scal}{\mathrm{Sc}}
\newcommand{\msc}{\mathrm{mscal}}
\newcommand{\sq}{\mathrm{Sq}^}

\newcommand{\uw}{\mathrm{UW}_ }

% mathbf

\newcommand{\RR}{\mathbf{R}}
\newcommand{\ZZ}{\mathbf{Z}}

%%%%

%%%%%%%%%%%%%%%%%%%%%%%%%%%%%%%%%%%%%%

%%%%%%%%%%%%%%%%%%%%%%%%%%%%%%%%%%%%%%

\begin{document}
\title{Urysohn width and macroscopic scalar curvature}
\author{Aditya Kumar}
\address{Department of Mathematics, University of Maryland, 4176 Campus Dr, College
Park, MD 20742, USA}
\email{akumar65@umd.edu}

\author{Balarka Sen}
\address{School of Mathematics, Tata Institute of Fundamental Research. 1, Homi Bhabha Road, Mumbai-400005, India}
\email{balarka2000@gmail.com, balarka@math.tifr.res.in}
\begin{abstract}
    We show that the macroscopic version of Gromov’s Urysohn width conjecture for scalar curvature is false in dimensions four and above. This is based on (1) a novel estimate on the codimension two Urysohn width of circle bundles over manifolds with large hypersphericity radius, and (2) a notion of ruling for Riemannian manifolds that yields circle bundles with total spaces admitting metrics of positive macroscopic scalar curvature. Along the way, we also show that Urysohn width is not continuous under Cheeger-Gromov collapsing limits. This article is a continuation of our study of metric invariants and scalar curvature for circle bundles over large Riemannian manifolds initiated in \cite{pscbundle}.
\end{abstract} 

\maketitle

\tableofcontents

\section{Introduction}
\subsection{Background and motivation}
\begin{definition}[Urysohn width]
    Let $X$ be a metric space. The Urysohn $k$-width of $X$, denoted by $\mathrm{UW}_k(X)$, is the infimum over all $t > 0$ such that there is a continuous map $f: X \to K$ to a simplicial complex $K$ of dimension at most $k$ satisfying the property that the diameter of each fiber of $f$ is at most $t$, i.e., for all $p \in K$, $\mathrm{diam}(f^{-1}(p)) \leq t$.
\end{definition}

The notion of Urysohn width originated in the works of Pavel Urysohn on topological dimension theory \cite{ury26,ale33}. Heuristically, $\uw k(X) \leq \varepsilon$ indicates that the metric space $X$ looks like a $k$-dimensional simplicial complex upon ignoring features at scale $\varepsilon$ or less. In Riemannian geometry,  the notion of Urysohn width was popularized by several works of Gromov \cite{gromovwidth,glarge}. It arises naturally in the study of non-collapsing and thick-thin decompositions and has also played an important role in systolic geometry. Some of the recent works on these topics have been \cite{guth2017,nabu,papu,sabu,bb24,abg24}. For a comprehensive treatment of Urysohn width, we refer the reader to the thesis of Balitskiy \cite{balthesis}.

\begin{definition}[Scalar curvature]
    Let $(M^d, g)$ be a Riemannian manifold. The scalar curvature of $M$ at a point $x$, denoted by $\mathrm{Sc}_M(x)$, is the coefficient appearing in the $(d+2)$\textsuperscript{th} order Taylor expansion of the volume of an infinitesimal ball centered at $x$:
    $$\vol_M(B_r(x)) = \omega_d r^d \left (   1 - \frac{\scal_M(x)}{6(d+2)}r^2 + O(r^3)\right ),$$
    where $\omega_d$ denotes the volume of a unit ball in the Euclidean $d$-space.
\end{definition}

The scalar curvature can be computed infinitesimally as the trace of the Ricci curvature tensor $\mathrm{Sc}(x) = \mathrm{tr}~ \mathrm{Ric}_x$. The study of the geometry and topology of manifolds possessing \emph{positive} scalar curvature metrics has seen an explosion of activity in recent years. The primary motivation for this article is the following outstanding conjecture of Gromov on the Urysohn width of Riemannian manifolds with positive scalar curvature (see \cite[2.A]{glarge}, \cite[G4]{gromovwidth}, \cite[2\textonehalf]{Gro96}).

\begin{conjecture}[Gromov]\label{conj-gro}
    Let $M^d$ be a complete Riemannian manifold of dimension $d$. If $M$ admits a metric with scalar curvature $\mathrm{Sc} \geq \sigma^2 > 0$, then there exists a dimensional constant $C_d$ such that $\mathrm{UW}_{d-2}(M) \leq C_d/\sigma$.
\end{conjecture}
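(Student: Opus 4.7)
My plan is induction on the dimension $d$. The base case $d = 3$ is a theorem of Gromov: if $\scal \geq \sigma^2$, then Gauss--Bonnet applied to regular components of level sets of the distance function from a basepoint, combined with stability, forces intrinsic diameter $\lesssim 1/\sigma$; foliating $M^3$ by such level sets then yields $\mathrm{UW}_1(M^3) \leq C_3/\sigma$.

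For the inductive step $d - 1 \Rightarrow d$, I would use a $\mu$-bubble slicing in the Gromov--Zhu / Chodosh--Li framework. Fix a proper reference function $\rho : M \to \RR$ and produce a discrete sequence of stable $\mu$-bubbles $\Sigma_1, \Sigma_2, \ldots \subset M$ with prescribed mean curvatures tuned so that (i) consecutive pairs $\Sigma_i, \Sigma_{i+1}$ cobound a slab $W_i$ of $\rho$-width $\leq C/\sigma$, and (ii) each $\Sigma_i^{d-1}$ admits a conformally related metric with $\scal_{\Sigma_i} \geq (\sigma')^2$, with $\sigma' \asymp \sigma$. By induction, each $\Sigma_i$ maps to a $(d-3)$-complex $K_i$ with fiber diameter $\leq C_{d-1}/\sigma$; deformation retracting $W_i$ onto $\Sigma_i$, composing with $\Sigma_i \to K_i$, and gluing along $\Sigma_i \cap \partial W_{i-1}$ would yield a map $M \to K$ to a $(d-2)$-complex with controlled fiber diameters.

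The main obstacle---and precisely why this conjecture is still open for $d \geq 4$---is controlling the fibers of the slab retraction $W_i \to \Sigma_i$. A slab of small $\rho$-width can still be transversely complicated (components of $\Sigma_i$ of vastly different intrinsic size, cuspidal portions), and the conformally rescaled metric on $\Sigma_i$ used in the induction is \emph{not} the induced ambient metric, so the diameter comparison is delicate. Additional technical layers are the singularities of $\mu$-bubbles in dimensions $d \geq 8$ (requiring dimensional descent or generic regularity) and the combinatorial gluing of the $K_i$ along shared boundary components. A full argument would presumably require a substantial refinement of the band-width machinery together with a quantitative nerve-type approximation in the spirit of Liokumovich--Nabutovsky--Rotman or Papasoglu. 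In view of the paper's result that the \emph{macroscopic} analogue fails for $d \geq 4$, any proof of Conjecture~\ref{conj-gro} must genuinely exploit the infinitesimal nature of scalar curvature rather than only its averaged version---which suggests that a purely $\mu$-bubble-based argument, whose scalar input is only a Bonnet--Myers-type diameter bound, is unlikely to suffice, and some sharper input (e.g.\ a Dirac-operator or warped-product rigidity ingredient) may ultimately be necessary.
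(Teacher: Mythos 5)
This statement is Conjecture~\ref{conj-gro}, which the paper does \emph{not} prove; it is stated as an open problem, known only in dimension $3$ (due to Liokumovich--Maximo and Chodosh--Li, not Gromov as your base case suggests), and the paper's main contribution is to show that the \emph{macroscopic} analogue of this conjecture fails for $d \geq 4$. So there is no proof in the paper to compare against, and your proposal --- as you yourself acknowledge --- is a strategy sketch rather than a proof. To be concrete about why the inductive step as written cannot close: (i) the induction hypothesis applies to $\Sigma_i$ equipped with a conformally rescaled metric whose conformal factor is not controlled, so a bound $\mathrm{UW}_{d-3}(\Sigma_i, \tilde g) \leq C_{d-1}/\sigma$ gives no bound on fiber diameters measured in the induced ambient metric, which is what the gluing requires; and (ii) a band $W_i$ of small width in the sense of $\dist(\partial_- W_i, \partial_+ W_i) \leq C/\sigma$ need not admit \emph{any} retraction onto $\Sigma_i$ with fibers of controlled diameter --- small band width constrains only the separation of the two boundary components, not the transverse geometry of the slab. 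Either of these alone is fatal to the argument, not merely a technical layer.

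Your closing observation is the sound part: since the paper constructs manifolds with $\msc \geq \sigma$ and $\mathrm{UW}_{d-2} \gtrsim n$ for $d \geq 4$ (Theorem~\ref{thm-final}), any eventual proof of Conjecture~\ref{conj-gro} must exploit genuinely infinitesimal information about scalar curvature that is invisible to unit-ball volume bounds. But this is an argument about what a proof must look like, not a proof. The honest conclusion is that the statement remains open for $d \geq 4$, and your writeup should be presented as a discussion of obstructions rather than as a proof proposal.
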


The primary evidence in this direction is the following simple observation: 

\begin{exam}\label{eg-xs2}
    Given any closed Riemannian manifold $M^{d-2}$, consider its product with a round $2$-sphere of radius $\varepsilon$, i.e., the manifold $X^d = M^{d-2} \times S^2({\varepsilon})$ equipped with the product metric. Then, for $\varepsilon > 0$ sufficiently small, the scalar curvature of $X$ satisfies:
    $$\mathrm{Sc}_X = \mathrm{Sc}_M + 2 \varepsilon^{-2} \geq \varepsilon^{-2}.$$
    Fibers of the projection $X \to M$ have diameter $\mathrm{diam}(S^2(\varepsilon)) = \pi \varepsilon$. Thus, $\mathrm{UW}_{d-2}(X) \lesssim \varepsilon.$
\end{exam}

Conjecture \ref{conj-gro} asserts that every complete manifold with a uniform positive lower bound on the scalar curvature exhibits the same behavior as Example \ref{eg-xs2}. One could view Conjecture \ref{conj-gro} as the assertion that if the volume of infinitesimal balls in $(M, g)$ is less than that of Euclidean balls of the same radius, then the codimension $2$ Urysohn width of $M$ is uniformly bounded. So far, Conjecture \ref{conj-gro} is only known to hold in dimension 3 \cite{lm,cl}. In fact, even the codimension $1$ width case involving a uniform upper bound on $\uw{d-1}(M^d)$ remains unknown.

If instead of an upper bound on the volume of balls with \emph{infinitesimal radius}, one has an upper bound on the volume of balls of \emph{unit radius}, then this provides a large scale or \emph{macroscopic} notion of positivity of scalar curvature. The following remarkable result due to Guth \cite{guth2006,guth2017} (see also, \cite{papu,nabu}) deduces a uniform upper bound on $\uw{d-1}(M)$ if $M$ has sufficiently positive macroscopic scalar curvature, i.e., the volume of every unit ball in $M$ is bounded above by a sufficiently small dimensional constant. 
\begin{theorem}[Guth]\label{thm-guth-codim1} 
    There exists a constant $\delta_d >0$ such that for any closed Riemannian manifold $(M^d,g)$, if every ball of radius $1$ in $M$ has volume at most $\delta_d$, then $\uw{d-1}(M) \leq 1$.
\end{theorem}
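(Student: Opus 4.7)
The plan is to construct a continuous map $f \colon M \to K$ to a simplicial complex $K$ of dimension at most $d-1$, all of whose fibers have diameter at most $1$. I would realize $K$ as the nerve of an open cover $\mathcal{U} = \{U_i\}$ of $M$ and take $f$ to be the barycentric map induced by a partition of unity subordinate to $\mathcal{U}$. For the fibers to have diameter at most $1$, each $U_i$ must have diameter at most some constant less than $1$; for the nerve to have dimension at most $d-1$, the multiplicity of $\mathcal{U}$ must be at most $d$, i.e.\ no $d+1$ sets share a common point.

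For the cover, I would start with a Vitali-type family of balls. Choose a maximal $\varepsilon$-separated net $\{x_i\}$ in $M$ for some small $\varepsilon < 1/10$, so the balls $B(x_i, 2\varepsilon)$ cover $M$ and the diameter condition is immediate. To control multiplicity, if $k$ of the balls $B(x_{i_j}, 2\varepsilon)$ meet at a common point $p$, then $\{x_{i_j}\}$ lies in $B(p, 2\varepsilon)$ and is $\varepsilon$-separated, so the small balls $B(x_{i_j}, \varepsilon/2)$ are disjoint inside $B(p, 3\varepsilon)$. A direct volume comparison fails because we lack a lower bound on $\vol(B(x_{i_j}, \varepsilon/2))$, but one can remedy this using a variable-radius Vitali family: at each $x$, take the smallest $r(x)$ for which $\vol(B(x, r(x)))$ equals a fixed target $\eta_d \ll \delta_d$, and apply the packing argument to this family. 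Continuity of $r$ ensures radii of balls meeting at a point are comparable, yielding a multiplicity bound $N_d$ depending only on $d$.

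The hardest step is reducing the multiplicity from this dimensional constant $N_d$ down to the optimal $d$, so that $\dim K \leq d-1$ rather than $N_d - 1$. I would attempt this by induction on dimension: applying the coarea formula to $\dist(\cdot, p_0)$, one finds a regular value $t$ such that the level set $\Sigma_t$ has small $(d-1)$-volume and inherits, with some care, a ball-volume condition intrinsically in dimension $d-1$. Applying the theorem to each $\Sigma_t$ yields maps $\Sigma_t \to K_t$ with $\dim K_t \leq d-2$, which combine with $\dist$ itself into a map $M \to \RR \times K_t$ with total target dimension $d-1$ and fibers of controlled diameter. The main obstacle I expect is the careful calibration of constants: the threshold $\delta_d$ must be chosen recursively so each slicing loses only a controlled multiplicative factor, and the intrinsic geometry of the slices $\Sigma_t$ must genuinely inherit a ball-volume bound — a subtle point since intrinsic distances on $\Sigma_t$ can differ substantially from extrinsic ones.
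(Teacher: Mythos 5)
First, note that the paper does not prove this statement: Theorem \ref{thm-guth-codim1} is quoted from Guth's work (with alternative proofs in the cited papers of Papasoglu and Nabutovsky), so there is no internal proof to compare against. Judging your proposal on its own merits, it has a genuine gap, and the gap sits exactly where the entire difficulty of the theorem lies.

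Your first half (a variable-radius Vitali family, a partition of unity, and the nerve) can at best produce a cover of multiplicity $N_d$ for some large dimensional constant, hence a bound on $\mathrm{UW}_{N_d-1}(M)$ --- a far weaker statement than the codimension-one bound. Reducing the nerve dimension from $N_d-1$ to $d-1$ \emph{is} the theorem, and your two halves do not actually combine: the bounded-multiplicity cover plays no role in the slicing argument, so the first half is essentially a detour. The second half is the right skeleton (it is the skeleton of Papasoglu's proof), but as written the induction does not close, for precisely the reason you flag and then do not resolve. The coarea formula gives a level set $\Sigma_t$ of $\mathrm{dist}(\cdot,p_0)$ with small $(d-1)$-dimensional \emph{volume}, but this says nothing about the volumes of intrinsic unit balls in $\Sigma_t$: the intrinsic metric on a level set of a distance function in a Riemannian manifold can be wildly larger than the restricted metric, and $\Sigma_t$ need not even be a manifold at non-regular values. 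So the inductive hypothesis, as you have stated it (a theorem about closed Riemannian $(d-1)$-manifolds), simply does not apply to the slices. No calibration of constants fixes this; the inductive \emph{statement} itself must be changed. The known resolutions either (i) replace the hypothesis by a bound on the $(d-1)$-dimensional Hausdorff content of $S\cap B(x,r)$ for subsets $S\subset M$ carrying the restricted ambient metric, so that slices are handled as subsets of $M$ rather than as abstract Riemannian manifolds and the content bound genuinely passes to them via coarea (Papasoglu), or (ii) run Guth's original multi-scale selection of ``good balls,'' in which the multiplicity bound of $d$ comes from a chain of balls with geometrically decreasing radii rather than from slicing. Without one of these reformulations the proposal proves only the easy bounded-multiplicity statement, not Theorem \ref{thm-guth-codim1}.
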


The notion of macroscopic scalar curvature and its analogy with scalar curvature was discussed at length by Guth in his ICM address \cite[Section 7]{guthicm}. In particular, he states the metaphor that ``the macroscopic scalar curvature is like the scalar curvature." \cite[Metaphor 3, pg.~11]{guthicm}.
In consonance with this metaphor, in the past few years, there have been several works resolving macroscopic analogs of known as well as conjectural results for scalar curvature \cite{guthsys,af17,bk19,bs21,alp22,tgmm}. Nonetheless, as observed in \cite{abg24}, a straightforward generalization of Theorem \ref{thm-guth-codim1} to codimension $2$ Urysohn width does not hold. We give two examples to illustrate this.

\begin{exam}\label{eg-badegs}
    Let $S^d(r)$ denote a $d$-sphere with the round metric of radius $r$. Let $\varepsilon > 0$ be very small and $R \gg 1$ be very large.
    \begin{enumerate}
        \item\label{eg-badeg1} Consider the manifold $S^1(\varepsilon) \times S^2(R)$, with the product metric. Unit balls of this manifold are roughly isometric to $S^1(\varepsilon) \times D^2(1)$. Hence, they have volume $\sim O(\varepsilon)$. However, by the fiber contraction lemma (Lemma \ref{lem-fcl}), $\mathrm{UW}_1(S^1(\varepsilon) \times S^2(R)) \sim R$.
        \item\label{eg-badeg2} Consider $S^3$ as the total space of the Hopf fibration $S^1({\varepsilon}) \to S^3 \xrightarrow{\pi} S^2(R)$ equipped with the Berger metric $g_{\varepsilon, R} = \pi^* g_{S^2(R)} + \varepsilon^2 \theta \otimes \theta$. Once again, unit balls of $(S^3, g_{\varepsilon, R})$ are roughly isometric to $S^1(\varepsilon) \times D^2(1)$. Therefore, they have volume $\sim O(\varepsilon)$. Since $\pi$ is not nullhomotopic, the fiber contraction lemma implies $\mathrm{UW}_1(S^3, g_{\varepsilon, R}) \sim R$.
    \end{enumerate}
\end{exam}      

Note that as scalar curvature is an infinitesimal quantity, a bound of the form $\mathrm{Sc} \geq \sigma^2$ on $(M, g)$ continues to hold upon passage to the universal cover $(\widetilde{M}, \widetilde{g})$. The first example is eliminated by imposing a control on the volumes of balls in the universal cover. Indeed, the universal cover of $S^1(\varepsilon) \times S^2(R)$ is $\mathbf{R} \times S^2(R)$, and unit balls here have substantial volume. However, it does not eliminate the second example, as $S^3$ is simply connected. 

To remedy this, Alpert-Balitskiy-Guth \cite{abg24} propose that one should impose a control on the volume of unit balls in the universal cover of a slightly larger concentric ball. {We reproduce their proposal as a definition below. Note that the following definition slightly differs from the notion of macroscopic scalar curvature defined in \cite[Section 7]{guthicm}.}

\begin{definition}[Macroscopic scalar curvature]\label{def-msc}
    Let $(M^d,g)$ be a Riemannian manifold. Let $V^d_r(\sigma)$ denote the volume of a ball of radius $r$ in a simply connected Riemannian space form of dimension $d$ and constant scalar curvature $\sigma$. 
    \begin{itemize}
        \item Let $x \in M$ and let $B_{2r}(x)$ be the $2r$-ball centered at $x$.
        \item Let $\widetilde{B}_{2r}(x)$ be the universal cover of $B_{2r}(x)$ equipped with the pulled back metric.
        \item Select $\tilde x \in \widetilde{B}_{2r}(x)$ lying over $x$. Let $B_r(\tilde x) \subset \widetilde{B}_{2r}(x)$ denote the $r$-ball centered at $\tilde x$.
    \end{itemize}
    The macroscopic scalar curvature at $x$ and at scale $r > 0$, denoted by $\msc(x; r)$, is defined to be the unique $\sigma \in \RR$ such that $\vol(B_r(\tilde x)) = V^d_r(\sigma)$. Fixing the scale at $r = 1$, we shall simply say the macroscopic scalar curvature at $x$ is $\msc(x) := \msc(x; 1)$.
\end{definition}

\begin{remark}\label{rem-mscprop} We summarize some basic properties of macroscopic scalar curvature. 
\begin{enumerate}
    \item For any fixed $r > 0$, $V^d_r(\sigma)$ is a decreasing function of $\sigma$. Moreover, $\lim_{\sigma \to \infty} V^d_r(\sigma) = 0$ and $\lim_{\sigma \to -\infty} V^d_r(\sigma) = \infty$. Therefore, 
    \begin{equation*}
        \msc(x; r) \geq \sigma \iff \vol(B_r(\widetilde{x})) \leq V^d_r(\sigma).
    \end{equation*}
    \item Macroscopic scalar curvature scales like the scalar curvature. More precisely, let $(M, g)$ be a Riemannian manifold and $\lambda > 0$. Then, for all $r > 0$, 
    $$\msc_{\lambda^2 g}(x; \lambda r) = \lambda^{-2} \cdot \msc_{g}(x; r).$$
    \item At infinitesimal scales, macroscopic scalar curvature is equal to scalar curvature, i.e.,
    $$\lim_{r \to 0} \msc(x; r) = \mathrm{Sc}(x).$$
\end{enumerate}
\end{remark}

In this article, we will address the macroscopic version of Conjecture \ref{conj-gro}, as formulated by \cite[Conjecture 9]{abg24}. Our formulation is equivalent to theirs via an appropriate scaling and using Property (3) of Remark \ref{rem-mscprop}.

\begin{conjecture}\label{conj-abgtrue}
 There exists dimensional constants $C_d, \sigma_d > 0$ such that the following holds. Let $M^d$ be any closed Riemannian manifold with $\msc \geq \sigma_d$. Then, $\mathrm{UW}_{d-2}(M) \leq C_n$.
\end{conjecture}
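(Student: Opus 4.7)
The approach I would take begins from Guth's codimension-one result (Theorem~\ref{thm-guth-codim1}) and attempts to gain one more unit of codimension by exploiting the fundamental-group content of the macroscopic scalar curvature hypothesis, which is the only input beyond unit-ball volumes not already used in Guth's theorem.

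First, I would normalize $\sigma_d$ so that $V^d_1(\sigma_d) \leq \delta_d$, the constant appearing in Theorem~\ref{thm-guth-codim1}. Since for each $x \in M$ the ball $B_1(x) \subset M$ is covered by the projected ball $B_1(\widetilde x) \subset \widetilde{B_2(x)}$, we have $\vol(B_1(x)) \leq \delta_d$ for every $x$. Guth's theorem then produces a continuous map $f : M \to K^{d-1}$ into a $(d-1)$-complex with $\diam f^{-1}(p) \leq 1$ for all $p \in K$. This reduces the problem to eliminating the interior of the top-dimensional cells of $K$ without inflating fiber diameters beyond a dimensional constant.

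Next, I would focus on the preimages of open top-dimensional cells $\sigma \subset K$. Each connected component $T \subset f^{-1}(\sigma)$ is a ``thread'' which, by the fiber bound, is a union of small-diameter slices parametrized by a $1$-dimensional transverse direction in $\sigma$. The key additional input from $\msc \geq \sigma_d$ is that the universal cover $\widetilde{B_2(x)}$ of every ball meeting $T$ has small unit-ball volume. I would try to turn this into the statement that $T$ carries, in a uniform way, a family of short noncontractible loops in the universal covers of the local balls containing it: otherwise the universal cover would contain a ``straight $\mathbf{R}$-factor'' direction at unit scale, producing unit balls of nonsmall volume and contradicting the macroscopic bound. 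I would then assemble these local loops, by a partition-of-unity or nerve-refinement argument, into a controlled homotopy collapsing each thread $T$ onto $K^{(d-2)}$, producing a modified map $\widetilde f : M \to K^{(d-2)}$ with $\diam \widetilde f^{-1}(p) \leq C_d$.

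The main obstacle, which I anticipate to be severe, is the assembly step: the short loops produced in the universal covers of different $2$-balls need not be compatible, because the covers themselves vary with the basepoint and the natural maps between them need not match the local $\pi_1$-classes. Even if each piece of a thread admits a local collapsing homotopy through a small loop, these homotopies can fail to patch globally when $\pi_1$ of overlapping $2$-balls maps inconsistently into $\pi_1(M)$. A successful attack would have to produce some global coherent object, perhaps a single intermediate cover sitting between $M$ and its universal cover that records only the ``essentially short'' loops at scale $1$, together with a systolic-type bound relating thread length to the length of these loops; it is precisely at this gluing step that one should worry whether the conjecture as stated is even true, and one would want to look for potential counterexamples built out of circle-bundle constructions before investing further effort into patching.
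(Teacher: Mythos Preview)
Your closing instinct is exactly right, and it is fatal to the whole program: the conjecture is \emph{false} in every dimension $d \geq 4$, so no proof strategy can succeed. The paper to which this statement belongs is devoted precisely to constructing the circle-bundle counterexamples you anticipate in your last sentence. Concretely, one builds closed $(d-1)$-manifolds $M_n$ that are $\mathrm{Pin}^-$, have $\mathbf{Z}_2$-hypersphericity $\gtrsim n$, and admit a $3/2$-small ruling by embedded $2$-spheres tamed by a class $\omega \in H^2(M_n;\mathbf{Z})$. The associated circle bundle $E_n \to M_n$ with $c_1 = \omega$, equipped with an adapted metric of fiber radius $\rho \ll 1$, then has (i) $\msc \geq \sigma$ for any prescribed $\sigma$ once $\rho$ is small enough, because over every $3/2$-ball the bundle restricts to the Hopf fibration over a ruling sphere, so the circle fiber dies in $\pi_1$ of the local universal cover and the unit-ball volume there is $O(\rho)$; and (ii) $\mathrm{UW}_{d-2}(E_n) \gtrsim n$, by a mod-$2$ Hopf-invariant refinement of the fiber contraction lemma that uses the $\mathrm{Pin}^-$ hypothesis on the base.

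This pinpoints where your assembly step breaks. In these examples the short ``local loops'' you hope to find are exactly the circle fibers, and they \emph{are} compatibly defined across all $2$-balls---they sit in a global circle bundle. The problem is not incoherence of the loops but that they are null-homotopic in each $\widetilde{B_2(x)}$ (by the ruling), so they give you nothing to collapse along; meanwhile the bundle still has large codimension-two width because any putative map $E_n \to K^{d-2}$ would force the composite $E_n \to S^{d-1}$ to be null-homotopic, contradicting nonvanishing of the generalized mod-$2$ Hopf invariant. In short, the macroscopic scalar curvature hypothesis controls $\pi_1$ of local balls but says nothing about the secondary (Hopf-type) invariants that obstruct collapsing to codimension two, and the counterexamples are engineered so that this secondary obstruction survives.
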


In contrast with Conjecture \ref{conj-gro}, this conjecture has not yet been established even in dimension $3$. However, partial results in dimension $3$ are known for a version of Conjecture \ref{conj-abgtrue} in \cite[Conjecture 3]{abg24}, wherein the authors impose a control on the volume of balls on the manifold as well as an additional local acyclicity control condition, i.e., that every loop in a unit ball bounds a $2$-chain supported in a concentric ball of slightly larger radius. See, \cite[Theorem 6]{abg24}. 

\subsection{Main results} The primary purpose of this article is to construct counterexamples to Conjecture \ref{conj-abgtrue}. Our main theorem is as follows.

\begin{theorema}[Theorem \ref{thm-final}]\label{thm-mainres}
    Let $d \geq 4$. For any $\sigma > 0$ and positive integer $n$, there are $d$-dimensional closed Riemannian manifolds $E_n$ satisfying $\msc \geq \sigma$ and $\mathrm{UW}_{d-2}(E_n) \gtrsim n$. Therefore, Conjecture \ref{conj-abgtrue} is false in dimensions $4$ and above.
\end{theorema}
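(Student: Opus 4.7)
The plan is to construct $E_n$ as the total space of a principal circle bundle $\pi_n : E_n \to B_n$ over a $(d-1)$-dimensional closed Riemannian manifold $B_n$, endowed with the connection metric $g_\varepsilon = \pi_n^* g_{B_n} + \varepsilon^2 \theta \otimes \theta$ for a connection 1-form $\theta$ and a small parameter $\varepsilon \ll 1$. I want two features of $B_n$: (i) its hypersphericity radius should grow linearly in $n$, and (ii) $B_n$ should be equipped with a \emph{ruling}, that is, a covering by 2-spheres of diameter uniformly bounded in $n$, on each of which the bundle $\pi_n$ restricts non-trivially.

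For the lower bound $\uw{d-2}(E_n) \gtrsim n$, I would invoke the novel codimension two Urysohn width estimate for circle bundles alluded to in the abstract, which asserts that for a principal $S^1$-bundle $\pi : E \to B^{d-1}$ with non-zero Euler class, $\uw{d-2}(E)$ is bounded below (up to a dimensional constant) by the hypersphericity radius of $B$. Heuristically, hypersphericity provides a $1$-Lipschitz degree one map $B \to S^{d-1}(n)$, and fiber contraction obstructs routing this degree through the $(d-2)$-complex target of a small-fiber map from $E$. With $B_n$ as above, this immediately gives $\uw{d-2}(E_n) \gtrsim n$.

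For positive macroscopic scalar curvature, the metric $g_\varepsilon$ gives $\vol(B_1(x)) \lesssim \varepsilon$ directly by the fibration structure, and I also need to bound unit ball volumes in the universal cover $\widetilde{B_2(x)}$ of $B_2(x) \subset E_n$. Here the ruling is the crucial ingredient: any generator of $\pi_1(B_2(x))$ is, up to homotopy in $E_n$, an $S^1$-fiber lying over a point of some ruling sphere $\Sigma \subset B_n$ of diameter $O(1)$. Since $\pi_n|_\Sigma$ has non-zero Euler class, $\pi_n^{-1}(\Sigma)$ is a lens space in which this fiber bounds a disk of diameter $O(1)$. Choosing the ruling constant appropriately, every loop in $B_2(x)$ bounds inside a concentric ball of slightly larger radius; hence the local fundamental group is controlled, and unit balls in $\widetilde{B_2(x)}$ still have volume $\lesssim \varepsilon$. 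Taking $\varepsilon$ sufficiently small then forces $\msc(E_n) \geq \sigma$ uniformly in $n$.

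The main obstacle, and the conceptual heart of the construction, is producing the base $B_n$ itself: hypersphericity demands that $B_n$ be ``large'' in a top-dimensional sense, whereas the ruling demands a fine covering by small 2-spheres, and these two requirements pull against each other. A naive product like $S^2 \times N^{d-3}_n$ fails because smashing the $S^2$-factor collapses any hyperspherical target to bounded radius. The resolution is to allow topologically non-trivial families of 2-spheres in the definition of ruling, and to realize $B_n$ as a twisted $S^2$-bundle (or via iterated Hopf-type constructions) over a large base, so that the ruling spheres stay small while the top cohomology of $B_n$ still detects a sphere of radius $\sim n$. The fact that the resulting $E_n$ witness discontinuity of $\uw{d-2}$ under Cheeger--Gromov collapsing as $\varepsilon \to 0$ would then be a natural byproduct of this $S^1$-fiber collapsing construction.
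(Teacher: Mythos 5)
Your high-level architecture matches the paper's: take $E_n$ to be a circle bundle over a $(d-1)$-manifold $B_n$ with large hypersphericity and a small ruling, use fiber contraction plus a bundle-theoretic obstruction for the width lower bound, and use the ruling to control local fundamental groups for the macroscopic scalar curvature bound. However, there is a genuine gap at the heart of the argument: the width estimate you invoke is false as you state it. It is \emph{not} true that a principal circle bundle with non-zero Euler class over a base of integer hypersphericity $\gtrsim n$ has $\uw{d-2}(E) \gtrsim n$. The paper itself exhibits counterexamples (Proposition \ref{prop-pscbundles} and Remark \ref{rem-pscbundles}): unit normal circle bundles of Donaldson divisors $Z_n \subset T^4\times S^2$ have non-zero Euler class and $\mathrm{HS}(Z_n;\mathbf{Z})\gtrsim n$, yet $\mathrm{UW}_3(E_n)\le C$ uniformly. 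The correct statement (Theorem \ref{cor-fcl3}) requires the base to admit a $\mathrm{Pin}^-$ structure and replaces hypersphericity by $\mathbf{Z}_2$-hypersphericity; its proof is not a routine fiber-contraction argument but goes through a generalized mod-$2$ Hopf invariant, computed as a Steenrod square on the Thom space of the bundle, with the $\mathrm{Pin}^-$ condition entering via Wu's formula. Your proposal never mentions either hypothesis, and without them the lower bound on $\uw{d-2}(E_n)$ simply does not follow.

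Two further points. First, for the macroscopic scalar curvature step, ``the bundle restricts non-trivially on each ruling sphere'' is too weak: if $\langle c_1,[\Sigma]\rangle = k$ with $|k|\ge 2$, then $\pi^{-1}(\Sigma)$ is a lens space $L(k,1)$ in which the circle fiber generates $\pi_1\cong \mathbf{Z}_k\ne 0$, so it does \emph{not} bound a disk. You need $\langle c_1,[\Sigma]\rangle=\pm 1$ (the paper's taming condition), so that $\pi^{-1}(\Sigma)\cong S^3$ and the fiber is null-homotopic. Second, your base construction is only gestured at: you correctly observe that $S^2\times N_n^{d-3}$ destroys hypersphericity, but the proposed fix via ``twisted $S^2$-bundles or iterated Hopf-type constructions'' is not carried out and must additionally produce a $\mathrm{Pin}^-$ base with large $\mathbf{Z}_2$-hypersphericity. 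The paper instead connect-sums $N_n$ with copies of $S^2\times S^{d-3}$ at a $\delta$-dense net of points (crushing the summands gives a degree-one map back to $N_n$, preserving $\mathbf{Z}_2$-hypersphericity, while the small summands supply the ruling spheres), or uses a Schwarz-type equidistant hypersurface in a $5$-torus; both are verified to satisfy all three hypotheses simultaneously.
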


The manifolds $E_n$ we construct are total spaces of principal circle bundles
$$S^1(\varepsilon) \to E_n \stackrel{\pi}{\to} M_n,$$
where $M_n^{d-1}$ are certain Riemannian manifolds with $\mathrm{UW}_{d-2}(M_n) \gtrsim n$. The metric on $E_n$ is
$$g_{E_n} = \pi^* g_{M_n} + \varepsilon^2 \theta \otimes \theta,$$
for some auxiliary choice of connection $\theta$ on $\pi$. We call this metric an \emph{adapted metric} (cf. Definition \ref{def-adapted}). By choosing $\varepsilon >0$ arbitrarily small, we may ensure that volumes of unit balls in $E_n$ are arbitrarily small as well, as in Example \ref{eg-badegs}. However, we observe that:
\begin{enumerate}
    \item\label{item-uw} $\mathrm{UW}_{d-2}(M_n)$ being large does not imply $\mathrm{UW}_{d-2}(E_n)$ is large, even if $\varepsilon$ is small.
    \item\label{item-msc} Volumes of unit balls in $E_n$ being small does not imply $\msc_{E_n}$ is large.
\end{enumerate}
The second observation is precisely the content of Example \ref{eg-badegs} and the discussion following it, which illustrates that the definition of macroscopic scalar curvature is tailored to exclude these examples. 

\begin{remark}\label{rem-cgcollapse}
    The first observation might come across as surprising since, as $\varepsilon \to 0$, $E_n$ converges to $M_n$ in the Cheeger-Gromov sense. Therefore, one might expect $\mathrm{UW}_{d-2}(E_n) \to \mathrm{UW}_{d-2}(M_n)$, as $\varepsilon \to 0$. However, this is not always true! In \cite{pscbundle}, we had constructed examples of circle bundles over enlargeable manifolds such that the universal cover of their total spaces admit an equivariant map to a simplicial complex of codimension at least two, of uniformly bounded diameter fibers. Passing to large finite sheeted covers, it is possible to ensure the total space has uniformly bounded width and the circle fibers have arbitrarily small radius, yet the base manifold has arbitrarily large width. See, Proposition \ref{prop-pscbundles}.
\end{remark}

One of the key results in this article, used to estimate $\mathrm{UW}_{d-2}(E_n)$, is a lower bound on the codimension 2 Urysohn width of total spaces of principal circle bundles equipped with adapted metrics, in terms of the $\ZZ_2$-hypersphericity (denoted by $\mathrm{HS}(\cdot;\ZZ_2)$; cf. Definition \ref{def-hyp}) of the base manifold, provided the base manifold admits a $\mathrm{Pin}^-$ structure. In precise terms, we prove the following. 

\begin{theorema}[Theorem \ref{cor-fcl3}]\label{thm-uwestimate}
    Let $M^{d-1}$ be a closed Riemannian manifold which admits a $\mathrm{Pin}^-$ structure. Let $\pi : E^d \to M^{d-1}$ be a principal circle bundle over $M$. We equip $E$ with any adapted metric (cf. Definition \ref{def-adapted}). Then, $$\uw{d-2}(E) \geq \frac12 \mathrm{HS}(M; \mathbf{Z}_2).$$
\end{theorema}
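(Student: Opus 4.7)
The plan is to argue by contradiction. Suppose $\mathrm{UW}_{d-2}(E) < R$ for some $R$ with $2R < \mathrm{HS}(M; \mathbf{Z}_2)$; I will derive an impossibility. The Urysohn width assumption yields a continuous $f : E \to K$ to a simplicial complex $K$ of dimension at most $d-2$ with every fiber of diameter less than $R$, while the hypersphericity hypothesis provides a $1$-Lipschitz map $h : M \to S^{d-1}(2R)$ of nonzero $\mathbf{Z}_2$-degree.

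The first step is to contract the composite $\tilde h := h \circ \pi : E \to S^{d-1}(2R)$ along the fibers of $f$. Because the adapted metric makes $\pi$ a Riemannian submersion, $\pi$ is $1$-Lipschitz, hence so is $\tilde h$. For each $p \in K$, the image $\tilde h(f^{-1}(p))$ has diameter less than $R < \pi R$, and therefore lies inside a strongly geodesically convex ball of $S^{d-1}(2R)$ (open balls of radius at most $\pi R$ in a sphere of radius $2R$ sit inside an open hemisphere and are strongly convex). A partition of unity on $K$ together with a continuous choice of such centers produces a map $\bar h : K \to S^{d-1}(2R)$ and a straight-line geodesic homotopy $\tilde h \simeq \bar h \circ f$. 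Since $\dim K \leq d-2$ and $S^{d-1}$ is $(d-2)$-connected, $\bar h$ is nullhomotopic, forcing $\tilde h = h \circ \pi$ to be nullhomotopic as a map $E \to S^{d-1}(2R)$.

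The crucial remaining step is to reach a contradiction by showing that $h \circ \pi$ is in fact essential. The $\mathrm{Pin}^-$ structure on $M$ lifts canonically to one on $E$: the vertical tangent bundle of $\pi$ is oriented and canonically trivialized by the $S^1$-action, so $w_i(E) = \pi^* w_i(M)$, and the relation $w_2 + w_1^2 = 0$ pulls back from $M$ to $E$. Using this $\mathrm{Pin}^-$ structure on $E$, one assigns to $\tilde h$ a mod-$2$ Pontryagin--Thom invariant, namely the $\mathrm{Pin}^-$ bordism class of the regular preimage $\tilde h^{-1}(q) = \pi^{-1}(h^{-1}(q))$ viewed as a framed $\mathrm{Pin}^-$ $1$-submanifold of $E$. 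This preimage decomposes as a disjoint union of $|h^{-1}(q)|$ fibers of $\pi$, and the nonzero $\mathbf{Z}_2$-degree of $h$ forces $|h^{-1}(q)|$ to be odd.

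The main obstacle is verifying the nonvanishing of this invariant: concretely, showing that the $\mathrm{Pin}^-$ structure inherited on each fiber $\pi^{-1}(x) \subset E$ combines with the odd parity of $|h^{-1}(q)|$ to produce the generator of $\Omega_1^{\mathrm{Pin}^-} = \mathbf{Z}_2$ (or, more generally, a nontrivial class in the appropriate framed $\mathrm{Pin}^-$ stable-homotopy group of $S^{d-1}$). This requires carefully tracking how the Pin$^-$ framing on $M$ transports along the circle bundle and is the technical heart of the argument. Once established, the nontriviality of the invariant contradicts the nullhomotopy obtained in the contraction step, completing the proof.
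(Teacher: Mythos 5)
Your first step (contracting $h\circ\pi$ along the fibers of $f$ and concluding that a small $(d-2)$-width would force $h\circ\pi$ to be nullhomotopic) is exactly the paper's use of the fiber contraction lemma, and it is fine. The problem is the second step. You correctly identify that everything reduces to showing $h\circ\pi : E \to S^{d-1}$ is essential, and you propose to detect this by the Pontryagin--Thom invariant of the framed $1$-manifold $\pi^{-1}(h^{-1}(q))$ in $\Omega_1^{\mathrm{Pin}^-}\cong\mathbf{Z}_2$. But the one claim that carries all the weight --- that the induced $\mathrm{Pin}^-$ structure on the fiber circles, together with the odd cardinality of $h^{-1}(q)$, yields the \emph{generator} of $\Omega_1^{\mathrm{Pin}^-}$ --- is asserted rather than proved, and you yourself flag it as ``the technical heart of the argument.'' That claim is not true as stated without further input: the $\mathrm{Pin}^-$ structure a normally framed circle inherits depends on the normal framing coming from $dh$, and hence on the mod~$2$ Euler class of the circle bundle, not merely on the existence of a $\mathrm{Pin}^-$ structure upstairs. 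For the trivial bundle $E = M\times S^1$ with the product framing, each fiber is $\mathrm{Pin}^-$-nullbordant and your invariant vanishes, even though $h\circ\pi$ is essential there (it is detected by the primary obstruction, since $(h\circ\pi)^*\neq 0$ on $H^{d-1}(\,\cdot\,;\mathbf{Z}_2)$). So your argument as written needs, at minimum, the case split the paper makes (primary obstruction nonzero versus zero), and in the second case a genuine computation showing that the nontriviality of the Euler class mod~$2$ twists the framing into the nonbounding (Lie) structure --- the Hopf fiber in $S^3$ bounds a disk, and only the Hopf framing, not the disk framing, makes it the generator.

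For comparison, the paper does this computation cohomologically: when $f^*=0$ it forms the mapping cone $C_f$, defines a generalized mod-$2$ Hopf invariant $\mathrm{hop}(f)=\mathrm{Sq}^2(\eta_f)$, and pulls it back to the mapping cone of $p$, which is the Thom space of the associated disk bundle. There the Thom isomorphism, the Cartan formula, and the Thom--Wu formula $\mathrm{Sq}^2(u)=\Phi(w_2)$ reduce the nonvanishing to the statement $\xi\smile w_2 = q^*(\text{generator})\neq 0$, with the $\mathrm{Pin}^-$ hypothesis entering only to kill the term $\pi^*\mathrm{Sq}^2(\xi)\smile u$ via Wu's formula. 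Your bordism-theoretic reformulation is a legitimate alternative language for the same secondary obstruction, but until you actually carry out the transport-of-framing computation (which will have to reproduce the role of $w_2$ of the disk bundle and of the Wu class $v_2(M)$), the proof has a genuine gap precisely at the point where the $\mathrm{Pin}^-$ hypothesis and the bundle's Euler class must interact.
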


$\mathrm{Pin}^-$ structures are a non-orientable variant of spin structures: an orientable $\mathrm{Pin}^-$ manifold is spin. This result is optimal in the sense that the conclusion is false if either the $\mathrm{Pin}^-$ assumption is dropped, or $\ZZ_2$-hypersphericity is replaced by $\ZZ$-hypersphericity. See Remark \ref{rem-pscbundles} for an explicit example. Theorem \ref{thm-uwestimate} is a sophisticated version of the width estimate in Example \ref{eg-badegs}, (\ref{eg-badeg2}). We prove it by combining the fiber contraction lemma (Lemma \ref{lem-fcl}) with obstructions coming from the generalized mod 2 Hopf invariant (Definition \ref{def-hopf}).

\begin{remark}
We find the appearance of the $\mathrm{Pin}^-$ condition above rather intriguing. The relationship between scalar curvature and spin structures, mediated by the Dirac operator, is quite well known. But it is generally believed to be an appendage of using Dirac operator techniques. However, in Theorem \ref{thm-uwestimate}, the $\mathrm{Pin}^-$ condition appears as a necessary condition in a result concerning Urysohn width, without any input whatsoever from spin geometry.
\end{remark}

It is known that any Kahler manifold $(X, \omega, g)$ with $\mathrm{Sc}_g > 0$ is uniruled, i.e., there is a rational curve passing through any point of $X$ \cite{hwruling}. The converse is also conjectured to be true. Motivated by the discussion in \cite[pg.~10]{grom}, we introduce a soft notion of ruling for Riemannian manifolds (cf. Definition \ref{def-ruling}): 

\begin{definition}
    A \emph{$\lambda$-small ruling} $\mathfrak{S} = \{\Sigma_x\}$ of a Riemannian manifold $M$ is a collection of smoothly embedded $2$-spheres $\Sigma_x \cong S^2$ passing through each point of $M$, such that $\diam(\Sigma_x) \leq \lambda$, and there exists $\omega \in H^2(M; \mathbf{Z})$ such that $\langle \omega, [\Sigma_x]\rangle = \pm 1$. We say $\omega$ \emph{tames} $\mathfrak{S}$.
\end{definition}

Certainly, a uniruling is an example of a ruling in this sense. Since $\omega \in H^2(M;\ZZ)$, there is a principal circle bundle $\pi : E \to M$ naturally associated with $\omega$, satisfying $c_1(E) = \omega$. Furthermore, since $\langle \omega, [\Sigma_x]\rangle = \pm 1$, the restriction of $E$ over any ruling sphere is the Hopf fibration. We call $E$ the circle bundle \emph{associated} to the ruling. Our second key result in this article, used to ensure $E_n$ has uniformly positive macroscopic scalar curvature, is the following. 

\begin{theorema}[Theorem \ref{thm-circbdlmscal}]\label{thm-mscabundle}
    Let $M$ be a closed Riemannian manifold that admits a $3/2$-small ruling. For any $\sigma > 0$, there exists a small enough fiber radius $\varepsilon > 0$ such that the total space $E$ of the circle bundle $S^1({\varepsilon}) \to E \to M$ associated with the ruling, equipped with the adapted metric (cf. Definition \ref{def-adapted}) satisfies $\msc \geq \sigma$.
\end{theorema}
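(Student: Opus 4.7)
My plan is to show that for any $\sigma > 0$, if $\varepsilon > 0$ is sufficiently small, then at every point $e \in E$ the volume of $B_1(\tilde e)$ in the universal cover $\widetilde{B_2(e)}$ of $B_2(e)$ is at most $V^d_1(\sigma)$, which by Remark~\ref{rem-mscprop}(1) is equivalent to $\msc_E(e) \geq \sigma$. The crux is that, in contrast to Example~\ref{eg-badegs}, the small ruling will force the circle fibers of $\pi$ to remain circles (of length $2\pi\varepsilon$) inside $\widetilde{B_2(e)}$ rather than unwind to lines.

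\textbf{Step 1: Fibers are nullhomotopic locally.} Fix $e \in E$ with $x = \pi(e)$, and let $y \in B_1(x) \subset M$. I would show that the fiber $S^1_y := \pi^{-1}(y)$ bounds a disk in $B_2(e)$ by an explicit construction. Pick a path $\gamma$ in $M$ from $y$ to $x$ of length $\leq 1$, and horizontally lift $\gamma$ starting at every point of $S^1_y$ to sweep out an annulus $A \subset E$ with boundary $S^1_y \sqcup (-S^1_x)$. Then cap off $S^1_x$ by a disk $D$ inside the Hopf $3$-sphere $\pi^{-1}(\Sigma_x) \cong S^3$; such a $D$ exists since $S^3$ is simply connected. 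Because horizontal lifts preserve length, $A \subset B_{1 + \pi\varepsilon}(e)$, and because $\diam(\Sigma_x) \leq 3/2$, $D \subset \pi^{-1}(\Sigma_x) \subset B_{3/2 + \pi\varepsilon}(e)$. For $\varepsilon$ small enough, both are contained in $B_2(e)$, so $A \cup D$ is the desired disk.

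\textbf{Step 2: Lifted Riemannian submersion.} Writing $p : \widetilde{B_2(e)} \to B_2(e)$ and $p_M : \widetilde{B_2(x)} \to B_2(x)$ for the universal covers, I would lift $\pi \circ p$ via basepoints to a map $\hat\pi : \widetilde{B_2(e)} \to \widetilde{B_2(x)}$ with $\hat\pi(\tilde e) = \tilde x$. Since $\pi$ is a Riemannian submersion and the covers are local isometries, $\hat\pi$ is itself a $1$-Lipschitz Riemannian submersion. By Step 1, for $\tilde y \in B_1(\tilde x)$ with $y = p_M(\tilde y)$, the fiber $S^1_y$ is nullhomotopic in $B_2(e)$, so it lifts to a closed circle through every point of $p^{-1}(y)$, and hence $\hat\pi^{-1}(\tilde y)$ is a single circle of length $2\pi\varepsilon$.

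\textbf{Step 3: Volume estimate and conclusion.} Since $\hat\pi$ is $1$-Lipschitz and $\hat\pi(\tilde e) = \tilde x$, one has $\hat\pi(B_1(\tilde e)) \subset B_1(\tilde x)$. The coarea formula for Riemannian submersions gives
\[
\vol(B_1(\tilde e)) \leq \int_{B_1(\tilde x)} \vol\bigl(\hat\pi^{-1}(\tilde y) \cap B_1(\tilde e)\bigr) \, d\tilde y \leq 2\pi\varepsilon \cdot \vol(B_1(\tilde x)).
\]
Since $M$ is closed, $C_M := \sup_{x \in M} \vol(B_1(\tilde x))$ is finite (e.g., bounded by $\vol(M)$ times the number of homotopy classes of loops in $M$ of length $\leq 2$). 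Thus $\vol(B_1(\tilde e)) \leq 2\pi\varepsilon\, C_M$. Choosing $\varepsilon < V^d_1(\sigma)/(2\pi C_M)$ yields $\msc_E(e) \geq \sigma$ for every $e \in E$, proving the theorem.

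The main obstacle is Step 1: the Hopf disk must actually fit inside $B_2(e)$. The $3/2$-bound on ruling diameters is essentially sharp here, since the budget is consumed both by horizontally translating the fiber from $y$ to $x$ (up to distance $1$) and by capping off inside the Hopf $3$-sphere (contributing $3/2 + O(\varepsilon)$), and we need the total to stay below $2$. Once Step 1 is in place, Steps 2 and 3 are essentially formal: the ruling propagates the nullhomotopy uniformly over $B_1(x)$, the universal cover inherits a Riemannian circle bundle structure there, and Fubini along fibers finishes the estimate.
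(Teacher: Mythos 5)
Your overall strategy is the same as the paper's: use the ruling sphere through $\pi(e)$ to cap off the fiber inside a Hopf $3$-sphere of diameter $<2$, conclude that fibers do not unwind in $\widetilde{B_2(e)}$, and then run a coarea/Fubini argument to pick up a factor of $2\pi\varepsilon$. Your Step 1 is correct and matches the paper's Steps 1--2. The gap is in Step 2: from ``$S^1_y$ is nullhomotopic in $B_2(e)$'' you conclude that $\hat\pi^{-1}(\tilde y)$ is a \emph{single} circle, but what actually follows is only that each connected component of $p^{-1}(S^1_y)$ is a circle of length $2\pi\varepsilon$. The fiber $\hat\pi^{-1}(\tilde y)$ is a union of such components, one for each element of $\ker\bigl(\pi_*:\pi_1(B_2(e),e)\to\pi_1(B_2(x),x)\bigr)$, and injectivity of this $\pi_*$ is not established. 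The difficulty is that $B_2(e)$ is a metric ball, not a saturated set, so $\pi|_{B_2(e)}$ is not a fibration and you cannot invoke the homotopy exact sequence to identify the kernel with the (now trivial) fiber class; a loop in $B_2(e)$ whose projection dies in $B_2(x)$ could a priori be nontrivial in $B_2(e)$ even though it dies in $\pi^{-1}(B_2(x))$. If several components of $\hat\pi^{-1}(\tilde y)$ meet $B_1(\tilde e)$, your coarea bound degrades by that multiplicity, and it is not clear this multiplicity stays bounded as $\varepsilon\to 0$ (short loops proliferate in the collapsing limit). This is precisely the technical heart of the proof, not a formality.

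The paper circumvents this by never working with $\pi_1$ of a metric ball in $E$. It works with the saturated set $\pi^{-1}(B_{3/2}(x))\supset B_1(e)$, which \emph{is} a restricted circle bundle; the exact sequence plus the Hopf sphere over the ruling sphere through $x$ shows $\pi_*:\pi_1(\pi^{-1}B_{3/2}(x))\to\pi_1(B_{3/2}(x))$ is an isomorphism. It then identifies the component $U$ of $(\pi\circ p)^{-1}B_{3/2}(x)$ containing $\tilde e$ with the pullback of $\pi$ along a covering $V\to B_{3/2}(x)$, via a fiber-product argument. Since $U\to V$ is then an honest circle bundle, every fiber is exactly one circle of length $2\pi\varepsilon$, and coarea over $V$ gives $\vol(B_1(\tilde e))\le 2\pi\varepsilon\cdot\vol(B)$ for $B\subset V$ a unit ball, with $\vol(B)$ bounded via Bishop--Gromov from a Ricci lower bound on $M$. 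To repair your argument you should replace $\hat\pi:\widetilde{B_2(e)}\to\widetilde{B_2(x)}$ with this bundle structure on $U$. A secondary, more minor point: your bound on $C_M$ by ``$\vol(M)$ times the number of homotopy classes of loops in $M$ of length $\le 2$'' undercounts, since $\widetilde{B_2(x)}$ unwinds loops that are nontrivial in $B_2(x)$ but trivial in $M$; the Bishop--Gromov bound, which passes to arbitrary covers, is the safe way to get uniformity in $x$.
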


{From Theorem \ref{thm-uwestimate} and Theorem \ref{thm-mscabundle}, we see that Theorem \ref{thm-mainres} follows, provided the existence of Riemannian manifolds $M_n^{d-1}$ satisfying the following conditions:}
\begin{enumerate}
    \item $M_n$ should admit a $\mathrm{Pin}^-$ structure, and satisfy $\mathrm{HS}(M_n;\ZZ_2) \gtrsim n$.
    \item $M_n$ should admit a $3/2$-small ruling.
\end{enumerate}
We shall provide two separate constructions of such a family of manifolds. 

\begin{enumerate}[topsep=1ex,itemsep=1ex,partopsep=0.5ex,parsep=0.5ex,label=\textbf{Construction \arabic*.}, wide=0pt]
\item \label{eg-1} Let $N^{d-1}$ be any $\mathbf{Z}_2$-enlargeable {(cf. Definition \ref{def-z2en})}, $\mathrm{Pin}^-$ manifold of dimension $d-1 \geq 3$. Then, there exists a cover $N_n \to N$ such that $\mathrm{HS}(N_n; \mathbf{Z}_2) \geq n$. Choose a uniform small constant $\delta > 0$. Let $M_n$ be obtained from $N_n$ by taking a connect sum with $S^2 \times S^{d-3}$ at each point of a maximal $\delta$-separated collection of points on $N_n$. Then $M_n$ admits a (highly singular) small ruling by smoothly embedded $2$-spheres of diameters equal to, upto an error of the order of $\delta$, the diameter of the round $2$-spheres $S^2 \times \{*\}$ that rule $S^2 \times S^{d-3}$, which we may choose to be less than $3/2$. The class $\omega \in H^2(M_n; \mathbf{Z})$ may be chosen to be the Poincare dual of the linear combination of the homology classes $[\{*\} \times S^{d-3}]$, one from each $S^2 \times S^{d-3}$ connect summand. For details, see Section \ref{sec-corrugration}.

\item \label{eg-2} Consider the $4$-manifolds $M_n$ constructed by Balitskiy \cite[Theorem 2.4.4]{balthesis}, elaborated further in \cite[Example 11]{abg24}. These are non-orientable quotients of a $4$-dimensional analogue of the Schwarz minimal surface, given by the locus of points in the Euclidean $5$-space equidistant from the standard $2$-skeleton and its dual. The small ruling on $M_n$ is obtained from realizing it as the boundary of a neighborhood of either one of the skeleta, ruled by the $2$-spheres given by the normal links to the $2$-dimensional cells of the skeleta. It follows from \cite[Theorem 2.4.4]{balthesis} and \cite[Example 11]{abg24} that these manifolds have large hypersphericity. Further examples in higher dimensions are obtained by taking product of $M_n$ with round circles of large radius. For details, see Section \ref{sec-abg}.
\end{enumerate}

These constructions provide counterexamples to Conjecture \ref{conj-abgtrue}, which proves Theorem \ref{thm-mainres}.

\subsection{Comparison with previous work}\label{sec-comparison} We begin by recalling a question of Gromov \cite[pg.~661]{Gro18} on the existence of circle bundles over large manifolds whose total spaces carry positive scalar curvature metrics. We reproduce a paraphrased version below. 

\begin{quest}[Gromov]\label{qn-gro}
Let $M^{d-1}$ be an enlargeable manifold. Let $S^1 \hookrightarrow E^d \xrightarrow{\pi} M^{d-1}$ be a principal circle bundle. Can $E$ admit a positive scalar curvature metric?
\end{quest}

In our earlier work \cite{pscbundle}, we answered Question \ref{qn-gro} in the affirmative, in dimension four and above. Further, we could ensure that the examples of the circle bundles constructed in \cite{pscbundle} have base manifolds that are spin. We now state a refinement of Gromov's question using the notion of enlargeability with $\mathbf{Z}_2$-coefficients. 

\begin{quest}\label{question1}
    Let $M^{d-1}$ be a $\mathbf{Z}_2$-enlargeable manifold (cf. Definition \ref{def-z2en}) which admits a $\mathrm{Pin}^-$ structure. Let $S^1 \hookrightarrow E^d \xrightarrow{\pi} M^{d-1}$ be a principal circle bundle. Can $E$ admit a positive scalar curvature metric?
\end{quest}

The main result of this article gives an affirmative answer to the macroscopic scalar curvature analog of Question \ref{question1}, in dimensions four and above. Our interest in Question \ref{question1} stems from the following observation, which relates Gromov's Question \ref{qn-gro} with Gromov's Conjecture \ref{conj-gro}.  

\begin{obs}
    Conjecture \ref{conj-gro} implies that the answer to Question \ref{question1} is negative.
\end{obs}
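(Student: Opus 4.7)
The plan is a proof by contradiction combining the Urysohn width upper bound from Conjecture~\ref{conj-gro} with the width lower bound from Theorem~\ref{thm-uwestimate}. The essential maneuver is to bridge a hypothetical PSC metric on $E$ with an adapted metric, to which Theorem~\ref{thm-uwestimate} applies, via $S^1$-averaging.

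Assume Conjecture~\ref{conj-gro} holds, and, for contradiction, let $g_E$ be a metric on $E$ with $\mathrm{Sc}_{g_E} \geq \sigma^2 > 0$. Using $\mathbf{Z}_2$-enlargeability of $M$, pick Riemannian covers $p_n : M_n \to M$ with $\mathrm{HS}(M_n; \mathbf{Z}_2) \geq n$; the $\mathrm{Pin}^-$ structure lifts to each $M_n$. Form the pullback circle bundles $\pi_n : E_n := p_n^*E \to M_n$, noting that the natural map $E_n \to E$ is a Riemannian covering, hence a local isometry. The pulled-back metric $g_{E_n}$ on the complete manifold $E_n$ still satisfies $\mathrm{Sc} \geq \sigma^2$, so Conjecture~\ref{conj-gro} yields the uniform bound
$$\mathrm{UW}_{d-2}(E_n, g_{E_n}) \leq C_d/\sigma.$$

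Now form the $S^1$-averaged metric $g_E^{\mathrm{av}} := \int_{S^1}\rho_t^* g_E\, dt$ on the compact $E$, where $\{\rho_t\}$ is the principal action. Compactness yields a bilipschitz estimate $K^{-1} g_E \leq g_E^{\mathrm{av}} \leq K g_E$ for some $K \geq 1$. Since both the $S^1$-action and $g_{E_n}$ are pulled back via the local isometry $E_n \to E$, the pulled-back averages $g_{E_n}^{\mathrm{av}}$ satisfy the \emph{same} bilipschitz estimate, uniformly in $n$. Hence $\mathrm{UW}_{d-2}(E_n, g_{E_n}^{\mathrm{av}}) \leq \sqrt{K}\, C_d/\sigma$. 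Being $S^1$-invariant, $g_E^{\mathrm{av}}$ takes the Kaluza--Klein form $\pi^* h + f^2\, \theta \otimes \theta$ for some smooth $f : M \to (0,\infty)$, base metric $h$, and connection $\theta$, which pull back to analogous data $h_n, f_n, \theta_n$ on each $M_n$. Setting $\varepsilon := \min_M f > 0$, define the adapted metric $g_n^{\mathrm{ad}} := \pi_n^* h_n + \varepsilon^2\, \theta_n \otimes \theta_n$; since $\varepsilon \leq f_n$ pointwise, $g_n^{\mathrm{ad}} \leq g_{E_n}^{\mathrm{av}}$ as quadratic forms, so distances and fiber diameters can only shrink, giving
$$\mathrm{UW}_{d-2}(E_n, g_n^{\mathrm{ad}}) \leq \mathrm{UW}_{d-2}(E_n, g_{E_n}^{\mathrm{av}}) \leq \sqrt{K}\, C_d/\sigma$$
uniformly in $n$.

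Finally, Theorem~\ref{thm-uwestimate} applied to the adapted metric $g_n^{\mathrm{ad}}$ on the principal circle bundle $\pi_n : E_n \to M_n$ over the $\mathrm{Pin}^-$ manifold $M_n$ gives $\mathrm{UW}_{d-2}(E_n, g_n^{\mathrm{ad}}) \geq \tfrac{1}{2}\mathrm{HS}(M_n; \mathbf{Z}_2) \geq n/2$, contradicting the uniform upper bound as $n \to \infty$. The one delicate point is uniformity of the bilipschitz constant of $S^1$-averaging across the tower of covers, but this is automatic because the principal $S^1$-action lifts and the cover maps are local isometries.
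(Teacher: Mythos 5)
Your proposal is correct and follows essentially the same route as the paper: pass to the covers $E_n \to M_n$, compare the hypothetical PSC metric to an adapted metric by a bilipschitz estimate that is uniform in $n$ because the covering maps are local isometries, and play the resulting uniform width bound from Conjecture~\ref{conj-gro} against the lower bound $\mathrm{UW}_{d-2}(E_n) \gtrsim n$ from Theorem~\ref{thm-uwestimate}. The only difference is cosmetic: the paper simply fixes an arbitrary adapted metric on the compact $E$ and invokes the automatic bilipschitz equivalence of any two metrics there, so your $S^1$-averaging and fiber-length truncation, while valid, are not needed.
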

\begin{proof}
Suppose the answer to Question \ref{question1} is positive. Thus, there exists a metric $g^+$ on $E$ with $\mathrm{Sc}(g^+) > 0$. Let $g$ be an adapted metric (cf. Definition \ref{def-adapted}) on $E$. Since $E$ is compact, there exists $\sigma^2, C > 0$ such that $\mathrm{Sc}(g^+) \geq \sigma^2$, and $g$ and $g^+$ are $C$-bilipschitz equivalent. 

Since $M$ is $\mathbf{Z}_2$-enlargeable, it admits finite sheeted covers $M_n \to M$ with $\mathrm{HS}(M_n; \mathbf{Z}_2) \geq n$. Pulling back $\pi$ to $M_n$, we obtain a circle bundle $\pi_n : E_n \to M_n$. Observe that $E_n \to E$ is also a finite sheeted cover. Let $g_{n}^+, g_{n}$ denote the lifts of the metrics $g^+, g$ to $E_n$. Furthermore, $g_n$ is also an adapted metric on $E_n$. Since the covering map is a local isometry, $\mathrm{Sc}(g^+_{n}) \geq \sigma^2$, and $g_{n}^+$ and $g_{n}$ are $C$-bilipschitz equivalent, with the same constants $C, \sigma^2 > 0$. Therefore, $\uw{d-2}(E_n,g^+_n)$ and $\uw{d-2}(E_n,g_n)$ are uniformly comparable for all $n$. However, by Theorem \ref{thm-uwestimate}, $\mathrm{UW}_{d-2}(E_n, g^+_{n}) \gtrsim n$. As $\mathrm{Sc}(g^+_{n}) \geq \sigma^2$, this contradicts Conjecture \ref{conj-gro}. 
\end{proof}

We summarize, to the best of our knowledge, the existing results providing a negative answer to Question \ref{question1} under various conditions on $M$. In these cases, $M$ is orientable; as such, $\mathbf{Z}_2$-enlargeability of $M$ is equivalent to the following: for any $n \gg 1$, there exists a finite cover $M_n \to M$ such that $M_n$ admits a $1$-Lipschitz map of \emph{odd} degree to a sphere of radius $n$.

\begin{enumerate}
    \item A special class of $\mathbf{Z}_2$-enlargeable orientable manifolds are $1$-enlargeable manifolds\footnote{$1$-enlargeable means that the maps to the sphere in the definition of enlargeability are of degree $1$.}. He \cite[Proposition 4.13]{He25} proves that circle bundles with non-nullhomologous fibers over $1$-enlargeable manifolds of dimension $d - 1$ do not admit positive scalar curvature metrics for $d \leq 7$.        
    \item Suppose the fibers of $\pi : E \to M$ are null-homotopic in $E$, and $G := \pi_1E \cong \pi_1 M$ is a torsion-free group of cohomological dimension at most $d-1$, that also satisfies the Strong Novikov Conjecture. Let $E_n$ denote the pullback of $E$ to $M_n$. If $E$ admits a PSC metric, then combining the main result of Bolotov \cite{bol09} with the argument of \cite[Lemma 3.1]{pscbundle}, we get $\uw{d-2}(E_n) < C$ for some uniform $C > 0$. However, Theorem \ref{thm-uwestimate} implies $\uw{d-2}(E_n) \gtrsim n$, which gives a contradiction.
\end{enumerate}  

The examples outlined in Constructions \hyperref[eg-1]{1} and \hyperref[eg-2]{2} provide a sizable class of manifolds satisfying the hypotheses of Question \ref{question1}. In a few cases, Bolotov's result in (2) can be used to rule out the examples from Construction \hyperref[eg-1]{1}. However, if for the initial $\mathbf{Z}_2$-enlargeable manifold $N$ in Construction \hyperref[eg-1]{1}, $\pi_1 N$ has large cohomological dimension or torsion, we do not know the answer to Question \ref{question1}. Furthermore, Section \ref{sec-abg} provides an example of a circle bundle $\pi : E \to M$ over a non-orientable, $\mathbf{Z}_2$-enlargeable, $\mathrm{Pin}^-$ manifold $M$. As shown by Li-Zhang \cite{lz}, $M$ does not admit a positive scalar curvature metric, yet its orientation double cover does. But the answer to Question \ref{question1} for $E$ remains open. 
 
\par\medskip

\noindent {\bf Symplectic analogies.} In \cite{pscbundle}, the examples that answer Question \ref{qn-gro} affirmatively were constructed using symplectic geometry. More precisely, as our base manifolds, we used Donaldson divisors $Z_k \subset (M, \omega)$, Poincar\'{e} dual to $k[\omega]$ for $k \gg 1$, where $(M, \omega)$ is a symplectic manifolds satisfying various hypotheses needed to ensure $Z_k$ is enlargeable. See Proposition \ref{prop-pscbundles} for a summary. For ease of exposition, we call $k$ the \emph{twisting parameter}.

A vivid picture of Donaldson divisors, with large twisting parameter $k$, was shared with us by Dennis Sullivan \cite{dennis}: they look like a very fine mesh inside the symplectic manifold, smoothed near the intersections. As a concrete example, let $\Sigma_g$ denote a surface of genus $g$ equipped with an area form of unit area. We equip $\Sigma_g \times \Sigma_h$ with the product symplectic form. Then, Donaldson divisors $Z_k \subset \Sigma_g \times \Sigma_h$ may be constructed by plumbing several disjoint copies of $\{*\} \times \Sigma_h$ with several disjoint copies of $\Sigma_g \times \{*\}$, at the points of intersection. The number of these slices increase with $k \to \infty$. In general, Donaldson \cite[Proposition 40]{don96} showed that $k^{-1} Z_k$ converges to $\omega$ in the sense of currents. This illustrates that the mesh in Sullivan's picture becomes increasingly finer as the twisting parameter increases.

It is intriguing to us that a similar picture may be used to describe Construction \hyperref[eg-2]{2} where the equidistant hypersurface $M_n$ corresponds to the mesh, and parameter $n$ corresponds to the twisting parameter $k$. Similar considerations apply for the manifolds $M_n$ in Construction \hyperref[eg-1]{1}. For instance, in both the case of Donaldson divisors $Z_k$ and the manifolds $M_n$ in Constructions \hyperref[eg-1]{1} and \hyperref[eg-2]{2}, increasing parameters $k$ and $n$ respectively increases the Euler characteristic (for a short computation, see \cite[Proposition 4]{pscbundle}). However, the process of corrugating a manifold by attaching many trivial codimension $3$ handles at very fine scales as in Construction \hyperref[eg-1]{1} is perhaps more closely reminiscent of softer constructions in symplectic topology involving Gromov's $h$-principle.

\subsection{Organization}
In Section \ref{sec-fclhopf}, we prove a series of lemmas leading up to the proof of Theorem \ref{thm-uwestimate}. We end the section with with Proposition \ref{prop-pscbundles}, showing that Urysohn width is not continuous with respect to Cheeger-Gromov collapsing limits. In Section \ref{sec-strat}, we introduce the notion of ruling for Riemannian manifolds and prove Theorem \ref{thm-mscabundle}. In Section \ref{sec-construction} we provide details of Constructions \hyperref[eg-1]{1} and \hyperref[eg-2]{2} and establish some key properties of the manifolds constructed therein. In Section \ref{sec-main} we combine all these results to establish Theorem \ref{thm-mainres}. Finally, Appendix \ref{sec-appendix} contains some computational details pertaining to Section \ref{sec-abg}.

\subsection*{Acknowledgments}
The authors are grateful to Alexey Balitskiy and Larry Guth for suggesting several ideas and patiently answering their questions at various stages of this work. They are also indebted to Bernhard Hanke and Shihang He for helpful correspondence that led to several key ideas in this article. In particular, He pointed out the article \cite{bol06} to us, which served as the inspiration for Section \ref{sec-fclhopf}. They are also grateful to Mike Miller Eismeier for help with the algebraic topology in Section \ref{sec-fclhopf}, {as well as for suggesting several improvements on an earlier draft}. They are also thankful to Dennis Sullivan for sharing his insights on symplectic geometry, some of which are behind Section \ref{sec-comparison}.

The first author is grateful to Jonathan Rosenberg for several insightful conversations, particularly on Question \ref{question1}. He would also like to thank Or Hershkovits and Boyu Zhang for their interest in this work. The second author is grateful to his advisor Mahan Mj for his interest and several helpful comments on an earlier draft of this article, as well as his friends and colleagues Ritwik Chakraborty and Sekh Kiran Ajij for several helpful conversations. The second author is supported by the Department of Atomic Energy, Government of India, under project no.12-R\&D-TFR-5.01-0500.

\section{Fiber contraction and Hopf invariant}\label{sec-fclhopf} One of the only known techniques to estimate Urysohn width of manifolds from below is the fiber contraction lemma, introduced by Gromov and Guth. In this section, we combine the conclusion of the fiber contraction lemma with certain topological invariants, to produce estimates on the Urysohn width of total spaces of circle bundles in terms of hypersphericity of the base manifolds. Throughout this section, $\mathbf{k}$ will denote an arbitrary commutative unital ring. The reader may wish to specialize to $\mathbf{k} = \mathbf{Z}, \mathbf{Z}_2$. 

\subsection{Fiber contraction lemma} The following result was introduced by Gromov in \cite{gromovwidth}. We quote a generalisation due to Guth in \cite[Lemma 5.2]{guthsurface}.

\begin{lemma}[Fiber contraction lemma]\label{lem-fcl}
    Let $X$ be metric space, $K$ be a simplicial complex, and $M$ be a Riemannian manifold with convexity radius at least $R$.  Suppose we have 
    \begin{enumerate}
        \item A map $f:X \to K$ such that the diameter of any fiber of $f$ is at most $R$.
        \item  A $1$-Lipschitz map $p: X \to M$. 
    \end{enumerate}
Then there is a map $g: K \to M$, such that $p$ is homotopic to $g \circ f$. 
\end{lemma}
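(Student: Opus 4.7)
The plan is to build $g$ on vertices of a sufficiently fine subdivision of $K$, extend skeleton-by-skeleton by geodesic averaging inside convex balls of $M$, and then exhibit the homotopy $p \simeq g \circ f$ by short geodesics inside those same balls. The key input is that for every $k \in K$, the image $p(f^{-1}(k))$ has diameter at most $R$, since $p$ is $1$-Lipschitz and $\mathrm{diam}\,f^{-1}(k) \leq R$. Such a set lies in a geodesic ball of radius $R$ in $M$, which is geodesically convex by the convexity radius hypothesis. By upper semicontinuity of the multi-valued map $k \mapsto f^{-1}(k)$ (valid for continuous $f$ with $X$ compact, which is the setting of the applications) together with a Lebesgue number argument, a sufficient number of barycentric subdivisions of $K$ produces a subdivided complex $K'$ in which every closed simplex $\overline{\sigma}$ has $p(f^{-1}(\overline{\sigma}))$ contained in a single convex geodesic ball $B_\sigma \subset M$.

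I would then define $g$ on vertices of $K'$ by choosing $x_v \in f^{-1}(v)$ (for $v$ not in the image of $f$ pick anything) and setting $g(v) := p(x_v)$, and extend inductively over the skeleta of $K'$: for each simplex $\sigma = [v_0,\dots,v_d]$, all values $g(v_i)$ lie in the common convex ball $B_\sigma$, so $g$ extends over $\sigma$ by iterated geodesic joining (equivalently, Karcher/Riemannian barycenters), yielding a continuous $g : K \to M$ with $g(\sigma) \subset B_\sigma$.

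For the homotopy, observe that for any $x \in X$, if $\sigma$ is the simplex of $K'$ containing $f(x)$, then both $p(x) \in p(f^{-1}(\overline{\sigma})) \subset B_\sigma$ and $g(f(x)) \in B_\sigma$ lie in the same convex ball. Joining them by the unique minimizing geodesic in $B_\sigma$ and moving linearly along this geodesic as the time parameter varies from $0$ to $1$ defines a continuous homotopy $H : X \times [0,1] \to M$ from $p$ to $g \circ f$. The main obstacle is the subdivision step, which essentially requires properness/compactness of $f$ to justify the Lebesgue number bound on how small simplex-preimages must be made; in the intended applications $X$ is a closed Riemannian manifold (or a finite cover), so this is automatic. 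A secondary technical point is making sure the ball radii used stay strictly below the convexity radius so that geodesic barycenters are well-defined and continuous, handled by taking the enlargement factor $\epsilon > 0$ in the subdivision small enough.
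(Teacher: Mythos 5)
The paper does not prove this lemma at all: it is quoted as a known result of Gromov and Guth (the citation is to \cite[Lemma 5.2]{guthsurface}), so there is no in-paper argument to compare against. Your proof is the standard one for this statement — subdivide $K$ so that simplex preimages map into convex balls, define $g$ on vertices by pushing forward chosen fiber points, extend by geodesic coning, and connect $p$ to $g\circ f$ by unique minimizing geodesics — and it is essentially correct, including the correct identification of where compactness of $X$ is needed (upper semicontinuity of $k\mapsto f^{-1}(k)$ plus a Lebesgue number argument to control the subdivision).

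Two small points deserve attention. First, the boundary case: after fattening, $\mathrm{diam}\,f^{-1}(\overline\sigma)$ is only $\leq R+\epsilon$, not $\leq R$, so a ball containing $p(f^{-1}(\overline\sigma))$ and centered at a point of that set has radius $R+\epsilon$ and need not be convex if the convexity radius is exactly $R$. This is why careful statements of the lemma assume the fiber diameters are \emph{strictly} less than the convexity radius (and indeed every application in the paper, e.g.\ Corollary \ref{cor-fcl}, starts from $\mathrm{UW}_{d-1}(X)<R$, so there is always slack); you should either add that strict inequality to your hypotheses or note explicitly that the applications only invoke the strict form. Second, your prescription \enquote{for $v$ not in the image of $f$ pick anything} does not suffice: a simplex of $K'$ disjoint from $f(X)$ has no associated convex ball, so the geodesic extension over it may fail. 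The fix is to replace $K$ by the subcomplex of closed simplices meeting $f(X)$ (which only lowers the dimension and does not affect any application), or to extend $g$ over the remaining simplices by a separate contractibility argument. With these repairs the argument is complete.
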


The following corollary will be used frequently in this article.

\begin{corollary} \label{cor-fcl}
    Let $p: X \to M$ be a $1$-Lipschitz map from a metric space $X$ to a Riemannian manifold $M$ of convexity radius at least $R > 0$. If the induced map $p_* : H_d(X; \mathbf{k}) \to H_d(M; \mathbf{k})$ is non-zero, then $\uw{d-1}(X) \geq R$.\end{corollary}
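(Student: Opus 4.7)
The plan is to argue by contradiction using the fiber contraction lemma together with the dimension of the target complex.

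Suppose that $\uw{d-1}(X) < R$. By the definition of Urysohn width, there exists $t < R$ and a continuous map $f : X \to K$ to a simplicial complex $K$ with $\dim K \leq d - 1$ such that every fiber of $f$ has diameter at most $t$, in particular at most $R$. Since $p : X \to M$ is $1$-Lipschitz and $M$ has convexity radius at least $R$, the hypotheses of Lemma \ref{lem-fcl} are satisfied. Hence there exists a continuous map $g : K \to M$ such that $p$ is homotopic to $g \circ f$.

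Passing to homology with coefficients in $\mathbf{k}$, we obtain the factorization
\[
p_* \;=\; g_* \circ f_* \;:\; H_d(X; \mathbf{k}) \longrightarrow H_d(K; \mathbf{k}) \longrightarrow H_d(M; \mathbf{k}).
\]
But $\dim K \leq d-1$ implies $H_d(K; \mathbf{k}) = 0$, so the composite $g_* \circ f_*$ vanishes. This forces $p_* = 0$ on $H_d(X; \mathbf{k})$, contradicting the hypothesis that $p_*$ is non-zero. Therefore $\uw{d-1}(X) \geq R$.

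The argument is essentially immediate once one has Lemma \ref{lem-fcl} in hand; there is no real obstacle. The only minor subtlety is matching the strict inequality $\uw{d-1}(X) < R$ coming from the infimum in the definition of Urysohn width with the non-strict hypothesis ``diameter of any fiber is at most $R$'' required by the fiber contraction lemma, which is handled by choosing an intermediate value $t < R$.
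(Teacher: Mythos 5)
Your proof is correct and follows essentially the same route as the paper: contradiction via the definition of Urysohn width, the fiber contraction lemma to factor $p$ through $K$ up to homotopy, and the vanishing of $H_d(K;\mathbf{k})$ from $\dim K \leq d-1$. The remark about reconciling the strict infimum with the non-strict fiber-diameter hypothesis is a fair observation that the paper elides, but it does not change the argument.
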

\begin{proof}
    Suppose otherwise that $\uw{d-1}(X) < R$. Then, by definition of Urysohn width, there is a continuous map $f: X \to K$ to a simplicial complex $K$ with $\dim K \leq d-1$ such that the diameter of every fiber of $f$ is at most $R$. By Lemma \ref{lem-fcl}, $p$ is homotopic to $g \circ f$ for some map $g: K \to M$. Therefore, $p_* = (g \circ f)_* = g_* \circ f_* : H_{d}(X; \mathbf{k}) \to H_d(M; \mathbf{k})$. But $f_* \equiv 0$, since $H_d(K; \mathbf{k})=0$ due to $\dim K \leq d-1$. Therefore, $g_* \circ f_* \equiv 0$. However, $p_*$ is not the zero map by assumption. Contradiction. Hence, $\uw{d-1}(X) \geq R$.     
\end{proof}

For a closed manifold $M$ of dimension $d$ and a map $f : M \to S^d$, we shall say $f$ has non-zero $\mathbf{k}$-degree if $f_* : H_d(M; \mathbf{k}) \to H_d(S^d; \mathbf{k}) \cong \mathbf{k}$ is non-zero, i.e. $f_*[M] \neq 0 \in \mathbf{k}$. The following definition is a generalization of the notion of hypersphericity \cite[Definition A]{gldirac} to non-integer coefficients. 

\begin{definition}[$\mathbf{k}$-hypersphericity] \label{def-hyp}
Let $M^d$ be a closed Riemannian manifold. Then the hypersphericity of $M$ is the supremal radius $R > 0$, such that $M^d$ admits a $1$-Lipschitz map of non-zero $\mathbf{k}$-degree onto a sphere $S^d(R)$ of radius $R$. We shall denote it by $\mathrm{HS}(M; \mathbf{k})$.
\end{definition}

We record the following corollary of Lemma \ref{cor-fcl}, due to Gromov \cite[Proposition $\mathrm{F}_1$]{gromovwidth}.

\begin{corollary}\label{cor-widthhyps}
    Let $M^d$ be a Riemannian manifold. Then $\uw{d-1}(M) \geq 1/2 \cdot \mathrm{HS}(M; \mathbf{k})$.
\end{corollary}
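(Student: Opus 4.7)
The plan is to combine the definition of $\mathbf{k}$-hypersphericity directly with Corollary \ref{cor-fcl}. I would fix any $R < \mathrm{HS}(M; \mathbf{k})$. By the definition of $\mathbf{k}$-hypersphericity, there exists a $1$-Lipschitz map $f : M \to S^d(R)$ of non-zero $\mathbf{k}$-degree; equivalently, the induced map $f_* : H_d(M; \mathbf{k}) \to H_d(S^d(R); \mathbf{k}) \cong \mathbf{k}$ is non-zero.

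Next, I would recall that the round sphere $S^d(R)$ has convexity radius $\pi R/2$, which is in particular greater than $R/2$. Applying Corollary \ref{cor-fcl} to the $1$-Lipschitz map $p = f : M \to S^d(R)$, taking $X = M$ and the target Riemannian manifold to be $S^d(R)$, the hypothesis that $p_*$ is non-zero on top-dimensional homology is precisely the non-zero $\mathbf{k}$-degree condition. The corollary therefore yields $\uw{d-1}(M) \geq R/2$.

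Finally, taking the supremum over $R < \mathrm{HS}(M; \mathbf{k})$ gives the claimed bound $\uw{d-1}(M) \geq \tfrac{1}{2}\, \mathrm{HS}(M; \mathbf{k})$. There is no serious obstacle in this argument; the entire content lies in recognizing that a hypersphericity certificate supplies exactly the data required by the fiber contraction lemma applied to the sphere as target. I would remark in passing that running the same argument with the sharp convexity radius of $S^d(R)$ would yield the stronger constant $\pi/2$ in place of $1/2$, although the weaker form stated in the corollary suffices for every subsequent application.
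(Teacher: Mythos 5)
Your proof is correct and follows exactly the paper's route: extract a $1$-Lipschitz map of non-zero $\mathbf{k}$-degree to $S^d(R)$ from the definition of hypersphericity, feed it into Corollary \ref{cor-fcl} using a lower bound on the convexity radius of the round sphere, and take the supremum over $R$. The paper simply uses the (weaker) bound that $S^d(R)$ has convexity radius at least $R/2$, which is where the constant $1/2$ comes from; your parenthetical about the sharp constant is a harmless aside.
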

\begin{proof}
    Follows from Corollary \ref{cor-fcl} together with the observation that a sphere of radius $R > 0$ has convexity radius $R/2$.
\end{proof}

\subsection{Generalized Hopf invariant} In this section we prove an improvement of Corollary \ref{cor-fcl} by relaxing the hypothesis to allow $1$-Lipschitz maps to large Riemannian manifolds that are zero on top-dimensional homology of the target. To this end, we introduce a notion of mod-$2$ Hopf invariant for maps to spheres. This definition was inspired by \cite[Definition 2.1]{linmme}. First, we begin with some preliminaries. 

Let $X$ be a CW-complex and $f : X \to S^d$ is a continuous map. Let $C_f := CX \cup_f S^d$ denote the mapping cone of $f$. Suppose the induced map on cohomology $f^* : H^d(S^d; \mathbf{k}) \to H^d(X; \mathbf{k})$ is zero. Then, from the cohomology long exact sequence for the pair $(C_f, S^d)$, it follows that for degree $i>0$,
$$H^i(C_f; \mathbf{k}) \cong H^{i-1}(X; \mathbf{k}) \oplus H^i(S^d; \mathbf{k}).$$
Therefore, we have 
    \[ 
    H^i(C_f; \mathbf{k}) \cong
    \begin{cases}
        H^{i-1}(X; \mathbf{k}), & i \neq d\\
        H^{d-1}(X; \mathbf{k}) \oplus H^d(S^d; \mathbf{k}), & i  = d
    \end{cases}
    \]
Let $\eta_f := (0, 1) \in H^d(C_f; \mathbf{k})$ where $1 \in \mathbf{k} \cong H^d(S^d; \mathbf{k})$ is the multiplicative unit.

\begin{definition}\label{def-hopf}
    Let $X$ be a metric space, $f : X \to S^d$ be a continuous map such that $f$ induces the zero map in $\mathbf{Z}_2$-homology. By universal coefficient theorem, $$f^* : \mathbf{Z}_2 \cong H^d(S^d; \mathbf{Z}_2) \to H^d(X; \mathbf{Z}_2),$$
    is zero. Let $C_f$ denote the mapping cone of $f$, and $\eta_f \in H^d(C_f; \mathbf{Z}_2)$ be as defined above. We define the \emph{generalized mod-2 Hopf invariant} of $f$ as the Steenrod square of $\eta_f$:
    $$\mathrm{hop}(f) := \mathrm{Sq}^2(\eta_f) \in H^{d+2}(C_f; \mathbf{Z}_2) \cong  H^{d+1}(X; \mathbf{Z}_2)$$
\end{definition}

\begin{remark}
    If $f$ is nullhomotopic, then $\mathrm{hop}(f)=0$. This is because $C_f \simeq \Sigma X \vee S^d$ and therefore $\mathrm{Sq}^2(\eta_f) \in H^{d+2}(S^d;\ZZ_2) = 0$.
\end{remark}

The following result is an improvement of Corollary \ref{cor-fcl} with mod 2 coefficients.

\begin{corollary} \label{cor-fcl2}
    Let $X$ be a metric space. Let $p : X \to M^d$ be a $1$-Lipschitz map to a Riemannian manifold $M^d$ of convexity radius at least $R > 0$. 
    Let $q : M^d \to S^d$ be a map with non-zero $\mathbf{Z}_2$-degree. Let $f := q \circ p : X \to S^d$. Suppose, either
    \begin{enumerate}
        \item $f_* : H_d(X; \mathbf{Z}_2) \to H_d(S^d; \mathbf{Z}_2)$ is nonzero, or
        \item $f_*$ is zero but $\mathrm{hop}(f) \neq 0$.
    \end{enumerate}
    Then $\mathrm{UW}_{d-1}(X) \geq R$.
\end{corollary}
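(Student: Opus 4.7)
The plan is to argue by contradiction. Suppose $\mathrm{UW}_{d-1}(X) < R$. By definition of Urysohn width, there is a continuous map $\phi : X \to K$ to a simplicial complex with $\dim K \leq d - 1$ and every fiber of diameter at most $R$. Applying the fiber contraction lemma (Lemma \ref{lem-fcl}) to the $1$-Lipschitz map $p$, I obtain a map $g : K \to M$ with $p \simeq g \circ \phi$. Composing with $q$ gives $f \simeq (q \circ g) \circ \phi$, so $f$ factors up to homotopy through the $(d-1)$-dimensional complex $K$.

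The crux is then the observation that, since $S^d$ is $(d-1)$-connected, \emph{every} continuous map from a CW complex of dimension $\leq d - 1$ to $S^d$ is nullhomotopic. This can be seen either via obstruction theory (the obstructions to a nullhomotopy lie in $H^{i}(K; \pi_{i-1}(S^d))$ for $1 \leq i \leq d$, and $\pi_j(S^d) = 0$ for $j \leq d - 1$), or by simplicial approximation (the image of $K$ can be pushed off some point $* \in S^d$, after which it lies in the contractible space $S^d \setminus \{*\}$). In particular, $q \circ g$ is nullhomotopic, and hence so is $f$.

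It remains to derive a contradiction with each of the hypotheses (1) and (2). A nullhomotopic map induces the zero homomorphism on homology in every degree, directly contradicting (1). For (2), the mapping cone of a nullhomotopic map $f$ splits up to homotopy as $C_f \simeq \Sigma X \vee S^d$, so $\eta_f \in H^d(C_f; \mathbf{Z}_2)$ is pulled back from the $S^d$ summand; then $\mathrm{hop}(f) = \mathrm{Sq}^2(\eta_f)$ lies in $H^{d+2}(S^d; \mathbf{Z}_2) = 0$, so $\mathrm{hop}(f) = 0$, contradicting (2). In either case the assumption $\mathrm{UW}_{d-1}(X) < R$ is untenable, which gives $\mathrm{UW}_{d-1}(X) \geq R$. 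I do not anticipate any serious obstacle: the only nontrivial input beyond Lemma \ref{lem-fcl} is the $(d-1)$-connectedness of $S^d$, which forces any factorization of $f$ through a $(d-1)$-complex to be null, and both invariants in (1) and (2) are homotopy invariants that vanish on nullhomotopic maps.
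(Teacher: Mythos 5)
Your proposal is correct and follows essentially the same route as the paper: contradiction via the fiber contraction lemma, factoring $f$ up to homotopy through the $(d-1)$-complex $K$, concluding $f$ is nullhomotopic, and killing $\mathrm{hop}(f)$ via the splitting $C_f \simeq \Sigma X \vee S^d$. The only cosmetic difference is that the paper dispatches case (1) by citing Corollary \ref{cor-fcl} directly, whereas you fold it into the same nullhomotopy contradiction; both are fine.
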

\begin{proof}
    If $f_*$ is nonzero, then $p_* : H_d(X; \mathbf{Z}_2) \to H_d(M; \mathbf{Z}_2)$ is also nonzero. Thus, the result follows from Corollary \ref{cor-fcl}. Therefore, suppose $f_*$ is zero and $\mathrm{hop}(f) \neq 0$. Assume $\mathrm{UW}_{d-1}(X) < R$. Then, there exists a continuous map $g : X \to K$ to some simplicial complex $K$ with $\dim K \leq d - 1$. By Lemma \ref{lem-fcl}, we have $p \simeq h \circ g$ for some map $h : K \to M$. Therefore, $f \simeq q \circ h \circ g$. By the cellular approximation theorem, $q \circ h : K \to S^d$ is nullhomotopic as $\dim K \leq d-1$. Thus, $f$ is nullhomotopic as well. This leads to a contradiction, as $\mathrm{hop}(f) \neq 0$.
    Thus, $\mathrm{UW}_{d-1}(X) \geq R$.
\end{proof}

\subsection{Hopf invariant for circle bundles} In this section, we show that circle bundles with non-zero Euler class mod $2$, also have non-zero Hopf invariant mod $2$, provided that the base manifold admits a $\mathrm{Pin}^{-}$ structure. The arguments in this section are a synthesis of the arguments in the proof of \cite[Theorem 2.1]{bol06} (see also, \cite[Lemma 3.2]{pscbundle}).

\begin{definition}
    Let $M$ be a smooth manifold. Let $w_1, w_2$ denote the first and second Stiefel-Whitney classes of the tangent bundle of $M$, respectively. We say $M$ admits a $\mathrm{Pin}^-$ structure, if the second Wu class $v_2 := w_2 + w_1^2 \in H^2(M; \mathbf{Z}_2)$ is zero.
\end{definition}
\begin{remark} \label{rem-sqvanish} 
    {Let $M^d$ be a $d$-dimensional manifold. By Wu's formula, $\sq2 : H^{d-2}(M;\ZZ_2) \to H^d(M; \ZZ_2)$ is given by $\sq2(x) = v_2(M) \smile x$. Thus, if $M$ admits a $\mathrm{Pin}^-$ structure, then $\sq2 = 0$ on $H^{d-2}(M; \ZZ_2)$.}
\end{remark}
A large class of manifolds that admit $\mathrm{Pin}^-$ structures are provided by hypersurfaces in spin manifolds.
\begin{lemma}\label{lem-pinhyp}
    Let $N$ be a spin manifold, and $M \subset N$ be a (not necessarily orientable) hypersurface. Then $M$ admits a $\mathrm{Pin}^-$ structure. 
\end{lemma}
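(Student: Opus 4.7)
The plan is to verify the criterion from the definition, namely that the second Wu class $v_2(M) = w_2(M) + w_1(M)^2 \in H^2(M; \ZZ_2)$ vanishes, by exploiting the splitting of $TN|_M$ as a direct sum of $TM$ with a line bundle.

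First I would set up notation: let $\nu \to M$ denote the normal line bundle of the embedding $M \hookrightarrow N$, and observe the standard isomorphism $TN|_M \cong TM \oplus \nu$ (this exists regardless of orientability, since $\nu$ may be taken to be a tubular neighborhood's normal bundle). Since $N$ is spin, we have $w_1(N) = 0$ and $w_2(N) = 0$, and these restrict to zero on $M$, giving $w_1(TN|_M) = w_2(TN|_M) = 0$.

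Next I would apply the Whitney sum formula. From $w_1(TM) + w_1(\nu) = 0$ in $H^1(M; \ZZ_2)$ we obtain the key identification $w_1(\nu) = w_1(M)$. Then from the degree two part,
\[
0 = w_2(TN|_M) = w_2(TM) + w_1(TM)\smile w_1(\nu) + w_2(\nu).
\]
Since $\nu$ is a line bundle, $w_2(\nu) = 0$, and substituting $w_1(\nu) = w_1(M)$ yields $w_2(M) + w_1(M)^2 = 0$, i.e.\ $v_2(M) = 0$. By the definition given, $M$ admits a $\mathrm{Pin}^-$ structure.

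This argument is essentially a formal manipulation with Stiefel--Whitney classes, so there is no genuine obstacle; the only subtlety worth flagging is that the normal bundle $\nu$ need not be trivial when $M$ is non-orientable, but this is precisely what makes the Wu class (rather than $w_2$ alone) the natural invariant to land on --- the term $w_1(M)^2$ is exactly accounting for the nontriviality of $\nu$. No input from spin geometry beyond the vanishing of $w_1, w_2$ of $N$ is needed.
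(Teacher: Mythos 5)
Your proof is correct and follows exactly the same route as the paper's: restrict $w_1, w_2$ of the spin manifold $N$ to $M$, split $TN|_M \cong TM \oplus \nu$, apply the Whitney formula to get $w_1(\nu) = w_1(TM)$ and $w_2(TM) + w_1(TM)\smile w_1(\nu) = 0$, and conclude $v_2(M) = 0$. No differences worth noting.
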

\begin{proof}
    Recall the second Wu class of $M$, given by $v_2(TM) = w_2(TM) + w_1(TM)^2$.  We will show that $v_2(TM)=0$. Consider the inclusion map $i : M \to N$. Since $N$ is spin, we have that $w_1(TN)=0$ and $w_2(TN) = 0$. Therefore, the pullbacks $i^*w_2$ and $i^*w_1$ are also zero. Since $TN|_{M} \cong TM \oplus NM$, and $M$ is a hypersurface, the normal bundle $NM$ has real rank $1$. Therefore, $w_2(NM)=0$. Combining these facts and applying the Whitney formula, we get 
    \begin{align*}
        0 &= i^*w_1(TN) = w_1(TN|_{M}) = w_1(T{M}) + w_1(NM) \implies w_1(TM)=w_1(NM) \\
    0 &= i^*w_2(TN) = w_2(TN|_{M}) = w_2(TM) + w_1(TM) \smile w_1(NM) 
    \end{align*}
    Combining these two we get 
    $v_2(TM) = w_2(TM) + w_1^2(TM) = 0,$
    as desired.
\end{proof}
We now come to the main result of this section. We begin with an elementary lemma.
\begin{lemma}\label{lem-conethom}
    Let $p : E \to M$ be a circle bundle. Let $\pi : \mathbb{D}(E) \to M$ be the disk bundle corresponding to $E$, such that $E = \mathbb{S}(E)$ is the unit circle bundle, and $\pi|_E = p$. Let $\mathrm{Th}(E) = \mathbb{D}(E)/\mathbb{S}(E)$ denote the Thom space of $\pi$. Let $C_p$ denote the mapping cone of $p : E \to M$. Then $C_p \cong \mathrm{Th}(E)$.
\end{lemma}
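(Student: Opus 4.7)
The plan is to realize both sides as the same quotient of $E\times [0,1]\sqcup M$ by identifying the disk bundle with the mapping cylinder of $p$.

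First I will describe $\mathbb{D}(E)$ as an associated bundle. Since $p : E \to M$ is a principal $S^1$-bundle, the disk bundle is $\mathbb{D}(E) = E \times_{S^1} \mathbb{D}^2$, where $S^1$ acts on $\mathbb{D}^2 \subset \mathbf{C}$ by rotation. Parametrizing $\mathbb{D}^2$ in polar coordinates as $\{re^{i\theta}: 0 \le r \le 1\}$, every equivalence class $[e, re^{i\theta}]$ in $E \times_{S^1} \mathbb{D}^2$ contains, via the $S^1$-action, a representative of the form $[e', r]$ with $e' = e \cdot e^{i\theta} \in E$ and $r \in [0,1]$. This representative is unique when $r > 0$ (the $S^1$-action on $\mathbb{D}^2 \setminus \{0\}$ is free), while for $r = 0$ any two points $[e, 0]$ and $[e', 0]$ are identified whenever $p(e) = p(e')$.

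This gives a homeomorphism
\[
\mathbb{D}(E) \;\cong\; \bigl(E \times [0,1]\bigr) \sqcup M \;\big/\; (e, 0) \sim p(e),
\]
which is precisely the mapping cylinder $M_p$ of $p : E \to M$. Under this identification, the unit circle subbundle $\mathbb{S}(E) \subset \mathbb{D}(E)$, corresponding to $r = 1$, goes to $E \times \{1\} \subset M_p$.

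Finally I would collapse to obtain the Thom space: by definition
\[
\mathrm{Th}(E) \;=\; \mathbb{D}(E)/\mathbb{S}(E) \;\cong\; M_p \big/ (E \times \{1\}).
\]
Collapsing $E \times \{1\}$ in $M_p$ turns $E \times [0,1]$ into the cone $CE = (E \times [0,1])/(E \times \{1\})$, so the resulting space is $CE \cup_p M = C_p$, the mapping cone of $p$. Thus $C_p \cong \mathrm{Th}(E)$. There is no real obstacle here beyond bookkeeping with the polar parametrization; the only point requiring any care is verifying that the quotient map identifying $\mathbb{D}(E)$ with $M_p$ is a homeomorphism, which follows because both are compact Hausdorff spaces and the map is a continuous bijection.
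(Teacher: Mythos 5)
Your proof is correct and follows essentially the same route as the paper: both identify $\mathbb{D}(E)$ with the mapping cylinder of $p$ via fiberwise radial scaling (your polar-coordinate description of $E \times_{S^1} \mathbb{D}^2$ is just the associated-bundle phrasing of the paper's map $(x,r) \mapsto r \cdot x$) and then collapse $\mathbb{S}(E) = E \times \{1\}$ to pass from the cylinder to the cone and from the disk bundle to the Thom space.
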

\begin{proof}
    Note that the mapping cone $C_p$ is obtained from the mapping cylinder $$\mathrm{Cyl}_p = E \times [0, 1] \bigsqcup_{(x, 0) \sim p(x)} M$$
    by quotienting $E \times \{1\}$. For a point $z = (m, v) \in \mathbb{D}(E)$ with $\pi(z) = m$, we denote $r \cdot z = (m, r \cdot v) \in \mathbb{D}(E)$ for $r \in [0, 1]$. Let 
    $\Phi : \mathrm{Cyl}_p \to \mathbb{D}(E),$
    be defined by $\phi(x, r) = r \cdot x$ on $E \times [0, 1] \subset \mathrm{Cyl}_p$ and $\phi(m) = p(m)$ on $M \subset \mathrm{Cyl}_p$. Then $\phi$ establishes a homeomorphism such that $\phi(E \times \{1\}) = \mathbb{S}(E)$. Therefore, we get $$C_p = \mathrm{Cyl}_p/(E \times \{1\}) \cong \mathbb{D}(E)/\mathbb{S}(E) = \mathrm{Th}(E),$$
    as required.
\end{proof}

\begin{prop}\label{prop-hopfcircle}
    Let $M^{d}$ be a closed manifold that admits a $\mathrm{Pin}^-$ structure. Let $p : E^{d+1} \to M^{d}$ be a principal circle bundle over $M$, and $q : M^{d} \to S^{d}$ be a map with non-zero $\mathbf{Z}_2$-degree. Let $f := q \circ p : E^{d+1} \to S^{d}$. If $f^* : H^d(S^d; \mathbf{Z}_2) \to H^d(E; \mathbf{Z}_2)$ is zero, then $\mathrm{hop}(f) \neq 0$.
\end{prop}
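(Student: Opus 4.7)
The plan is to pull $\mathrm{hop}(f) = \mathrm{Sq}^2(\eta_f)$ back along a natural map $\bar{q} : C_p \to C_f$ and compute it there via the Thom isomorphism. By Lemma \ref{lem-conethom}, $C_p$ is homeomorphic to the Thom space $\mathrm{Th}(E)$ of the associated disk bundle $\pi : \mathbb{D}(E) \to M$, which is a rank-$2$ orientable real vector bundle (being the underlying real bundle of a complex line bundle). Because $f = q \circ p$, there is a well-defined continuous map $\bar{q} : C_p \to C_f$ that equals $q$ on $M \subset C_p$ and the identity on the cone $CE \subset C_p$; naturality of the Steenrod squares then gives $\bar{q}^* \mathrm{hop}(f) = \mathrm{Sq}^2(\bar{q}^* \eta_f)$, so it suffices to show this class is nonzero in $H^{d+2}(\mathrm{Th}(E); \mathbf{Z}_2)$.

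First I would identify $\bar{q}^* \eta_f$ under the mod-$2$ Thom isomorphism $\Phi : H^{d-2}(M; \mathbf{Z}_2) \xrightarrow{\cong} H^d(\mathrm{Th}(E); \mathbf{Z}_2)$, $\Phi(y) = U \smile \pi^* y$, where $U$ denotes the Thom class. Writing $\bar{q}^* \eta_f = \Phi(x)$ for a unique $x \in H^{d-2}(M; \mathbf{Z}_2)$ and composing with the zero section $\iota : M \hookrightarrow \mathrm{Th}(E)$ (noting that $\bar{q} \circ \iota = j \circ q$, where $j : S^d \hookrightarrow C_f$ is the inclusion, and that $\iota^* U = w_2(E) \in H^2(M; \mathbf{Z}_2)$ is the mod-$2$ Euler class), I obtain
\begin{equation*}
w_2(E) \smile x \;=\; \iota^* \Phi(x) \;=\; \iota^* \bar{q}^* \eta_f \;=\; q^* j^* \eta_f \;=\; \mu_M,
\end{equation*}
where $\mu_M$ denotes the generator of $H^d(M; \mathbf{Z}_2) \cong \mathbf{Z}_2$; the last equality uses that $q$ has nonzero $\mathbf{Z}_2$-degree.

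Next I would expand $\mathrm{Sq}^2(\Phi(x)) = \mathrm{Sq}^2(U \smile \pi^* x)$ using the Cartan formula together with the Wu identities for the Thom class, $\mathrm{Sq}^1(U) = w_1(E) \smile U = 0$ (by orientability of the bundle) and $\mathrm{Sq}^2(U) = w_2(E) \smile U$. The middle Cartan term vanishes, giving
\begin{equation*}
\mathrm{Sq}^2 \bigl( U \smile \pi^* x \bigr) \;=\; U \smile \pi^* \bigl(w_2(E) \smile x + \mathrm{Sq}^2(x)\bigr) \;=\; \Phi\bigl(w_2(E) \smile x + \mathrm{Sq}^2(x)\bigr).
\end{equation*}
The $\mathrm{Pin}^-$ hypothesis then enters via Remark \ref{rem-sqvanish}, which gives $\mathrm{Sq}^2 = 0$ on $H^{d-2}(M; \mathbf{Z}_2)$ and hence $\mathrm{Sq}^2(x) = 0$. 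Combined with $w_2(E) \smile x = \mu_M$ from the previous step, this yields $\mathrm{Sq}^2(\bar{q}^* \eta_f) = \Phi(\mu_M)$, which is the generator of $H^{d+2}(\mathrm{Th}(E); \mathbf{Z}_2) \cong \mathbf{Z}_2$ under the Thom isomorphism. By naturality, $\bar{q}^* \mathrm{hop}(f) \neq 0$, and so $\mathrm{hop}(f) \neq 0$.

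The step I expect to require the most care is the setup of the naturality square identifying $\bar{q}^* \eta_f$ with $\Phi(x)$ for a class $x$ on $M$, and verifying that the mod-$2$ Wu formulas $\iota^* U = w_2(E)$ and $\mathrm{Sq}^i(U) = w_i(E) \smile U$ apply uniformly even when $M$ is potentially nonorientable, since here it is orientability of the rank-$2$ bundle $\mathbb{D}(E)$ rather than of the base that is needed. After that, the computation reduces to a short Cartan-formula manipulation, with the $\mathrm{Pin}^-$ condition supplying the crucial vanishing of $\mathrm{Sq}^2$ on $H^{d-2}(M;\mathbf{Z}_2)$.
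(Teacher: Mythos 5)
Your proposal is correct and follows essentially the same route as the paper: pull back $\eta_f$ along $\bar q : C_p \to C_f$, identify $C_p$ with the Thom space via Lemma \ref{lem-conethom}, use the zero section and $s^*u = w_2(E)$ to show the Thom-isomorphism preimage $x$ satisfies $w_2(E)\smile x \neq 0$, and then apply the Cartan formula with the Thom--Wu identities and the $\mathrm{Pin}^-$ vanishing of $\mathrm{Sq}^2$ on $H^{d-2}(M;\mathbf{Z}_2)$. The only differences are cosmetic (order of factors in the Thom isomorphism, and your explicit identification of $q^*j^*\eta_f$ with the generator $\mu_M$).
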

\begin{proof}
    Note that $q$ defines a map $\overline{q} : C_p \to C_f$. By naturality of Steenrod squares, $$\overline{q}^* \mathrm{Sq}^2(\eta_f) = \mathrm{Sq}^2(\overline{q}^* \eta_f).$$
    From Lemma \ref{lem-conethom}, we have $C_p \cong \mathrm{Th}(E)$ is the Thom space of the disk bundle $\pi : \mathbb{D}(E) \to M$ corresponding to the circle bundle $p : E \to M$. Let $\alpha := \overline{q}^* \eta_f \in H^d(\mathrm{Th}(E); \mathbf{Z}_2)$. It suffices to show $\mathrm{Sq}^2(\alpha) \neq 0$. Let us denote $u \in H^2(\mathrm{Th}(E); \mathbf{Z}_2)$ to be the Thom class. Let
 \begin{gather*}
    \Phi : H^i(M; \mathbf{Z}_2) \to H^{i+2}(\mathrm{Th}(E); \mathbf{Z}_2) \cong {H^{i+2}(\mathbb{D}(E), \mathbb{S}(E); \mathbf{Z}_2)},\\ 
    \Phi(x) = \pi^*x \smile u,
    \end{gather*}
    denote the Thom isomorphism. Let $\xi \in H^{d-2}(M; \mathbf{Z}_2)$ be such that $\Phi(\xi) = \alpha$.
    
    Let $s : M \to \mathrm{Th}(E)$ denote the zero section. Let $w_i$, $i = 1, 2$, denote the first and second Stiefel-Whitney classes of the bundle $\pi : \mathbb{D}(E) \to M$. Note that $s^*u = w_2$. Thus,
    \begin{align*}
        \xi \smile w_2 = s^*\pi^*\xi \smile s^*u = s^*(\pi^*\xi \smile u) = s^*\Phi(\xi) = s^*\alpha = s^*\overline{q}^*\eta_f 
        &= (\overline{q} \circ s)^* \eta_f \\
        &= q^* \eta_f \\ &\neq 0,
    \end{align*}
    since $q : M \to S^d$ has nonzero $\mathbf{Z}_2$-degree. Now, since $M$ admits a $\mathrm{Pin}^-$ structure, we have $\sq2 = 0$ on $H^*(M; \ZZ_2)$ by Remark \ref{rem-sqvanish}. In particular, we have $\sq2(\xi) =0$. This along with Cartan's formula gives, \begin{align*}\mathrm{Sq}^2(\alpha) = \mathrm{Sq}^2(\Phi(\xi)) &= \mathrm{Sq}^2(\pi^*\xi \smile u)\\ &= \pi^*\mathrm{Sq}^2(\xi) \smile u + \pi^*\mathrm{Sq}^1(\xi) \smile \mathrm{Sq}^1(u) + \pi^*\xi \smile \mathrm{Sq}^2(u) \\
    &= \pi^*\mathrm{Sq}^1(\xi) \smile \mathrm{Sq}^1(u) + \pi^*\xi \smile \mathrm{Sq}^2(u)
    \end{align*}
    By the Thom-Wu formula, $\mathrm{Sq}^1(u) = \Phi(w_1)$ and $\mathrm{Sq}^2(u) = \Phi(w_2)$. Since $w_1 = 0$ by orientability of the circle bundle, $\mathrm{Sq}^1(u) = 0$. Therefore, the above reduces to $$\mathrm{Sq}^2(\alpha) = \pi^*\xi \smile \Phi(w_2) = \pi^*\xi \smile \pi^* w_2 \smile u = \pi^*(\xi \smile w_2) \smile u = \Phi(\xi \smile w_2)$$
    But we saw earlier that $\xi \smile w_2 \neq 0$. Since $\Phi$ is an isomorphism, this implies $\sq 2 (\alpha) \neq 0$. Therefore, $\mathrm{hop}(f) \neq 0$, as desired.
\end{proof}
\begin{remark}
    We note that Proposition \ref{prop-hopfcircle} is not true if the $\mathrm{Pin}^-$ hypothesis on $M$ is dropped. Indeed, consider the five dimensional Hopf bundle $p : S^5 \to \mathbf{CP}^2$ and let $q : \mathbf{CP}^2 \to S^4$ be the degree $1$ map obtained from crushing $\mathbf{CP}^1 \subset \mathbf{CP}^2$. Then, $q \circ p$ is the attaching map of the $6$-cell in $\mathbf{CP}^3/\mathbf{CP}^1 \simeq S^4 \vee S^6$. Therefore, it is null-homotopic, and in particular has trivial mod $2$ Hopf invariant.
\end{remark}

\subsection{Width of circle bundles} The main result of this section is that codimension $2$ Urysohn width of total spaces of circle bundles over $\mathrm{Pin}^-$ manifolds equipped with $U(1)$-invariant metrics, is bounded from below by the hypersphericity of the base manifold. This can be considered a codimension-$2$ variant of Corollary \ref{cor-widthhyps}. We start with the following terminology.

\begin{definition}[Adapted metric] \label{def-adapted}
    Let $(M,g_M)$ be a Riemannian manifold and $p : E \to M$ be a principal circle bundle. Let $r > 0$ be any real number, and $\theta \in \Omega^1(E; \mathfrak{u}(1))$ be any $U(1)$-invariant connection $1$-form on $E$. The \emph{adapted metric} on $E$ with parameters $(r, \theta)$ will be defined as the Riemannian metric $g_E = p^* g_M + r^2~\theta \otimes\theta$. 
\end{definition}

Note that $p : (E, g_E) \to (M, g_M)$ is a Riemannian submersion by definition. We now come to the main result of this section.

\begin{theorem}\label{cor-fcl3}
  Let $M^{d-1}$ be a closed Riemannian manifold which admits a $\mathrm{Pin}^-$ structure. Let $p : E \to M$ be a principal circle bundle over $M$. We equip $E$ with any adapted metric. Then, $\uw{d-2}(E) \geq 1/2 \cdot \mathrm{HS}(M; \mathbf{Z}_2)$.
\end{theorem}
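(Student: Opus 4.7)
The plan is to derive this as a direct application of Corollary \ref{cor-fcl2}, fed by the Hopf invariant computation of Proposition \ref{prop-hopfcircle}. Fix any $R < \mathrm{HS}(M;\mathbf{Z}_2)$. By definition of $\mathbf{Z}_2$-hypersphericity, there is a $1$-Lipschitz map $q : M \to S^{d-1}(R)$ of non-zero $\mathbf{Z}_2$-degree. Since the adapted metric on $E$ (Definition \ref{def-adapted}) makes $p : E \to M$ a Riemannian submersion, $p$ is $1$-Lipschitz, and so is the composition $f := q \circ p : E \to S^{d-1}(R)$. The sphere $S^{d-1}(R)$ has convexity radius $R/2$.

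Next, I would verify the hypothesis of either part (1) or part (2) of Corollary \ref{cor-fcl2}, applied with $S^{d-1}(R)$ playing the role of the intermediate Riemannian target (and the identity on $S^{d-1}(R)$ playing the role of the map ``$q$'' in that corollary). The argument is a clean dichotomy. If $f_* : H_{d-1}(E;\mathbf{Z}_2) \to H_{d-1}(S^{d-1};\mathbf{Z}_2)$ is non-zero, we are in case (1). Otherwise, since $\mathbf{Z}_2$ is a field, the universal coefficient theorem forces $f^* : H^{d-1}(S^{d-1};\mathbf{Z}_2) \to H^{d-1}(E;\mathbf{Z}_2)$ to also vanish, which is exactly the hypothesis of Proposition \ref{prop-hopfcircle} (applied with the parameter $d$ in that proposition replaced by $d-1$). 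Its conclusion $\mathrm{hop}(f) \neq 0$ places us in case (2). In either situation, Corollary \ref{cor-fcl2} yields $\mathrm{UW}_{d-2}(E) \geq R/2$; sending $R \to \mathrm{HS}(M;\mathbf{Z}_2)$ finishes the argument.

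The main conceptual obstacle has already been absorbed into Proposition \ref{prop-hopfcircle}: the $\mathrm{Pin}^-$ hypothesis enters only through its invocation there, where it forces $\mathrm{Sq}^2 = 0$ on $H^{d-3}(M;\mathbf{Z}_2)$ via Wu's formula, and this is precisely what produces a non-trivial Hopf invariant in the otherwise unfavorable regime $f^* = 0$. With that input in hand the remainder is routine bookkeeping: the geometric content is supplied by the fiber contraction lemma through Corollary \ref{cor-fcl2}, $\mathbf{Z}_2$-hypersphericity furnishes a $1$-Lipschitz map to a sphere of controlled convexity radius, and the two combine without any further subtlety.
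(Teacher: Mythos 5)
Your proposal is correct and follows essentially the same route as the paper: compose the Lipschitz sphere map from the hypersphericity hypothesis with the Riemannian submersion $p$, then run the dichotomy of Corollary \ref{cor-fcl2} using Proposition \ref{prop-hopfcircle} to supply the non-vanishing Hopf invariant when $f^*=0$. The only differences are cosmetic (your radius normalization $R$ versus the paper's $2R$, and your explicit universal-coefficient remark linking $f_*=0$ to $f^*=0$, which the paper handles in Definition \ref{def-hopf}).
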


\begin{proof}
    Suppose there exists a $1$-Lipschitz map $q : M \to S^{d-1}(2R)$ to a sphere of radius $2R$, such that $q$ has nonzero $\mathbf{Z}_2$-degree. Let $f := q \circ p : E^{d} \to S^{d-1}(2R)$. As $p$ is a Riemannian submersion, it is $1$-Lipschitz. Therefore, the composition $f$ is also $1$-Lipschitz. 
    
    Note that the convexity radius of $S^{d-1}(2R)$ is at least $R$. If the induced homomorphism $f^* : H^{d-1}(S^{d-1}; \mathbf{Z}_2) \to H^{d-1}(E; \mathbf{Z}_2)$ is zero, then by Proposition \ref{prop-hopfcircle}, we get $\mathrm{hop}(f) \neq 0$, since $q$ has non-zero $\mathbf{Z}_2$-degree. Thus, either $f^* \neq 0$ or $f^* = 0$ and $\mathrm{hop}(f) \neq 0$. In either case, by Corollary \ref{cor-fcl2}, we obtain $\uw{d-2}(E) \geq R$. Taking a supremum over $R$, we obtain $\mathrm{UW}_{d-2}(E) \geq 1/2 \cdot \mathrm{HS}(M; \mathbf{Z}_2)$, as required.
\end{proof}

The results in the following section are not used in the rest of the article. However, it provides an instructive contrast to the results discussed {above}.

\subsection{Urysohn width and Cheeger-Gromov collapsing} In the following proposition, we revisit a particular class of examples of `small' circle bundles over `large' manifolds provided in our earlier work \cite{pscbundle} which demonstrate that Urysohn width is not continuous under Cheeger-Gromov collapsing limits. As a consequence, we shall obtain that the statement of Theorem \ref{cor-fcl3} does not hold if the $\mathrm{Pin}^-$ hypothesis is removed, or if the $\mathbf{Z}_2$-hypersphericity hypothesis is replaced by $\mathbf{Z}$-hypersphericity. 

\begin{prop}\label{prop-pscbundles}
    Urysohn width is not continuous with respect to Cheeger-Gromov collapsing limits. In particular, it is not continuous under Gromov-Hausdorff convergence.
\end{prop}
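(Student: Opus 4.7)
The plan is to exhibit a family of Riemannian manifolds built from our earlier work \cite{pscbundle} whose Urysohn widths remain uniformly bounded while the widths of their Cheeger-Gromov collapsing limits grow without bound. Recall from \cite{pscbundle} that there exists a principal circle bundle $\pi : E \to M$ over a closed enlargeable Riemannian manifold $M^{d-1}$ such that the universal cover $\widetilde{E}$ admits a $\pi_1(E)$-equivariant continuous map $F : \widetilde{E} \to \widetilde{K}$ to a simplicial complex $\widetilde{K}$ of dimension at most $d-2$, whose fibers have uniformly bounded diameter, say bounded above by some constant $C > 0$.

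By enlargeability of $M$, there exist finite-sheeted covers $p_n : M_n \to M$ together with $1$-Lipschitz maps of non-zero degree onto the round sphere $S^{d-1}(n)$. Corollary \ref{cor-widthhyps} then gives $\uw{d-2}(M_n) \geq n/2$. Pulling the bundle back along $p_n$ yields a principal circle bundle $\pi_n : E_n \to M_n$, and $E_n \to E$ is itself a finite-sheeted cover. The $\pi_1(E)$-equivariance of $F$ restricts to $\pi_1(E_n)$-equivariance, so $F$ descends to a continuous map $F_n : E_n \to K_n := \widetilde{K}/\pi_1(E_n)$, where $\dim K_n \leq d-2$ and the fibers of $F_n$ retain the diameter bound $C$. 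In particular, $\uw{d-2}(E_n) \leq C$ uniformly in $n$.

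Next, equip $E_n$ with a one-parameter family of adapted metrics $g_\varepsilon = \pi_n^* g_{M_n} + \varepsilon^2 \theta \otimes \theta$ (cf. Definition \ref{def-adapted}), where $\theta$ is a fixed $U(1)$-connection one-form on $\pi_n$. As $\varepsilon \to 0$, the circle fibers of $\pi_n$ collapse uniformly, and $(E_n, g_\varepsilon)$ converges to $(M_n, g_{M_n})$ in the Cheeger-Gromov (and in particular Gromov-Hausdorff) sense. Since shrinking $\varepsilon$ only decreases distances along the vertical directions while preserving horizontal distances, the diameter bound on the fibers of $F_n$ persists for every $\varepsilon > 0$, giving $\uw{d-2}(E_n, g_\varepsilon) \leq C$ uniformly in $\varepsilon$.

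To conclude, fix any $n_0 > 2C$ and choose a sequence $\varepsilon_k \to 0$. The sequence $(E_{n_0}, g_{\varepsilon_k})$ Cheeger-Gromov collapses to $(M_{n_0}, g_{M_{n_0}})$, yet $\uw{d-2}(E_{n_0}, g_{\varepsilon_k}) \leq C < n_0/2 \leq \uw{d-2}(M_{n_0}, g_{M_{n_0}})$ for every $k$. This gap establishes the desired discontinuity of Urysohn width under Cheeger-Gromov collapse (and a fortiori under Gromov-Hausdorff convergence). The main subtlety is verifying that the universal-cover equivariant map $F$ from \cite{pscbundle} produces a fiber-diameter bound that survives both the passage to finite covers $E_n$ and the variation of the fiber radius $\varepsilon$; this is precisely the content of the $\pi_1(E)$-equivariance together with the Riemannian submersion structure of the adapted metric.
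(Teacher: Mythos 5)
Your proposal is correct and follows essentially the same route as the paper: both take the circle bundles over enlargeable base manifolds from \cite{pscbundle}, use the equivariant codimension-two map with uniformly bounded fibers to get $\uw{d-2}(E_n) \leq C$ independently of the cover and of the fiber radius $\varepsilon$ (via monotonicity of the adapted metric in $\varepsilon$), and contrast this with the lower bound $\uw{d-2}(M_n) \gtrsim n$ on the collapsed limit coming from the fiber contraction lemma. The paper merely instantiates this with the concrete Donaldson divisor $Z \subset T^4 \times S^2$ and the covers corresponding to $(n\mathbf{Z})^4 \leq \mathbf{Z}^4$, but the argument is the same.
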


\begin{proof}
      Let us recall the construction from \cite{pscbundle}. For simplicity, consider the symplectic $6$-manifold  $(T^4 \times S^2, \omega)$ obtained by taking the product of a symplectic $4$-torus and the $2$-sphere with the standard area form. Let $Z^4 \subset (T^4 \times S^2, \omega)$ be a Donaldson divisor with twisting parameter $k \gg 1$, i.e. $\mathrm{PD}[Z] = k[\omega]$. Let $\pi : E \to Z$ denote the unit normal bundle of $Z$ in $T^4 \times S^2$. By Donaldson's symplectic Lefschetz hyperplane theorem, 
      $$\pi_1(Z) \cong \pi_1(T^4 \times S^2) \cong \mathbf{Z}^4.$$
      Let us equip $E$ with an adapted metric $g_{E} = \pi^* g_Z + \theta \otimes \theta$ (Definition \ref{def-adapted}), with fiber circles of unit radius. From the proof of \cite[Theorem A, Part (2)]{pscbundle} as well as \cite[Lemma 3.1]{pscbundle}, we obtain a map $f : E \to K$ to a simplicial complex with $\dim K \leq 3$ such that:
      \begin{enumerate}
          \item $f_* : \pi_1(E) \to \pi_1(K)$ is an isomorphism,
          \item Let $H \leq \pi_1(E) \cong \pi_1(K)$ be any subgroup. Let $\widetilde{E}, \widetilde{K}$ denote the covering spaces of $E, K$ corresponding to $H$, respectively. There exists a constant $C = C(g_E, K, f)$ independent of $H$, such that the diameter of the fibers of the lift $\widetilde{f} : \widetilde{E} \to \widetilde{K}$ are uniformly bounded by $C$. In particular, $\mathrm{UW}_3(\widetilde{E}, \widetilde{g}_E) \leq C$.
      \end{enumerate}
      For any $n > 0$, let $p_n : Z_n \to Z$ be the cover corresponding to the subgroup $(n\mathbf{Z})^4 \leq \mathbf{Z}^4$. Let $\pi_n : E_n \to Z_n$ denote the pullback of $E$ over this cover, equipped with the pullback of the adapted metric $g_n := p_n^*g_E$. By \cite[Proposition 5]{pscbundle} (see also, \cite[Theorem A, Part (4)]{pscbundle}), $Z_n$ admits a positive degree $1$-Lipschitz map to the flat $4$-torus $\mathbf{R}^4/(n\mathbf{Z})^4$. Since the target has injectivity radius of the order of $n$, $\mathrm{UW}_3(Z_n; \mathbf{Z}) \gtrsim n$ by Corollary \ref{cor-fcl}. 
      
      We now fix $n \gg C$. Let $g_{E, \varepsilon} = \pi^* g_Z + \varepsilon^2 \theta \otimes \theta$ be the adapted metric on $E$ associated with the same auxiliary connection as $g_E$, but with fiber radius $\varepsilon < 1$. Let $g_{n, \varepsilon} := p_n^* g_{E, \varepsilon}$. Then $g_{n, \varepsilon} < {g_n}$, therefore $\mathrm{UW}_3(E_n, g_{n, \varepsilon}) < \mathrm{UW}(E_n, g_n) \leq C$ as well. Thus, even though there is a Cheeger-Gromov collapse $(E_n, g_{n, \varepsilon}) \xrightarrow{\mathrm{GH}} Z_n$, we still have
      \begin{equation*}
      \limsup_{\varepsilon \to 0} \mathrm{UW}_3(E_n, g_{n, \varepsilon}) \leq C \ll n \lesssim \mathrm{UW}_3(Z_n).
      \end{equation*}
      This proves the result.
\end{proof}

\begin{remark}\label{rem-pscbundles}
We observe here that the example above do not satisfy the conclusion of Theorem \ref{cor-fcl3}. Note that $\mathrm{HS}(Z_n; \mathbf{Z}) \gtrsim n$, as the positive degree $1$-Lipschitz map to the flat torus $\mathbf{R}^4/(n\mathbf{Z})^4$ can be composed with a degree one $1$-Lipschitz map to a sphere of radius comparable with $n$. Recall the twisting parameter $k$ of the Donaldson divisor $Z \subset (T^4 \times S^2, \omega)$. 

Now, suppose the twisting parameter $k$ is odd. Then, $w_2(Z_n) \neq 0$ by \cite[Proof of (4), pg.~14]{pscbundle}. Since $Z_n$ is orientable, this shows that $Z_n$ does not admit a $\mathrm{Pin}^{-}$ structure. 

On the other hand, suppose $k$ even. From the proof of \cite[Proposition 5]{pscbundle}, we know that the degree of the map to the torus is divisible by $k$, hence is even. Therefore, even though $\mathrm{HS}(Z_n; \mathbf{Z}) \gtrsim n$, there is apriori no lower bound on $\mathrm{HS}(Z_n; \mathbf{Z}_2)$. In fact, aposterori, Theorem \ref{cor-fcl3} implies $\mathrm{HS}(Z_n; \mathbf{Z}_2) < C$ for some constant $C > 0$ independent of $n$.
\end{remark}

\section{Singular ruling and macroscopic scalar curvature}\label{sec-strat} In this section, we introduce sufficient conditions on the base manifolds of circle bundles that ensure that the total spaces admit metrics with arbitrarily large macroscopic scalar curvature. We begin with the following rather soft notion, inspired by a comment of Gromov \cite[Section 1.2, pg.~10]{grom}. 

\begin{definition}\label{def-ruling}
    Let $M$ be a Riemannian manifold, $\lambda > 0$ be a positive constant. A \emph{pre-ruling} of $M$ is a collection of smoothly embedded $2$-spheres $\mathfrak{S} = \{\Sigma_\alpha \subset M\}$, such that for all $p \in M$ there exists at least one sphere $\Sigma_\alpha \in \mathfrak{S}$ passing through $p$. 
    
    We call $\Sigma_\alpha \in \mathfrak{S}$ the \emph{ruling spheres}. Note that we do not demand continuity of the family of ruling spheres $\mathfrak{S}$ in any sense. Furthermore, ruling spheres are allowed to intersect each other. We also make the following definitions:
    \begin{enumerate}
    \item A pre-ruling $\mathfrak{S}$ of $M$ is \emph{$\lambda$-small} if for all $\Sigma \in \mathfrak{S}$, $\diam(\Sigma) < \lambda$.
    \item A pre-ruling $\mathfrak{S}$ of $M$ is a \emph{ruling} if there exists an integral cohomology class $\omega \in H^2(M; \mathbf{Z})$ such that $\langle \omega, [\Sigma] \rangle = \pm 1$ for all $\Sigma \in \mathfrak{S}$. In this case, we say $\omega$ \emph{tames} $\mathfrak{S}$. 
    \end{enumerate} 
\end{definition}

\begin{definition}\label{def-assoccirc}
Let $M$ be closed Riemannian manifold with a $\lambda$-small ruling $\mathfrak{S}$. Let $\omega \in H^2(M; \mathbf{Z})$ denote a taming class for the ruling $\mathfrak{S}$. Consider the principal circle bundle
$$S^1\hookrightarrow E \stackrel{\pi}{\to} M$$
with $c_1(E) = \omega$. We say $\pi : E \to M$ is a circle bundle \emph{associated} to the ruling $\mathfrak{S}$.
\end{definition}

The key result of this section is that the total space $E$ equipped with a $U(1)$-invariant adapted metric (cf. Definition \ref{def-adapted}) satisfies the hypothesis of Conjecture \ref{conj-abgtrue}, provided the circle fibers of $\pi : E \to M$ have sufficiently small radius. 

\begin{theorem}\label{thm-circbdlmscal}
    Let $(M^d, g_M)$ be a closed Riemannian manifold with a $3/2$-small ruling $\mathfrak{S}$. Let $\pi : E \to M$ be the circle bundle associated with $\mathfrak{S}$. We equip $E$ with an adapted metric $$g_E = \pi^* g_M + \left ( \frac\rho\pi \right )^2 \theta \otimes \theta,$$
    for some auxiliary choice of connection $\theta \in \Omega^1(E; \mathfrak{u}(1))$. Given any $\sigma > 0$, there exists $\rho > 0$ such that $(E, g_E)$ has macroscopic scalar curvature $\msc \geq \sigma$ (cf. Definition \ref{def-msc}).
\end{theorem}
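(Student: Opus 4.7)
The plan is to show that for every $x \in E$, the unit ball $B_1(\tilde x)$ in the universal cover of $B_2^E(x)$ has volume at most $2\rho \cdot V_0$, where $V_0$ depends only on $(M, g_M)$ and not on $\rho$. Since $V^{d+1}_1(\sigma) > 0$ for any $\sigma > 0$, choosing $\rho$ small enough so that $2\rho V_0 \le V^{d+1}_1(\sigma)$ then gives $\msc \ge \sigma$ by Remark \ref{rem-mscprop}(1). Fix $x \in E$ and set $m := \pi(x)$.

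\textbf{Step 1: A Hopf $S^3$ inside $B_2^E(x)$.} By the ruling hypothesis, pick $\Sigma \in \mathfrak{S}$ with $m \in \Sigma$, $\diam(\Sigma) < 3/2$, and $\langle \omega, [\Sigma] \rangle = \pm 1$. The restricted circle bundle $\pi^{-1}(\Sigma) \to \Sigma$ has Euler number $\pm 1$, so topologically it is the Hopf fibration $S^1 \to S^3 \to S^2$. Since $\pi$ is a Riemannian submersion and each fiber circle (of circumference $2\rho$) has intrinsic diameter $\rho$, any two points of $\pi^{-1}(\Sigma)$ are joined by the horizontal lift of a path in $\Sigma$ of length $< 3/2$ followed by a fiber arc of length $\le \rho$. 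Hence $\diam_E(\pi^{-1}(\Sigma)) < 3/2 + \rho < 2$ for $\rho < 1/2$, and since $x \in \pi^{-1}(\Sigma)$, we conclude $\pi^{-1}(\Sigma) \subset B_2^E(x)$.

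\textbf{Step 2: Nullity of the fiber and the universal cover.} Consider the $S^1$-invariant open set $V := \pi^{-1}(B_{3/2}^M(m))$. By the same submersion estimate, $V \subset B_2^E(x)$ for $\rho < 1/2$, and $V$ contains the simply connected $\pi^{-1}(\Sigma) \cong S^3$ passing through $x$. From the homotopy long exact sequence of the principal $S^1$-bundle $V \to B_{3/2}^M(m)$ and the nullhomotopy of its fiber provided by $S^3 \subset V$, we obtain $\pi_1(V) \cong \pi_1(B_{3/2}^M(m))$. Hence the universal cover $\widetilde V$ is obtained by pulling back the circle bundle $V \to B_{3/2}^M(m)$ along the universal covering $\widetilde{B_{3/2}^M(m)} \to B_{3/2}^M(m)$. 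In particular the induced projection $\widetilde{\pi} : \widetilde V \to \widetilde{B_{3/2}^M(m)}$ is a principal $S^1$-bundle with totally geodesic fibers of length $2\rho$, carrying the lifted adapted metric.

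\textbf{Step 3: Volume bound via Fubini.} Fix a lift $\tilde x$ of $x$ in $\widetilde{B_2^E(x)}$. Any point in $B_1(\tilde x)$ projects via the covering map into $B_1^E(x) \subset V$, so $B_1(\tilde x)$ lies in the relevant component of the preimage of $V$, which we identify with $\widetilde V$ using the induced compatible lift. Set $\tilde m := \widetilde{\pi}(\tilde x)$. Since $\widetilde{\pi}$ is $1$-Lipschitz, $\widetilde{\pi}(B_1(\tilde x)) \subset B_1^{\widetilde{B_{3/2}^M(m)}}(\tilde m)$. Integrating over the circle fibers of the Riemannian submersion $\widetilde{\pi}$ yields
\[
\vol(B_1(\tilde x)) = \int_{\widetilde{B_{3/2}^M(m)}} \ell\bigl(B_1(\tilde x) \cap \widetilde{\pi}^{-1}(q)\bigr) \, d\vol(q) \le 2\rho \cdot \vol\bigl(B_1^{\widetilde{B_{3/2}^M(m)}}(\tilde m)\bigr).
\]

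\textbf{Step 4: Bishop--Gromov on the base.} Since $M$ is closed, $\mathrm{Ric}_{g_M} \ge -(d-1)K$ for some constant $K = K(M) > 0$. The space $\widetilde{B_{3/2}^M(m)}$ is locally isometric to $M$, and every geodesic of length $\le 1$ starting at $\tilde m$ stays inside it (its projection stays in $B_1^M(m) \subset B_{3/2}^M(m)$). The Bishop-Gromov volume comparison then yields $\vol(B_1^{\widetilde{B_{3/2}^M(m)}}(\tilde m)) \le V_0$ for a constant $V_0 = V_0(M, d)$ independent of $\rho$ and of the base point. Combining with Step 3 gives $\vol(B_1(\tilde x)) \le 2\rho V_0$ uniformly in $x$, and we choose $\rho > 0$ small enough so that $2\rho V_0 \le V^{d+1}_1(\sigma)$ to finish.

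The main technical obstacle lies in Step 2, where the nullity of the fiber inside $V$ must be leveraged to guarantee that the lifted $S^1$-action on $\widetilde V$ is still an $S^1$-action (not an $\mathbb{R}$-action), so that the fibers remain closed circles of length $2\rho$ rather than unwrapping to non-compact lines. This is exactly the step at which the ruling assumption $\langle \omega, [\Sigma] \rangle = \pm 1$, producing a simply connected Hopf $S^3$ inside $V$, is crucial.
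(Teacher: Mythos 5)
Your proposal is correct and follows essentially the same route as the paper: the inclusion $B_1(x)\subset\pi^{-1}B_{3/2}(\pi(x))\subset B_2(x)$, the $\pi_1$-nullity of the fiber via the Hopf $S^3$ over a ruling sphere, the coarea bound $\vol(B_1(\tilde x))\le 2\rho\cdot\vol(\text{unit ball in the base cover})$, and Bishop--Gromov on the base. The one imprecision is in Step 3: the component of $p^{-1}(V)$ in the universal cover of $B_2^E(x)$ containing $\tilde x$ is the cover of $V$ corresponding to $\ker\bigl(i_*:\pi_1(V)\to\pi_1(B_2^E(x))\bigr)$, which need not be the universal cover $\widetilde{V}$; this is harmless because your argument (circle fibers of length $2\rho$ over a cover of $B_{3/2}^M(m)$, plus Bishop--Gromov on that cover) applies verbatim to any connected cover of $V$ once the fiber is known to be nullhomotopic in $V$ --- the paper handles exactly this point by identifying the component as a fiber product in its Step 3.
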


\begin{proof}
    Select a point $x \in E$ and let $y := \pi(x) \in M$. Let $p : \widetilde{B}_2(x) \to B_2(x)$ denote the universal cover of the ball $B_2(x)$ of radius $2$ centered at $x$. Select $\widetilde{x} \in \widetilde{B}_2(x)$ such that $p(\widetilde{x}) = x$. Let us equip $\widetilde{B}_2(x)$ with the pulled back metric. Let $B_1(\widetilde{x}) \subset \widetilde{B}_2(x)$ denote the unit ball centered at $\widetilde{x}$. We shall find a uniform $\rho > 0$ independent of $x$ such that   $$\mathrm{vol}(B_1(\widetilde{x})) < V^{d+1}_\sigma(1),$$ where $V^{d+1}_\sigma(1)$ denotes the volume of a unit ball in a simply connected $(d+1)$-dimensional Riemannian space form of constant scalar curvature $\sigma$. 

    \begin{enumerate}[topsep=1ex,itemsep=1ex,partopsep=0.5ex,parsep=0.5ex,label=\textbf{Step \arabic*.}, wide=0pt]
    \item Let $B_{3/2}(y)$ denote the ball of radius $3/2$ centered at $y$. First, we show that the following chain of inclusion holds:  
    $$B_1(x) \subset \pi^{-1} B_{3/2}(y) \subset B_2(x).$$
    The first inclusion follows since $\pi$ is a Riemannian submersion and therefore is 1-Lipschitz. For the second inequality, select a point $z \in \pi^{-1} B_{3/2}(y)$. Choose a minimal geodesic segment $\gamma \subset M$ from $y$ to $\pi(z)$. Let $\widetilde{\gamma} \subset E$ be the horizontal lift of $\gamma$ with initial point $x$. Note that the length of $\widetilde{\gamma}$ is the same as that of $\gamma$. Therefore, $\ell(\widetilde{\gamma}) = \ell(\gamma) \leq 3/2$. Let $w$ be the endpoint of $\widetilde{\gamma}$. Note that $\pi(w) = \pi(z)$. Therefore, $$\mathrm{dist}(x, z) \leq \mathrm{dist}(x, w) + \mathrm{dist}(w, z) \leq \ell(\widetilde{\gamma}) + \mathrm{diam}(\pi^{-1}(\pi(z))) \leq 3/2 + \rho.$$ Choosing $\rho < 1/2$, we may thus ensure $\mathrm{dist}(x, z) < 2$ for all $z \in \pi^{-1} B_{3/2}(y)$. Hence, the second inclusion follows. Let $i : \pi^{-1} B_{3/2}(y) \hookrightarrow B_2(x)$ denote the inclusion map.
    
    \item Next, we show that $\pi : \pi^{-1} B_{3/2}(y)  \to B_{3/2}(y)$ induces an isomorphism on fundamental groups. As this is a circle bundle, using the homotopy long exact sequence for fiber bundles we observe that it suffices to check the inclusion map of the fiber $\pi^{-1}(y) \hookrightarrow \pi^{-1} B_{3/2}(y)$ is $\pi_1$-null. To this end, let $\Sigma_y \in \mathfrak{S}$ be a ruling sphere passing through $y$. As $\mathrm{diam}(\Sigma_y) < 3/2$, we have $\Sigma_y \subset B_{3/2}(y)$. Consequently, the inclusion map of the fiber factors as
    $$\pi^{-1}(y) \hookrightarrow \pi^{-1}(\Sigma_y) \hookrightarrow \pi^{-1} B_{3/2}(y).$$
    Since $\pi : E \to M$ is a fiber bundle associated with the ruling $\mathfrak{S}$, we have $\langle c_1(E), [\Sigma_y]\rangle = \pm 1$. Therefore, $\pi : \pi^{-1}(\Sigma_y) \to \Sigma_y$ is a circle bundle over the 2-sphere with Euler class $\pm 1$. In other words, it is the Hopf fibration. In particular, $\pi^{-1}(\Sigma_y) \cong S^3$ is simply connected. This proves the inclusion map of the fiber is $\pi_1$-null.

    \item Let $U$ denote the connected component of $(\pi \circ p)^{-1} B_{3/2}(y) \subset \widetilde{B}_2(x)$ containing $\widetilde{x}$. Then, 
    $$p|_U : U \to \pi^{-1} B_{3/2}(y)$$
    defines a covering map corresponding to the subgroup $\ker i_* \leq \pi_1(\pi^{-1} B_{3/2}(y), x)$. By {\bf Step 2}, $\pi : \pi^{-1} B_{3/2}(y) \to B_{3/2}(y)$ induces an isomorphism on fundamental groups. Thus, $\ker(i_*)$ is isomorphic to the subgroup $H := \pi_*(\ker i_*) \leq \pi_1(B_{3/2}(y), y)$. Let $q : V \to B_{3/2}(y)$ be the cover corresponding to $H$. Let $W$ denote the fiber product of the covering map $q$ and the circle bundle $\pi$ over $B_{3/2}(x)$, illustrated in the following fiber square:
    \begin{center}
    \begin{tikzcd}[row sep=9ex, column sep=5ex]
    W \arrow[d, "g"'] \arrow[r, "f"]     & V \arrow[d, "q"'] \\
    \pi^{-1} B_{3/2}(x) \arrow[r, "\pi"] & B_{3/2}(x)       
    \end{tikzcd}
    \end{center}
    Since $f$ is the pullback of the circle bundle $\pi$ over $V$, it is a circle bundle. Likewise, since $g$ is the pullback of the covering map $q$ over $\pi^{-1} B_{3/2}(x)$, it is a covering map. Therefore, at the level of the fundamental group, $g_*$ and $q_*$ are injective. Since we have already established $\pi_*$ is an isomorphism, this implies $f_*$ is injective as well. Since $f$ is a fiber bundle with connected fibers, $f_*$ is surjective. Thus, $f$ induces an isomorphism $f_* : \pi_1(W) \to \pi_1(V)$. Consequently, 
    $$(\pi_* \circ g_*)\pi_1(W) = q_*\pi_1(V) = H.$$
    Hence, $g_* \pi_1(W) = (\pi_*)^{-1}H = \ker i_*$. Therefore, $W \cong U$ as covering spaces over $\pi^{-1} B_{3/2}(y)$. Furthermore, under this isomorphism, the metric on $U$ induced from the inclusion $U \subset \widetilde{B}_2(x)$ agrees with the adapted metric on $W$ given by 
    $q^*g_M + (\rho/\pi)^2 \cdot q^*\theta \otimes q^*\theta$,
    obtained from pulling back the adapted metric on $\pi^{-1} B_{3/2}(y) \subset E$. 

    \item Since $p : B_2(\widetilde{x}) \to B_2(x)$ is a covering map, it is $1$-Lipschitz. Therefore, $B_1(\widetilde{x}) \subset p^{-1} B_1(x)$. By {\bf Step 1}, we have $B_1(x) \subset \pi^{-1} B_{3/2}(y)$. Therefore, $B_1(\widetilde{x}) \subset (\pi \circ p)^{-1} B_{3/2}(y)$. As $B_1(\widetilde{x})$ is connected, it is contained in the connected component $U \subset (\pi \circ p)^{-1} B_{3/2}(y)$. Further, note that $\pi$ is $1$-Lipschitz, as it is a Riemannian submersion. Therefore, $\pi \circ p : U \to B_{3/2}(y)$ is also $1$-Lipschitz. Consequently, $$B_1(\widetilde{x}) \subset (\pi \circ p)^{-1} B_1(y) \subset (\pi \circ p)^{-1} B_{3/2}(y).$$ 
    Since $f : U \cong W \to V$ is a Riemannian submersion as well, it is $1$-Lipschitz. Therefore, $f(B_1(\widetilde{x})) \subset V$ is contained in the unit ball $B \subset V$ centered at $f(\widetilde{x})$. By the co-area formula applied to the circle bundle $f : W \to V$, we obtain
    $$\mathrm{vol}(B_1(\widetilde{x})) \leq 2\rho \cdot \mathrm{vol}(B),$$
    since the length of the circle fibers of $f$ is $2\rho$. The quantity $\mathrm{vol}(B)$ is roughly like the macroscopic scalar curvature of $M$ at $y$, with slightly different parameters. Since $M$ is a compact Riemannian manifold, there exists a constant $\kappa > 0$ such that the Ricci curvature of $M$ satisfies the lower bound $\mathrm{Ric} \geq -\kappa/d$. This bound holds for the ball $B_{3/2}(y) \subset M$ as well as for the cover $V$ of $B_{3/2}(y)$. By the Bishop-Gromov comparison inequality, $\mathrm{vol}(B) \leq V_{-\kappa}^d(1)$. 
    \end{enumerate}
    
    Therefore, let
    $\rho <  2^{-1} \min(1, V^{d+1}_\sigma(1)/V_{-\kappa}^d(1))$. From the discussion above, we obtain $\mathrm{vol}(B_1(\widetilde{x})) < V^{d+1}_\sigma(1)$. Hence, $\mathrm{mscal}(x) \geq \sigma$ for all $x \in E$, as desired.
\end{proof}

\section{Two constructions of base manifolds}\label{sec-construction} 

In this section, we give constructions of $\mathrm{Pin}^-$ manifolds which admit small rulings, yet have large hypersphericity radius. These will serve as the base manifolds for the circle bundles whose total spaces will be our counterexamples to Conjecture \ref{conj-abgtrue}.

\subsection{Corrugation of enlargeable manifolds} \label{sec-corrugration} 

The following notion is a variant of the notion of enlargeable manifolds introduced by Gromov and Lawson \cite{glpi1}, adapted for mod $2$ coefficients. See, \cite[Definition 4.1]{lz}.

\begin{definition}\label{def-z2en}
    A closed Riemannian manifold $N$ of dimension $d$ is called \emph{$\mathbf{Z}_2$-enlargeable} if for any $R> 0$, there exists a finite sheeted cover of $N$ with $\mathbf{Z}_2$-hypersphericity at least $R$. 
\end{definition}

\begin{exam}\label{eg-z2en} 
    In what follows, we summarize some properties of $\mathbf{Z}_2$-enlargeability.
    \begin{enumerate}
    \item The circle $S^1 = \mathbf{R}/\mathbf{Z}$ is a $\mathbf{Z}_2$-enlargeable manifold. Indeed, for any $n > 0$, the $n$-sheeted cover $\mathbf{R}/n\mathbf{Z}$ of $\mathbf{R}/\mathbf{Z}$ has hypersphericity $\sim n$.

    \item Products of $\mathbf{Z}_2$-enlargeable manifolds is also $\mathbf{Z}_2$-enlargeable. Therefore, the torus $T^d = S^1 \times \cdots \times S^1$ is $\mathbf{Z}_2$-enlargeable.

    \item Any manifold $M$ admitting a non-zero $\mathbf{Z}_2$-degree map to a $\mathbf{Z}_2$-enlargeable manifold, is also $\mathbf{Z}_2$-enlargeable. Therefore, if $M$ is a closed $d$-manifold possesses a non-zero $\mathbf{Z}_2$-degree map $M^d \to T^d$, then $M$ is $\mathbf{Z}_2$-enlargeable.

    \item $\mathbf{Z}_2$-enlargeable manifolds of dimension $d \leq 7$ do not admit positive scalar curvature metrics, see \cite[Theorem 4.2]{lz}.
    \end{enumerate}
\end{exam}

\begin{remark}
    Orientable $\mathbf{Z}$-enlargeable manifolds need not always be $\mathbf{Z}_2$-enlargeable. Indeed, consider the $4$-manifold $Z$ from Remark \ref{rem-pscbundles}, i.e., a Donaldson divisor of $(T^4 \times S^2, \omega)$ with $\mathrm{PD}[Z] = k[\omega]$. Then, Remark \ref{rem-pscbundles} shows $Z$ is $\mathbf{Z}$-enlargeable. Suppose that $Z$ was also $\mathbf{Z}_2$-enlargeable. Then, there is a sequence of covers $Z'_n \to Z$ such that $\mathrm{HS}(Z'_n; \mathbf{Z}_2) \geq n$. Note that $Z$ is orientable as it is symplectic. Suppose the twisting parameter $k$ is \emph{even}. Then, $Z$ is also spin (see, \cite[Proof of $(4)$, pg.~14]{pscbundle}). Therefore, $Z_n'$ is also spin, and in particular they admit $\mathrm{Pin}^-$ structures. The proof of Proposition \ref{prop-pscbundles} produces a circle bundle $E_n' \to Z_n'$ such that $\mathrm{UW}_3(E_n') \ll n/2 \leq 1/2~ \mathrm{HS}(Z_n'; \mathbf{Z}_2)$, for sufficiently large $n \gg 1$. This violates Theorem \ref{cor-fcl3}, giving a contradiction. Hence, $Z$ is not $\mathbf{Z}_2$-enlargeable.
\end{remark}

Let $N^d$ be a $\mathbf{Z}_2$-enlargeable manifold of dimension $d \geq 3$, that admits a $\mathrm{Pin}^-$ structure. Fix $\delta > 0$ be a positive constant such that $\delta \ll \mathrm{inj.rad.}(N)$. By definition of $\mathbf{Z}_2$-enlargeability, there exists a finite cover $N_n \to N$ such that $\mathrm{HS}(N_n; \mathbf{Z}_2) \geq n$. Select a maximal $\delta$-separated collection of points, i.e., a discrete subset $S \subset N$ such that 
\begin{enumerate}
    \item $S$ is $\delta$-dense, i.e. $N \subset \bigcup_{x \in S} B_\delta(x)$,
    \item $S$ is $\delta/2$-sparse, i.e. $\{B(x; \delta/2) : x \in S\}$ is a pairwise disjoint collection of subsets.
\end{enumerate}
Let $S_n \subset N_n$ be the preimage of $S \subset N$ by the covering map $N_n \to N$. Then $S_n \subset N_n$ is also maximal $\delta$-separated. Let $Q = S^2(1/2) \times S^{d-2}(1)$ denote the product of two round spheres of indicated radii and dimensions. Fix a point $q \in Q$. We define $M_n$ by gluing a copy of $Q\setminus B_{\delta/2}(q)$ to every boundary component of $N_n \setminus \bigcup_{x \in S_n} B_{\delta/2}(x)$. Smoothing the metric near the gluing region, we obtain a Riemannian metric on $M_n$. Note that, topologically, 
$$M_n \cong N_n \#^{|S_n|} (S^2 \times S^{d-2}).$$

\begin{prop}\label{prop-corrugation1}
    $M_n$ admits a $\mathrm{Pin}^-$ structure.
\end{prop}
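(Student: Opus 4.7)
The plan is to directly verify the vanishing of the second Wu class $v_2(TM_n) = w_2(TM_n) + w_1(TM_n)^2 \in H^2(M_n; \mathbf{Z}_2)$, by reducing it to the corresponding statements for the summands $N_n$ and $Q = S^2 \times S^{d-2}$.

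First I will establish $\mathrm{Pin}^-$ structures on each piece. For $N_n$, the covering map $p : N_n \to N$ satisfies $p^* TN \cong TN_n$, so by naturality of Stiefel-Whitney classes $v_2(TN_n) = p^* v_2(TN) = 0$. For $Q = S^2 \times S^{d-2}$, both factors are spheres and hence spin, so the product is spin, and in particular $w_1(TQ) = 0 = w_2(TQ)$, so $v_2(TQ) = 0$.

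Next I will show that the iterated connect sum inherits the $\mathrm{Pin}^-$ property. Write $M_n = A \cup_\Sigma B$, where $A = N_n \setminus \bigsqcup_{x \in S_n} B_{\delta/2}(x)$, $B = \bigsqcup_{x \in S_n} (Q \setminus B_{\delta/2}(q))$, and $\Sigma = \bigsqcup_{x \in S_n} S^{d-1}$ is the common boundary along which they are glued. Since connect summing (together with any smoothing of the metric) is a purely local operation on the underlying smooth structure, the tangent bundle $TM_n$ restricts to $TN_n$ on $A$ and to $TQ$ (one copy per summand) on $B$. By naturality of Stiefel-Whitney classes, the classes $v_2(TM_n)|_A$ and $v_2(TM_n)|_B$ are the restrictions of $v_2(TN_n)$ and $v_2(TQ)$, both of which vanish by the previous paragraph.

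It remains to argue that $v_2(TM_n)$ itself vanishes. For this I invoke the Mayer--Vietoris sequence in $\mathbf{Z}_2$-cohomology for the cover $M_n = A \cup B$:
\begin{equation*}
H^1(\Sigma; \mathbf{Z}_2) \to H^2(M_n; \mathbf{Z}_2) \to H^2(A; \mathbf{Z}_2) \oplus H^2(B; \mathbf{Z}_2).
\end{equation*}
Since $d \geq 3$, each connected component of $\Sigma$ is a sphere $S^{d-1}$ with $d - 1 \geq 2$, so $H^1(\Sigma; \mathbf{Z}_2) = 0$, and therefore the restriction map $H^2(M_n; \mathbf{Z}_2) \to H^2(A; \mathbf{Z}_2) \oplus H^2(B; \mathbf{Z}_2)$ is injective. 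As $v_2(TM_n)$ restricts to zero on both $A$ and $B$, we conclude $v_2(TM_n) = 0$, so $M_n$ admits a $\mathrm{Pin}^-$ structure.

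I do not anticipate a genuine obstacle here: the argument is essentially the standard fact that $\mathrm{Pin}^-$ is closed under connect sum in dimension at least three, combined with naturality of Wu classes under the finite covering $N_n \to N$. The one point to keep track of is the case $d = 3$, where $S^{d-2} = S^1$ is non-simply connected; but $S^1$ is still spin, so the spin (hence $\mathrm{Pin}^-$) structure on $Q = S^2 \times S^1$ persists, and $H^1(S^{d-1}; \mathbf{Z}_2) = H^1(S^2; \mathbf{Z}_2) = 0$ keeps the Mayer--Vietoris argument intact.
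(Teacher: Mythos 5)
Your proof is correct and follows essentially the same route as the paper's: naturality of the Wu class under the covering $N_n \to N$, spin-ness of $S^2 \times S^{d-2}$, and closure of the $\mathrm{Pin}^-$ condition under connected sum. The only difference is that you supply the Mayer--Vietoris justification (using $H^1(S^{d-1};\mathbf{Z}_2)=0$ for $d\geq 3$) for the connect-sum step, which the paper simply cites as a standard fact.
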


\begin{proof}
    Since $N$ admits a $\mathrm{Pin}^-$ structure, its second Wu class vanishes. By naturality of the Wu class, we see that $N_n$ also admits a $\mathrm{Pin}^-$ structure. Note that $S^2 \times S^{d-2}$ is spin (hence also $\mathrm{Pin}^-$) and connect sum of $\mathrm{Pin}^-$ manifolds continues to be a $\mathrm{Pin}^-$ manifold. Therefore, $M_n \cong N_n \#^{|S_n|} (S^2 \times S^{d-2})$ admits a $\mathrm{Pin}^-$ structure.
\end{proof}

\begin{prop}\label{prop-corrugation2}
    $\mathrm{HS}(M_n; \mathbf{Z}_2) \gtrsim n$.
\end{prop}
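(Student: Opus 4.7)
The plan is to construct a continuous map $h : M_n \to N_n$ of $\mathbf{Z}_2$-degree one whose Lipschitz constant $L$ depends only on $\delta$ and the geometry of $N$ and $Q = S^2(1/2) \times S^{d-2}(1)$, but not on $n$. Given such $h$, the hypothesis $\mathrm{HS}(N_n; \mathbf{Z}_2) \geq n$ provides, for each $R < n$, a $1$-Lipschitz map $f_n : N_n \to S^d(R)$ of nonzero $\mathbf{Z}_2$-degree. The composition $f_n \circ h : M_n \to S^d(R)$ will then be $L$-Lipschitz of nonzero $\mathbf{Z}_2$-degree. Rescaling the target to radius $R/L$ and letting $R \to n$ would yield $\mathrm{HS}(M_n; \mathbf{Z}_2) \geq n/L \gtrsim n$.

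To define $h$, first decompose
\begin{equation*}
M_n = \bigg(N_n \setminus \bigsqcup_{x \in S_n} B_{\delta/2}(x)\bigg) \cup \bigsqcup_{x \in S_n} X_x,
\end{equation*}
where each handle $X_x$ is a copy of the fixed manifold $X := Q \setminus B_{\delta/2}(q)$ glued along the isometric identification $\partial X_x \cong \partial B_{\delta/2}(x)$. On the common part $N_n \setminus \bigsqcup_x B_{\delta/2}(x) \subset M_n$, set $h$ to be the identity into $N_n$. On each handle $X_x$, fix once and for all a geodesic collar $\partial X \times [0, c] \hookrightarrow X$ of width $c > 0$ depending only on the fixed geometry of $Q$; then in exponential coordinates on $\overline{B_{\delta/2}(x)} \subset N_n$, set
\begin{equation*}
h(p, t) := \exp_x\!\bigl((1 - t/c)\, v(p)\bigr), \qquad (p, t) \in \partial X \times [0, c],
\end{equation*}
where $v(p) \in T_x N_n$ is the vector such that $\exp_x(v(p))$ corresponds to $p$ under the gluing. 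Extend $h$ to the complement of the collar in $X_x$ by the constant map to $x$. The result is continuous and restricts to the identity on $\partial X_x$. To compute its $\mathbf{Z}_2$-degree, pick a regular value $y \in N_n \setminus \bigsqcup_x B_{\delta/2}(x)$ in the identity region: then $h^{-1}(y) = \{y\}$ has local degree $\pm 1$, so $\deg_{\mathbf{Z}_2}(h) = 1$.

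The main obstacle will be verifying that the Lipschitz constant $L$ is independent of $n$. This reduces to three observations: (i) since the covering map $N_n \to N$ is a local isometry and $\delta$ is chosen less than the injectivity radius of $N$, every $\overline{B_{\delta/2}(x)} \subset N_n$ has the same fixed local model; (ii) each handle $X_x$ is isometric to the fixed manifold $X$, so the collar width $c$ and the Lipschitz constant of the collar formula (bounded by $C \cdot \max(1, \delta/(2c))$ for a universal $C$) depend only on $\delta$ and the geometry of $Q$; and (iii) the $\delta/2$-sparseness of $S_n$ ensures the handles do not interact, so the local bounds assemble into a global bound without any multiplicative penalty in $n$. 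The bi-Lipschitz distortion arising from the exponential map on these balls and from the smoothing of the metric across each gluing region are bounded by constants depending only on $\delta$, $N$, and $Q$. Assembling all of these gives a finite $L$ independent of $n$, which completes the argument.
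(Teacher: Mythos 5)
Your proposal is correct and follows essentially the same route as the paper: compose the nonzero $\mathbf{Z}_2$-degree, $1$-Lipschitz map $N_n \to S^d(\sim n)$ furnished by $\mathrm{HS}(N_n;\mathbf{Z}_2)\ge n$ with the degree-one map $M_n \to N_n$ that crushes each $S^2\times S^{d-2}$ summand, noting that the latter has Lipschitz constant depending only on $\delta$, $N$, and $Q$ (not on $n$), so that rescaling the target sphere loses only a uniform factor. The paper states this in three lines; your explicit collar construction of the crushing map and the uniformity checks (i)--(iii) are a faithful, more detailed implementation of the same idea.
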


\begin{proof}
    Since $\mathrm{HS}(N_n; \mathbf{Z}_2) \geq n$, there exists a $1$-Lipschitz map of non-zero $\mathbf{Z}_2$-degree from $N_n$ to a sphere of radius $\gtrsim n$. We compose it with the degree $1$ map $M_n \to N_n$ given by crushing the $S^2 \times S^{d-2}$ connect summands. The composition continues to be $1$-Lipschitz and of non-zero $\mathbf{Z}_2$-degree. Thus, $\mathrm{HS}(M_n; \mathbf{Z}_2) \geq n$.
\end{proof}

\begin{prop}\label{prop-corrugation3}
    $M_n$ admits a $3/2$-small ruling.
\end{prop}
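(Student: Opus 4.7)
The plan is to construct the ruling $\mathfrak{S}$ explicitly, together with a taming class $\omega \in H^2(M_n; \ZZ)$. I would define $\omega := \sum_{x \in S_n} \omega_x$, where each $\omega_x$ is Poincaré dual to $\{z_0\} \times S^{d-2}$ inside the $x$-th $S^2 \times S^{d-2}$ connect summand, for a fixed $z_0 \in S^2$ with $d_{S^2}(z_0, q_1) \geq \delta/2$. Since the cycle $\{z_0\} \times S^{d-2}$ lies entirely in the $x$-th summand and avoids the removed ball $B_{\delta/2}(q)$, $\omega_x$ is well-defined, and $\omega$ restricts to zero on $N_n \setminus \bigcup_x B_{\delta/2}(x)$.

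The ruling spheres will come in two families. The first family consists of the round spheres $\Sigma_{x,y} := S^2 \times \{y\}$ for $x \in S_n$ and $y \in S^{d-2}$ with $d_{S^{d-2}}(y, q_2) \geq \delta/2$: each such sphere misses $B_{\delta/2}(q)$ and descends to a smoothly embedded $2$-sphere in $M_n$, with diameter equal to the intrinsic diameter of the round $S^2$-factor, which can be made less than $3/2$ by slightly rescaling this factor. The intersection with $\{z_0\} \times S^{d-2}$ is the single transverse point $(z_0, y)$, so $\langle \omega, [\Sigma_{x,y}]\rangle = \pm 1$. These cover every point of the $Q$-summands outside a thin tube around each gluing sphere. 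The remaining uncovered points lie in the annuli $B_\delta(x) \setminus B_{\delta/2}(x) \subset N_n$, which by $\delta$-density of $S_n$ exhaust $N_n \setminus \bigcup_x B_{\delta/2}(x)$, and in the thin tubes of the $Q$-summands.

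For the second family, consider a point $p$ in one such annulus. I would construct an embedded $2$-sphere $\Sigma(p)$ through $p$ by matching two disks across the gluing. On the $N_n$-side, take a smoothly embedded $2$-disk $D_N \subset N_n \setminus B_{\delta/2}(x)$ through $p$ with boundary a small circle $C \subset \partial B_{\delta/2}(x)$; concretely, $D_N$ can be taken as the image under $\exp_x$ of a cone with apex $\exp_x^{-1}(p) \in T_xN_n$ over a suitable small circle on $S_{\delta/2}(0)$ around the radial projection of $\exp_x^{-1}(p)$. On the $Q$-side, take the capping disk $D_Q := (S^2 \times \{y_p\}) \cap (Q \setminus B_{\delta/2}(q))$ for an appropriate $y_p \in S^{d-2}$ close to $q_2$, chosen so that $\partial D_Q$ matches $C$ across the connect-sum gluing. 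After smoothing corners along the gluing sphere, $\Sigma(p) := D_N \cup D_Q$ is a smoothly embedded $2$-sphere through $p$ of diameter at most $\diam(D_N) + \diam(D_Q) \leq O(\delta) + \pi r_{S^2} < 3/2$; its intersection with $\{z_0\} \times S^{d-2}$ occurs only in $D_Q$ as the single transverse point $(z_0, y_p) \in Q \setminus B_{\delta/2}(q)$, giving $\langle \omega, [\Sigma(p)]\rangle = \pm 1$. Points of the second family lying inside the $Q$-tubes are handled analogously, swapping the roles of $N_n$ and $Q$.

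The main technical obstacle is simultaneously arranging that the cone construction of $D_N$ yields an embedded disk avoiding $B_{\delta/2}(x)$ and that its boundary circle matches some $\partial D_Q$ under the gluing. I would address this by fixing the connect-sum gluing diffeomorphism $\partial B_{\delta/2}(x) \cong \partial B_{\delta/2}(q)$ via a linear isometry $T_xN_n \to T_qQ$ chosen to respect the product splitting $T_qQ = T_{q_1}S^2 \oplus T_{q_2}S^{d-2}$, and then selecting $y_p$ so that the circle of intersection of $S^2 \times \{y_p\}$ with $\partial B_{\delta/2}(q)$ corresponds under the gluing to the required $C$. Verifying that the cone stays in the annulus reduces to a convexity estimate showing that straight segments from $\exp_x^{-1}(p)$ to nearby points of $S_{\delta/2}(0)$ do not cross the origin, and the boundary-matching condition for every $p$ follows from a dimension count: the $(d-2)$-parameter family $\{S^2 \times \{y_p\}\}$ produces, through the product splitting, exactly the right family of small circles on $\partial B_{\delta/2}(q)$ to serve as $\partial D_Q$ for the cone-disks $D_N$ arising over the annular region.
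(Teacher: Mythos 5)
Your taming class and your first family of ruling spheres coincide with the paper's construction, and that part is fine. The genuine divergence, and the gap, is in your second family. The paper does not try to cap a half-sphere $D_Q$ with a disk $D_N$ meeting it along the gluing sphere; instead it keeps an entire ruling sphere $\Sigma \subset Q \setminus B_{\delta/2}(q)$ intact and attaches a thin ``finger'' $\partial N(\gamma)\,\triangle\,\Sigma$ along a short geodesic $\gamma$ from the point $z \in N_n \setminus \bigcup_x B_{\delta/2}(x)$ to $\Sigma$. This produces an embedded sphere through $z$, homologous to $\Sigma$, of diameter $\diam(\Sigma)+O(\delta)$, and it completely sidesteps the boundary-matching problem that your cut-and-cap construction creates.

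Two concrete steps in your version would fail as written. First, the cone from $\exp_x^{-1}(p)$ over a small circle on $S_{\delta/2}(0)$ is \emph{not} contained in the annulus when $p$ is close to the inner sphere: a chord between two points at distance $\geq \delta/2$ from the origin dips strictly inside the open ball of radius $\delta/2$ (for two points on the sphere at angular distance $\epsilon$, the midpoint has norm $(\delta/2)\cos(\epsilon/2)$), so ``the segments do not cross the origin'' is not the relevant condition, and $D_N$ leaves $M_n$ precisely where you most need it, near the gluing locus. This is repairable (e.g.\ take $D_N$ to be a radial graph over a spherical cap of $S_{\delta/2}(0)$, pushed outward to pass through $p$), but not by the convexity estimate you propose. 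Second, the boundary circles available on the $Q$-side are exactly the circles $(S^2\times\{y\})\cap \partial B_{\delta/2}(q)$; under any fixed product-respecting gluing these form a rigid one-per-point family foliating the gluing sphere by round circles ``parallel to the $S^2$-directions,'' whose radii are determined by the point they pass through. In particular, the unique such circle through the radial shadow of $p$ is generally not a small circle centered there, so it cannot serve as the link of your cone; the dimension count does not establish the match, and for directions $\hat p$ tangent to the $S^2$-factor the forced circle has radius comparable to $\delta/2$ and bounds no disk on the gluing sphere at all (a circle does not separate $S^{d-1}$ for $d\geq 4$). Either adopt the paper's finger construction, or rework $D_N$ so that its boundary is genuinely one of the circles $\gamma_y=(S^2\times\{y\})\cap\partial B_{\delta/2}(q)$ while staying outside the removed ball; as it stands the two halves of $\Sigma(p)$ do not fit together.
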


\begin{proof}
    The manifold $Q = S^2(1/2) \times S^{d-2}(1)$ admits a $1$-small pre-ruling by the spheres $S^2(1/2) \times \{*\}$. Some of these spheres intersect the ball $B_{\delta/2}(q) \subset Q$. We isotope the spheres away from this ball by a small isotopy, to obtain a $(1+O(\delta))$-small pre-ruling on $Q \setminus B_{\delta/2}(q)$. Next, for any point $z \in N_n \setminus \bigcup_{x \in S_n} B_{\delta/2}(x)$, we consider a minimal geodesic ray $\gamma$ from $z$ to the nearest boundary component of $N_n \setminus \bigcup_{x \in S_n} B_{\delta/2}(x)$. Since $S_n$ is $\delta$-dense, length of $\gamma_z$ is at most $\delta$. Suppose the terminal point of $\gamma$ is contained in the ruling sphere $S^2 \cong \Sigma \subset Q \setminus B_{\delta/2}(q)$. Then $\Sigma \cup \gamma$ is a `sphere with a hair' containing $z$. {Consider a very thin normal thickening $N(\gamma) \cong [0, 1] \times D^2(\delta/100)$ of $\gamma \cong [0, 1]$, with $\{0\} \times D^2$ containing $z$ and $\{1\} \times D^2 \subset \Sigma$}. Let $\Sigma_z = \partial N(\gamma) ~\triangle~ \Sigma$ be the symmetric difference. Then $\Sigma_z \subset M_n$ is an embedded sphere containing $z$. Effectively, $\Sigma_z$ is obtained from $\Sigma \cup \gamma$ by replacing the `hair' by a thin `finger' protruding from $\Sigma$. Note that $\Sigma_z$ is homologous to $\Sigma \subset Q$, and $\diam(\Sigma_z) = 1 + O(\delta) < 3/2$.

    The collection of all the ruling spheres of $Q \setminus B_{\delta/2}(q)$ and the spheres of the form $\Sigma_z$ gives a pre-ruling $\mathfrak{S}_n$ of $M_n$. Note that every ruling sphere is homologous to $S^2 \times \{*\} \subset S^2 \times S^{d-2}$, for some $S^2 \times S^{d-2}$ connect summand. Thus, for every connect summand $Q_x \cong S^2 \times S^{d-2}$ indexed by $x \in S_n$, let $\omega_x = \mathrm{PD}[ \{*\} \times S^{d-2}] \in H^{2}(Q_x; \mathbf{Z})$. Let 
    $$\omega_n = \sum_{x \in S_n} \omega_x \in \bigoplus_{x \in S_n} H^2(Q_x;\ZZ) \subset H^2(M_n;\ZZ),$$
    where the final inclusion is induced by maps $M_n \to Q_x$ crushing the complement of a connect summand. Then, for any ruling sphere $\Sigma \in \mathfrak{S}_n$, $\langle \omega_n, [\Sigma]\rangle = 1$. Thus, $\omega_n$ tames $\mathfrak{S}_n$, certifying that $\mathfrak{S}_n$ is a ruling. This concludes the proof. 
\end{proof}

\subsection{The Alpert-Balitskiy-Guth example}\label{sec-abg} We review an example due to Balitskiy \cite[Theorem 2.4.4]{balthesis}, elaborated further in \cite[Example 11]{abg24}. This example has subsequently also been discussed by Li-Zhang \cite[Section 2]{lz}. We begin by setting up some notation

Let $\mathbf{R}$ be equipped with the cell structure of a line graph with integer vertices, and $\mathbf{R}^5$ be equipped with the product cell structure. Let ${Z_0} \subset \mathbf{R}^5$ denote the $2$-skeleton with respect to this cell structure. Let $e_1, \cdots, e_5$ denote the coordinate unit vectors, and set $u = e_1 + \cdots + e_5$. Let ${Z_1} := {Z_0} + {u}/2$ denote the $2$-complex dual to ${Z_0}$. Let $\mathbf{Z}^4 \subset \mathbf{Z}^5$ be the subgroup generated by $e_1, \cdots, e_4$.

\begin{definition}Let $\widetilde{M}^4 \subset \mathbf{R}^5$ be the hypersurface given by the locus of points equidistant from ${Z_0}$ and ${Z_1}$, with respect to the $\ell^\infty$-norm on $\mathbf{R}^5$. Equivalently, it is the hypersurface given by the locus of points having $\ell^\infty$-distance $1/4$ from either ${Z_0}$ or ${Z_1}$. Consider the lattices
$$\Lambda_n = (2n\mathbf{Z})^4\oplus \langle u/2 + 2n e_5\rangle \subset \Lambda = (2\mathbf{Z})^4 \oplus \langle u/2 \rangle$$
for any $n > 0$. Let $M = \widetilde{M}/\Lambda$ and $M_n = \widetilde{M}/\Lambda_n$.
\end{definition}

Let $N({Z_0})$ and $N({Z_1})$ denote the $1/4$-neighborhoods in $\ell^\infty$ norm of ${Z_0}$ and ${Z_1}$, respectively. Then, $N({Z_i})$ can be written as a union of the $1/4$-thickening in $\ell^\infty$ norm of the $k$-cells of ${Z_i}$, for $k \in \{0, 1, 2\}$. More precisely, let $I = [-1/4, 1/4]$. For $k \in \{0, 1, 2\}$, any $k$-cell $e^k$ in ${Z_i}$ is a coordinate unit $k$-cube, i.e. $e^k \cong [0,1]^k$. Thus, $e^k$ is a $k$-dimensional $\ell^\infty$ disk of radius $1/2$. Let $e^k_{\circ} \subset e^k$ denote the concentric $\ell^\infty$ sub-disk of radius $1/4$, i.e. $e^k_{\circ}\cong [1/4, 3/4]^k$. Let $B^k = e^k_{\circ} \times I^{5-k}$ denote the normal $1/4$-thickening of $e^k_{\circ}$ in the $(5-k)$ coordinates orthogonal to it. Then, 
$$N({Z_i}) = \bigcup_{0 \leq k \leq 2} \bigcup_{e^k} B^k,$$
where $e^k$ varies over the $k$-cells of ${Z_i}$. Since $B^k$ is a $5$-dimensional PL $k$-handle, $N({Z_i})$ defines a $5$-dimensional PL handlebody with boundary $\partial N({Z_i}) = \widetilde{M}$. Therefore, $\widetilde{M}$ is a PL manifold. Note that the action of $\Lambda$ by translation preserves this PL structure. Therefore, it descends to PL structures on $M$. Since $4$-dimensional PL manifolds have a unique smoothing by \cite[Theorem 2]{milnor}, the PL structure on $M$ can be smoothed in a unique way. We turn the PL embedding $i : M \hookrightarrow \mathbf{R}^5/\Lambda$ into a smooth embedding, by $\varepsilon$-rounding the handles. We equip $M$ with the Riemannian metric $g$ induced from the flat metric on the torus $\mathbf{R}^5/\Lambda$. 

We lift the smooth structure from $M$ to $M_n$ along the covering map $M_n \to M$. Let $g_n$ denote the Riemannian metric on $M_n$ obtained from lifting $g$ along the cover. Furthermore, lifting the smoothed embedding $i : M \hookrightarrow \mathbf{R}^5/\Lambda$, we obtain a smooth embedding $i_n : M_n \hookrightarrow \mathbf{R}^5/\Lambda_n$.

\begin{prop}\label{prop-abg1}
    $M_n$ admits a $\mathrm{Pin}^-$ structure.
\end{prop}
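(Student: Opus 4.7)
The plan is to invoke Lemma \ref{lem-pinhyp} directly: since $M_n$ is exhibited as a (smooth, codimension-one) hypersurface in a spin manifold, the result will follow without further computation. So the task reduces to verifying the two hypotheses of that lemma: (a) the ambient manifold $\mathbf{R}^5/\Lambda_n$ is spin, and (b) the embedding $i_n : M_n \hookrightarrow \mathbf{R}^5/\Lambda_n$ is a smooth hypersurface embedding.

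For (a), I would observe that $\Lambda_n \subset \mathbf{R}^5$ is a rank-five lattice (its generators $2n e_1, \ldots, 2n e_4, u/2 + 2n e_5$ are manifestly $\mathbf{R}$-linearly independent), so the quotient $\mathbf{R}^5/\Lambda_n$ is diffeomorphic to the $5$-torus $T^5$. The $5$-torus is a Lie group, hence parallelizable; in particular its tangent bundle is trivial, so all Stiefel--Whitney classes vanish and it carries a spin structure.

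For (b), the construction explicitly produces a PL embedding $M_n \hookrightarrow \mathbf{R}^5/\Lambda_n$ as the boundary of the PL handlebody obtained by thickening the $2$-skeleton ${Z_0}/\Lambda_n$, and this PL embedding is turned into a smooth embedding $i_n$ by $\varepsilon$-rounding the handles (after lifting the unique smoothing on $M$ to the finite cover $M_n$). In particular, $M_n$ is a smoothly embedded hypersurface in the spin $5$-manifold $\mathbf{R}^5/\Lambda_n$, so Lemma \ref{lem-pinhyp} applies and yields a $\mathrm{Pin}^-$ structure on $M_n$.

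There is no real obstacle here — the content of the statement is entirely bundled into Lemma \ref{lem-pinhyp}, and the only verification required is the spinness of the ambient flat torus, which is automatic. One could alternatively argue intrinsically via the Wu class $v_2(TM_n) = w_2(TM_n) + w_1(TM_n)^2$, using $w_1(NM_n) = w_1(TM_n)$ and $w_2(NM_n) = 0$ (since the normal bundle has rank one) together with the vanishing of $w_1, w_2$ of the ambient $T^5$, but this merely reproduces the proof of Lemma \ref{lem-pinhyp} and seems unnecessary.
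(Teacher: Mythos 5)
Your proposal is correct and follows exactly the paper's argument: the paper's proof of Proposition \ref{prop-abg1} is a one-line application of Lemma \ref{lem-pinhyp} to the hypersurface $M_n \subset \mathbf{R}^5/\Lambda_n \cong T^5$, using that the flat $5$-torus is spin. Your additional verifications (that $\Lambda_n$ has full rank and that the embedding has been smoothed) are sensible bookkeeping but not a different route.
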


\begin{proof}
    Since $M_n$ is a hypersurface in the $5$-torus $\mathbf{R}^5/\Lambda_n \cong T^5$ and $T^5$ is a spin manifold, it admits a $\mathrm{Pin}^-$ structure by Lemma \ref{lem-pinhyp}.
\end{proof}

\begin{remark}
    $M_n$ is non-orientable. Indeed, the translation by $u/2$ exchanges the interior and exterior of $\widetilde{M}$ in the Euclidean $5$-space, given by the set of points $1/4$-close to $Z_0$ and $Z_1$, respectively. 
\end{remark}

The following is proved by essentially the same argument as in \cite[Example 11]{abg24}.

\begin{prop}\label{prop-HSM}
$\mathrm{HS}(M_n;\ZZ_2) \gtrsim n$.
\end{prop}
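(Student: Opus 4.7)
The plan is to construct an explicit $1$-Lipschitz map $M_n \to S^4(cn)$ of nonzero $\mathbf{Z}_2$-degree, for some constant $c > 0$ independent of $n$, by factoring it as
$$M_n \xhookrightarrow{\ i_n\ } T^5_n := \mathbf{R}^5/\Lambda_n \xrightarrow{\ p\ } T^4 \to S^4(cn),$$
where $i_n$ is the smooth embedding from the construction, $p$ is the quotient by the circle subgroup $\mathbf{R} v/\mathbf{Z} v \subset T^5_n$ generated by the primitive lattice vector $v := u/2 + 2ne_5 \in \Lambda_n$, and the last arrow is a standard degree-one collapse. Each factor is $1$-Lipschitz once the geometry of $T^4$ is identified.

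The crucial input is an antisymmetry of the defining function of $\widetilde{M}$. Setting $\phi(x) := d_\infty(x, Z_0) - d_\infty(x, Z_1)$, we have $\widetilde{M} = \phi^{-1}(0)$. Since $Z_1 = Z_0 + u/2$ and both complexes are $\mathbf{Z}^5$-invariant, translation by $u/2$ swaps $Z_0$ and $Z_1$, so
$$\phi(x + u/2) = -\phi(x) \quad \text{and} \quad \phi(x + 2ne_j) = \phi(x) \text{ for } j = 1, \ldots, 5.$$
Combining, $\phi$ is anti-periodic under $v$. Along a generic segment from $p$ to $p + v$ in $\mathbf{R}^5$, $\phi$ changes sign and hence crosses zero transversally an odd number of times. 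In $T^5_n$ this says the loop $\gamma_v$ obtained by projecting this segment intersects $M_n$ in an odd number of points, whereas the loops $\gamma_{2ne_j}$ meet $M_n$ evenly. Letting $\beta_1, \ldots, \beta_4, \beta_v \in H^1(T^5_n; \mathbf{Z}_2)$ denote the basis dual to $2ne_1, \ldots, 2ne_4, v$, these intersection counts pin down $\mathrm{PD}[M_n] = \beta_v$.

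The mod-$2$ degree of $M_n \to T^4$ is then immediate from Poincar\'e duality in $T^5_n$ and the exterior-algebra structure of $H^*(T^5_n; \mathbf{Z}_2)$:
$$\deg_{\mathbf{Z}_2}\bigl(M_n \to T^4\bigr) = \bigl\langle p^*[\omega_{T^4}] \smile \mathrm{PD}[M_n],\ [T^5_n] \bigr\rangle = \bigl\langle \beta_1 \smile \beta_2 \smile \beta_3 \smile \beta_4 \smile \beta_v,\ [T^5_n] \bigr\rangle = 1,$$
using $p^*[\omega_{T^4}] = \beta_1 \smile \cdots \smile \beta_4$. A short linear-algebra check, computing the orthogonal projections of $2ne_j$ onto $v^\perp$, shows the projected lattice $\Lambda_n/\mathbf{Z} v \subset v^\perp$ has shortest vector of Euclidean length $\approx 2n$, so $T^4$ has injectivity radius $\gtrsim n$ and admits a standard $1$-Lipschitz degree-one collapse onto $S^4(cn)$. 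Composing gives $\mathrm{HS}(M_n; \mathbf{Z}_2) \geq cn$.

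The main obstacle is establishing the antisymmetry $\phi(x + v) = -\phi(x)$ and the resulting odd intersection number along $\gamma_v$, which identifies $\mathrm{PD}[M_n]$; once this is in place the rest is routine exterior-algebra bookkeeping on a torus together with a standard Voronoi-cell-to-sphere construction. Presumably this is the content of Appendix \ref{sec-appendix}.
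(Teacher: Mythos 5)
Your proposal is correct and follows essentially the same route as the paper: both compose the embedding $M_n \hookrightarrow \mathbf{R}^5/\Lambda_n$ with projection along $v = u/2 + 2ne_5$ onto a flat $4$-torus of injectivity radius $\gtrsim n$ and then a degree-one collapse to a sphere, and both derive the odd mod-$2$ degree from the fact that translation by $u/2$ (hence by $v$) swaps the two sides of $\widetilde{M}$, so each fiber circle meets $M_n$ an odd number of times. Your antisymmetric function $\phi$ and the Poincar\'e-duality bookkeeping are just a more formal packaging of the paper's direct fiber-intersection count (and, as a small aside, the appendix concerns the ruling cocycle, not this degree computation).
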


\begin{proof}
It suffices to construct a $1$-Lipschitz map of non-zero $\mathbf{Z}_2$-degree from $M_n$ to a round $4$-sphere of radius comparable to $n$. To this end, consider the map $$f : M_n \xrightarrow{i_n} \mathbf{R}^5/\Lambda_n \xrightarrow{j} \mathbf{R}^4/(2n\mathbf{Z})^4 \xrightarrow{k} S^4(n),$$
where $i_n$ is the inclusion, $j$ is given by projection along the vector $v := {u}/2 + 2n {e}_5$ onto the coordinate $4$-torus spanned by ${e}_1, \cdots, {e}_4$, and $k$ is given by crushing the complement of a Euclidean $n$-ball in the flat $4$-torus. 

Evidently, $k$ is $1$-Lipschitz. Note that $i_n$ is a lift of the inclusion $i : M \to \mathbf{R}^5/\Lambda$, and $i$ is $O(1)$-Lipschitz since $M$ is compact. Therefore, the lift $i_n : M_n \to \mathbf{R}^5/\Lambda_n$ is also $O(1)$-Lipschitz. Furthermore, $j$ is $(1+o(1))$-Lipschitz, as $v$ is almost orthogonal to the coordinate plane spanned by ${e}_1, \cdots, {e}_4$, for $n$ sufficiently large. Therefore, $f$ is $O(1)$-Lipschitz. Scaling the target sphere, $f$ defines a $1$-Lipschitz map to a round sphere of radius comparable to $n$. 

Next, we observe that $\deg_{\ZZ_2}(j \circ i_n) \neq 0$. Indeed, fibers of $j$ are circles parallel to $v$ and any such circle intersects $M_n$ an odd number of times. This is because translation by ${u}/2$, and therefore by ${v}$, exchanges the interior and exterior of $\widetilde{M}$ in the Euclidean $5$-space. Since $k$ is evidently degree $1$, $f = k \circ j \circ i_n$ has nonzero $\mathbf{Z}_2$-degree. The conclusion follows. 
\end{proof}

\begin{remark}
    In particular, from Proposition \ref{prop-HSM}, $M$ is $\mathbf{Z}_2$-enlargeable (cf. Definition \ref{def-z2en}).
\end{remark}

We now construct the small rulings on $M_n$. Some of the more technical points in the proof are relegated to Appendix \ref{sec-appendix}.

\begin{prop}\label{prop-abgrule}
    $M_n$ admits a $3/2$-small ruling.
\end{prop}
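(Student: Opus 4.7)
The plan is to take the ruling spheres to be the normal link $2$-spheres of the $2$-dimensional cells of $Z_0$ (equivalently $Z_1$), suitably smoothed. Recall from the handle decomposition that $\widetilde{M}$ is the common boundary of $N(Z_0)$ and $N(Z_1)$, and the free boundary of a $2$-handle $B^2 = e^2_\circ \times I^3$ over a $2$-cell $e^2 \subset Z_0$ is the subset $e^2_\circ \times \partial I^3 \subset \widetilde{M}$. For each $c \in e^2_\circ$, the slice $\{c\} \times \partial I^3$ is a PL $2$-sphere --- the boundary of a cube of side $1/2$ --- with intrinsic surface diameter $\sqrt{5}/2$ in the flat ambient metric, and this stays below $3/2$ after the $\varepsilon$-rounding used to smooth $M$.

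The first step is to verify that these link spheres cover every point of $\widetilde{M}$. For $p = (p_1, \ldots, p_5) \in \mathbf{R}^5$, set $d_i(p) := \mathrm{dist}(p_i, \mathbf{Z})$. One computes directly that $d_\infty(p, Z_0)$ equals the third-smallest value among $d_1(p), \ldots, d_5(p)$ (one minimizes by snapping the three coordinates with smallest fractional distance to integers). Since $d_i(p - u/2) = 1/2 - d_i(p)$, the defining condition $d_\infty(p, Z_0) = d_\infty(p, Z_1) = 1/4$ becomes the statement that the median of $(d_1(p), \ldots, d_5(p))$ equals $1/4$. After reordering coordinates so that $d_1 \leq d_2 \leq 1/4 = d_3 \leq d_4 \leq d_5$, the point $p$ lies on the link sphere of the $2$-cell of $Z_0$ with free directions $(4, 5)$ anchored at the nearest integers in $(1, 2, 3)$; non-generic points with ties among the $d_i$ lie on several link spheres simultaneously. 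Each link sphere has extent at most $1/2$ in any coordinate, so it embeds injectively into $M_n = \widetilde{M}/\Lambda_n$, and the collection of all link spheres descends to a $3/2$-small pre-ruling $\mathfrak{S}_n$ of $M_n$ by smoothly embedded $2$-spheres.

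The main obstacle is the construction of an integral taming class $\omega \in H^2(M_n; \mathbf{Z})$ satisfying $\langle \omega, [\Sigma]\rangle = \pm 1$ for every $\Sigma \in \mathfrak{S}_n$. Heuristically, the link sphere $\Sigma$ of a $2$-cell $e^2 \subset Z_0$ bounds a transverse $3$-disk $\{c\} \times I^3$ in $N(Z_0) \subset T^5_n := \mathbf{R}^5/\Lambda_n$ meeting $Z_0$ in a single point (the center of $e^2$), which suggests taking $\omega$ to be the pullback via $i_n : M_n \hookrightarrow T^5_n$ of a cohomology class Poincar\'e dual to $Z_0 \cup Z_1 \subset T^5_n$. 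The subtlety is that $Z_0$ alone is not $\Lambda_n$-invariant---the translation $u/2 + 2n e_5 \in \Lambda_n$ exchanges $Z_0$ with $Z_1$---so the dual class requires care, perhaps by first passing to the orientation double cover of $M_n$ where $Z_0$ and $Z_1$ descend separately, and then descending the resulting class back to $M_n$ after verifying the evaluation on each ruling sphere is exactly $\pm 1$ (not a higher integer). I expect this cohomological construction, along with the local verification that the smoothing near the corners of $\partial I^3$ preserves the link spheres as smooth embedded submanifolds, to constitute the content of the deferred Appendix~\ref{sec-appendix}.
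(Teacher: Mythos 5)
Your pre-ruling is the same as the paper's: the polyhedral spheres $\{*\}\times\partial I^3$ sitting in the co-attaching regions $e^2_\circ\times\partial I^3$ of the $2$-handles of $N(Z_0)$ and $N(Z_1)$. Your verification that these cover $\widetilde{M}$, via the identity that $d_\infty(p,Z_0)$ is the third-smallest of the coordinate distances $d_i(p)$ and the reflection $d_i(p-u/2)=1/2-d_i(p)$, is correct and is a nice explicit substitute for the paper's appeal to the handlebody structure $\widetilde{M}=\partial N(Z_i)=\bigcup\partial_-B^2$. The diameter bound and the injectivity of the descent to $M_n$ are also fine.

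The genuine gap is the taming class, which is the actual content of the proposition: without a class $\omega\in H^2(M_n;\mathbf{Z})$ with $\langle\omega,[\Sigma]\rangle=\pm1$ you only have a pre-ruling, and the associated circle bundle of Definition \ref{def-assoccirc} is not even defined. You explicitly defer this step, and the heuristic you offer does not typecheck: $Z_0\cup Z_1$ is a $2$-complex in a $5$-torus, so the Poincar\'e dual of its fundamental $2$-cycle lives in $H^3$, not $H^2$. What you actually need is a class of \emph{linking} type --- a $2$-cocycle on $\widetilde{M}$ recording, for each link sphere, the single intersection of a bounding cocore $3$-disk with the skeleton --- and the existence of such a class taking the value exactly $\pm1$ on \emph{every} ruling sphere simultaneously is a nontrivial combinatorial fact, not a formal consequence of transversality. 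The paper produces it by pulling back explicit cellular $2$-cocycles $\psi_i\in C^2(Z_i;\mathbf{Z})$ along the handle-collapse retractions $r_i:\widetilde{M}\to Z_i$ of Lemma \ref{lem-defret} and setting $\Psi=(r_0)^*\psi_0+(r_1)^*\psi_1$: a ruling sphere linking a $2$-cell of $Z_0$ is sent by $r_1$ onto a \emph{jailcell} of $Z_1$ (the boundary of a dual unit $3$-cube, Lemma \ref{lem-jailrule}) and by $r_0$ to a point, so one must exhibit cocycles $\psi_i$ evaluating to $\pm1$ on all jailcells at once, $\mathbf{Z}^5$-equivariantly and compatibly with the swap $\tau(x)=x+u/2$. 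This is done by an explicit formula with a case-by-case check in Proposition \ref{prop-cocycle}. Your suggestion of passing to the orientation double cover does not address this difficulty, which is already present on $\widetilde{M}$ before any quotient; until such a class is exhibited, the proposition is not proved.
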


%We only sketch a proof here, and relegate the finer details to Appendix \ref{sec-appendix}. 

\begin{proof}
    It suffices to construct a $3/2$-small ruling on $\widetilde{M}$ which is $\Lambda$-invariant (i.e. ruling spheres are sent to ruling spheres upon translation by elements of $\Lambda$), and a taming class represented by a $\Lambda$-invariant cocycle. We now describe the construction of the ruling. Let $B^2 = e^2_\circ \times I^3 \subset N(Z_i)$ be a piecewise linear $2$-handle given by thickening a $2$-cell $e^2 \subset Z_i$, and let $\partial_- B^2 = e^2_\circ \times \partial I^3 \subset \partial N(Z_i)$ denote the co-attaching region of the handle. Observe,
    $$\widetilde{M} = \partial N(Z_i) = \bigcup \partial_- B^2,$$
    where the union runs over all piecewise linear $2$-handles $B^2$ of the handlebody $N(Z_i)$. Note that $\partial_- B^2$ admits a pre-ruling by polyhedral spheres $\{*\} \times \partial I^3$. Therefore, we obtain a pre-ruling $\widetilde{\mathfrak{S}}_i$ on $\widetilde{M}$, for $i = 0, 1$. Note that $\widetilde{\mathfrak{S}}_i$ is $\mathbf{Z}^5$-invariant by construction. The action of $u/2$ exchanges $\widetilde{\mathfrak{S}}_0$ and $\widetilde{\mathfrak{S}}_1$. Therefore, the union $\widetilde{\mathfrak{S}} = \widetilde{\mathfrak{S}}_0 \cup \widetilde{\mathfrak{S}}_1$ is a $\Lambda$-invariant pre-ruling.

    It remains to construct a taming class. Let $r_i : N(Z_i) \to Z_i$ be the $\mathbf{Z}^5$-equivariant deformation retract defined by collapsing each handle to its core, see Lemma \ref{lem-defret}. By a slight abuse of notation, we denote the restriction of $r_i$ to the boundary also as $r_i : M \to Z_i$. The key property of the pre-ruling $\widetilde{\mathfrak{S}}_i$ is that every ruling sphere in $\widetilde{\mathfrak{S}}_i$ maps to a \emph{jail-cell} in $Z_i$, given by the boundary of a $3$-cell in the Euclidean $5$-space, see Lemma \ref{lem-jailrule}. In Proposition \ref{prop-cocycle}, we construct cellular $2$-cocycles $\psi_i \in C^2(Z_i; \mathbf{Z})$ such that for any jail-cell $J \subset Z_i$, $\langle \psi_i, J\rangle = \pm 1$. Let $\tau$ denote the translation on $\mathbf{R}^5$ defined by $\tau(x)= x + u/2$. Then, $r_1 \circ \tau = \tau \circ r_0$ and $\psi_0 \circ \tau= \psi_1$. Consider the cocycle,
    $$\Psi = (r_0)^*\psi_0 + (r_1)^*\psi_1 \in C^2(\widetilde{M}; \mathbf{Z}).$$
    Then, $\Psi \circ \tau = \Psi$. Since $\Psi$ is $\mathbf{Z}^5$ invariant, this shows $\Psi$ is invariant under the subgroup generated by $\mathbf{Z}^5$ and $u/2$. In particular, it is $\Lambda$-invariant. Let $\widetilde{\omega} = [\Psi] \in H^2(\widetilde{M}; \mathbf{Z})$ be the cohomology class represented by $\Psi$. Then, $\widetilde{\omega}$ tames the pre-ruling $\widetilde{\mathfrak{S}}$. Upon quotienting $\widetilde{M}$ by the lattice $\Lambda_n \subset \Lambda$, the $3/2$-small ruling $\widetilde{\mathfrak{S}}$ and the taming class $\widetilde{\omega} \in H^2(\widetilde{M}; \mathbf{Z})$ descend to a $3/2$-small ruling $\widetilde{\mathfrak{S}}_n$ on $M_n$ and a taming class $\omega_n \in H^2(M_n; \mathbf{Z})$. This proves the result.
\end{proof}

\section{Main Theorem}\label{sec-main} 

The following proposition is a synthesis of the results of Section \ref{sec-fclhopf} and Section \ref{sec-strat}. 

\begin{prop}\label{prop-final}
    Let $M^{d-1}$ be a Riemannian manifold satisfying the following conditions: \begin{enumerate}
        \item $M$ admits a $\mathrm{Pin}^-$ structure.
        \item $\mathrm{HS}(M;\ZZ_2) \gtrsim R$.
        \item $M$ admits a $3/2$-small ruling $\mathfrak{S}$.
    \end{enumerate}
Let $\pi: E \to M$ be the circle bundle associated with $\mathfrak{S}$. Then, given any $\sigma > 0$, there is a metric $g_E$ on $E$, such that $(E,g_E)$ has $\msc \geq \sigma$ and $\uw{d-2}(E) \gtrsim R$.
\end{prop}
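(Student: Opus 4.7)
The plan is to combine Theorem \ref{cor-fcl3} and Theorem \ref{thm-circbdlmscal} by verifying that a single adapted metric on $E$ can be chosen to satisfy the hypotheses of both simultaneously. The construction is essentially forced: since $\pi : E \to M$ is the circle bundle associated with $\mathfrak{S}$ (cf. Definition \ref{def-assoccirc}), we have $c_1(E) = \omega$ where $\omega \in H^2(M;\mathbf{Z})$ is a class taming $\mathfrak{S}$, and $E$ is a principal $U(1)$-bundle over $M$. The metric $g_E$ will be taken to be an adapted metric (cf. Definition \ref{def-adapted}) with parameters $(\rho/\pi, \theta)$ for a suitable auxiliary connection $1$-form $\theta$ and a fiber radius $\rho > 0$ to be specified below.

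First, I would apply Theorem \ref{thm-circbdlmscal}. Since $M$ is closed and admits a $3/2$-small ruling $\mathfrak{S}$ with associated circle bundle $\pi : E \to M$, that theorem guarantees a threshold $\rho_0 = \rho_0(\sigma, g_M) > 0$ such that for every $\rho \in (0, \rho_0)$, the adapted metric $g_E = \pi^* g_M + (\rho/\pi)^2 \, \theta \otimes \theta$ makes $(E, g_E)$ satisfy $\msc \geq \sigma$. I would fix any such $\rho$ and work with the resulting $g_E$ henceforth.

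Next, I would apply Theorem \ref{cor-fcl3} to this same $g_E$. The hypotheses of Theorem \ref{cor-fcl3} are that $M$ is a closed Riemannian manifold of dimension $d-1$ admitting a $\mathrm{Pin}^-$ structure, and that $E \to M$ is a principal circle bundle equipped with an adapted metric; all three conditions hold by assumption and by construction. The conclusion gives
\[
\uw{d-2}(E, g_E) \;\geq\; \tfrac{1}{2} \, \mathrm{HS}(M; \mathbf{Z}_2) \;\gtrsim\; R,
\]
where the last inequality uses hypothesis (2). Combining this with the macroscopic scalar curvature bound from the previous step yields both conclusions of the proposition for the metric $g_E$.

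There is really no obstacle here beyond bookkeeping: the two theorems were designed to be compatible, since both hinge on the adapted metric $g_E = \pi^* g_M + (\rho/\pi)^2 \, \theta \otimes \theta$ being $U(1)$-invariant with $\pi$ a Riemannian submersion, and neither imposes a condition on the fiber radius that conflicts with the other. The only thing worth double-checking is that the lower bound in Theorem \ref{cor-fcl3} is independent of the fiber radius $\rho$ (which it is, since it depends only on hypersphericity of the base), so shrinking $\rho$ to enforce $\msc \geq \sigma$ does not degrade the Urysohn width estimate. This completes the proof sketch.
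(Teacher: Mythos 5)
Your proposal is correct and follows the same route as the paper: choose an adapted metric with fiber radius small enough for Theorem \ref{thm-circbdlmscal} to give $\msc \geq \sigma$, and observe that Theorem \ref{cor-fcl3} applies to any adapted metric, so the width bound $\uw{d-2}(E) \geq \tfrac12 \mathrm{HS}(M;\ZZ_2) \gtrsim R$ holds regardless of the fiber radius. Your explicit remark that the two requirements do not conflict is exactly the (implicit) bookkeeping in the paper's proof.
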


\begin{proof}
It follows from Theorem \ref{cor-fcl3} that $\uw{d-2}(E) \gtrsim R$. We choose $g_E$ to be an adapted metric (cf. Definition \ref{def-adapted}) on $E$ with radius of the fibers $\rho = \rho(\sigma) > 0$ sufficiently small. Then, it follows from Theorem \ref{thm-circbdlmscal} that $(E, g_E)$ satisfies $\msc \geq \sigma$.
\end{proof}

\begin{remark}
    In Section \ref{sec-construction}, we have given examples of manifolds satisfying the hypotheses of Proposition \ref{prop-final}. One can generate higher dimensional examples of manifolds satisfying these hypotheses from existing examples by taking products with circles of radius $R \gg 0$.
\end{remark}

We can now prove the main result of the paper. 

\begin{theorem}\label{thm-final} 
Conjecture \ref{conj-abgtrue} is false in dimensions four and above. 
\end{theorem}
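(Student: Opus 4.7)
The plan is to combine Proposition \ref{prop-final} with the explicit constructions of Section \ref{sec-construction}. Given $d \geq 4$ and the hypothetical dimensional constants $(\sigma_d, C_d)$ from Conjecture \ref{conj-abgtrue}, the goal is to exhibit, for each positive integer $n$, a closed $d$-dimensional Riemannian manifold $E_n$ with $\msc \geq \sigma_d$ and $\uw{d-2}(E_n) \gtrsim n$. Taking $n$ large enough that $\uw{d-2}(E_n) > C_d$ then contradicts the conjecture in dimension $d$.

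First, I would produce base manifolds $M_n^{d-1}$ verifying the three hypotheses of Proposition \ref{prop-final} with $R = n$: admitting a $\mathrm{Pin}^-$ structure, satisfying $\mathrm{HS}(M_n;\ZZ_2) \gtrsim n$, and carrying a $3/2$-small ruling. For every $d \geq 4$, Construction 1 of Section \ref{sec-corrugration} applied to the $\ZZ_2$-enlargeable $\mathrm{Pin}^-$ manifold $N = T^{d-1}$ provides exactly such a family, with the three conditions verified by Propositions \ref{prop-corrugation1}, \ref{prop-corrugation2}, and \ref{prop-corrugation3} respectively. In dimension five one may alternatively use the Alpert--Balitskiy--Guth example of Construction 2 (Section \ref{sec-abg}), whose properties are supplied by Propositions \ref{prop-abg1}, \ref{prop-HSM}, and \ref{prop-abgrule}; and in higher dimensions this can be propagated upward by taking products with round circles of radius $n$, as indicated in the remark following Proposition \ref{prop-final}.

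Second, feed each such $M_n$ into Proposition \ref{prop-final} with $\sigma = \sigma_d$. This yields the total space $E_n^d$ of the principal circle bundle associated to the ruling, equipped with an adapted metric whose fiber radius $\rho = \rho(\sigma_d)$ depends on $\sigma_d$ but is independent of $n$, so that $\msc(E_n) \geq \sigma_d$ and $\uw{d-2}(E_n) \gtrsim n$ hold uniformly in $n$. The main obstacle in the overall strategy lies not in this final assembly step, which is essentially routine, but upstream in Section \ref{sec-construction}: one must simultaneously arrange for the base to carry a $\mathrm{Pin}^-$ structure (essential for the Hopf-invariant-based width estimate of Theorem \ref{cor-fcl3}) and to admit a small ruling (so that the circle fibers become nullhomotopic inside slightly larger balls, enabling the collapsing argument underlying Theorem \ref{thm-circbdlmscal}). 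Both the corrugation construction and the ABG construction have been engineered precisely to reconcile these two demands, after which the theorem follows.
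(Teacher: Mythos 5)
Your proposal is correct and follows essentially the same route as the paper: the paper's proof of Theorem \ref{thm-final} is exactly the assembly of Proposition \ref{prop-final} with the base manifolds $M_n$ from Section \ref{sec-construction} (either construction), taking $n$ large enough to beat the putative constant $C_d$. Your additional remarks on which construction covers which dimension, and on the uniformity of the fiber radius $\rho$ in $n$, are consistent with (though not required by) the paper's argument, since the width bound of Theorem \ref{cor-fcl3} holds for any adapted metric.
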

\begin{proof}
Consider the manifolds $M_n$ constructed in Section \ref{sec-corrugration} or Section \ref{sec-abg}. By the results established therein, $M_n$ satisfy the hypotheses of Proposition \ref{prop-final} with $R=n$. Thus, applying Proposition \ref{prop-final}, we obtain that for any given $\sigma > 0$, there exists Riemannian manifolds $E_n$ with $\msc \geq \sigma$ and codimension two Urysohn width $\gtrsim n$. Thus, for $n$ sufficiently large, $E_n$ furnish counterexamples to Conjecture \ref{conj-abgtrue}. 
\end{proof}

\appendix
\section{Some details concerning Section \ref{sec-abg}}\label{sec-appendix}

In this section, we explicate certain points in the proof of Proposition \ref{prop-abgrule} for completeness. Let us begin by recalling the $\mathbf{Z}^5$-equivariant retracts $r_i : N(Z_i) \to Z_i$ defined therein.

\begin{lemma} \label{lem-defret}
    There exists a $\mathbf{Z}^5$-equivariant deformation retract $r_i  : N({Z_i}) \to {Z_i}$, where $N({Z_i})$ denotes the $1/4$-neighborhood of ${Z_i}$ in $\ell^\infty$-norm. 
\end{lemma}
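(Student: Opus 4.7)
The plan is to construct the equivariant deformation retract by descending to the compact quotient torus $T^5 := \mathbf{R}^5/\mathbf{Z}^5$, using PL regular neighborhood theory to produce a deformation retract there, and then lifting back to $\mathbf{R}^5$ via the covering space structure to obtain a $\mathbf{Z}^5$-equivariant retract upstairs.

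Since both $Z_i$ and $N(Z_i)$ are $\mathbf{Z}^5$-invariant, and the $\mathbf{Z}^5$-action by translation is free and properly discontinuous, they descend to a PL subcomplex $\bar{Z}_i := Z_i/\mathbf{Z}^5 \subset T^5$ and a compact neighborhood $\bar{N}_i := N(Z_i)/\mathbf{Z}^5 \subset T^5$. The handle description $N(Z_i) = \bigcup_k \bigcup_{e^k \subset Z_i} B^k$ from Section \ref{sec-abg} is $\mathbf{Z}^5$-invariant and descends to a handle decomposition of $\bar{N}_i$, exhibiting it as a PL regular neighborhood of $\bar{Z}_i$ in $T^5$. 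By the PL regular neighborhood theorem, there exists a deformation retract $\bar{r}_i : \bar{N}_i \to \bar{Z}_i$ together with a homotopy $\bar{H} : \bar{N}_i \times [0, 1] \to \bar{N}_i$ satisfying $\bar{H}_0 = \mathrm{id}$, $\bar{H}_1 = \bar{r}_i$, and $\bar{H}_t|_{\bar{Z}_i} = \mathrm{id}_{\bar{Z}_i}$ for all $t \in [0, 1]$.

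To conclude, apply the homotopy lifting property to the covering map $p : N(Z_i) \to \bar{N}_i$: starting from the equivariant initial lift $H_0 := \mathrm{id}_{N(Z_i)}$, the homotopy $\bar{H}$ lifts uniquely to $H : N(Z_i) \times [0, 1] \to N(Z_i)$. For each deck transformation $g \in \mathbf{Z}^5$, the map $(x, t) \mapsto g \cdot H_t(g^{-1} \cdot x)$ is another lift of $\bar{H}$ (since $p$ is invariant under $g$) agreeing with $H$ at $t = 0$; by uniqueness of lifts, $H_t(g \cdot x) = g \cdot H_t(x)$ for all $t$, so $H$ is $\mathbf{Z}^5$-equivariant. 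Setting $r_i := H_1 : N(Z_i) \to Z_i$ yields the desired equivariant deformation retract. The main technical content of the argument is the verification that $\bar{N}_i$ is a PL regular neighborhood of $\bar{Z}_i$, which is immediate from the explicit handle description already established in Section \ref{sec-abg}.
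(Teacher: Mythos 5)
Your argument is correct, but it takes a genuinely different route from the paper's. The paper simply writes down an explicit, manifestly $\mathbf{Z}^5$-equivariant formula: on each handle $B^k = e^k_{\circ} \times I^{5-k}$ the retraction is the coordinate projection onto $e^k_{\circ}$ followed by the scaling by $2$ that carries $e^k_{\circ}$ onto $e^k$; equivariance is then immediate because every ingredient is translation-invariant. Your approach instead passes to the quotient torus, invokes the PL regular neighborhood theorem to produce a deformation retraction downstairs, and recovers equivariance upstairs by uniqueness of homotopy lifts --- a clean and correct covering-space argument (including the point that $H_1$ lands in $Z_i = p^{-1}(\bar{Z}_i)$ and that $H_t$ fixes $Z_i$ by comparing with the constant lift). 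What the soft route costs you is twofold. First, the one step you call ``immediate'' --- that $\bar{N}_i$ is a regular neighborhood of $\bar{Z}_i$, i.e.\ that it collapses to the skeleton --- is exactly where the content lives, and verifying it amounts to collapsing each handle to its core, which is the paper's explicit map in disguise. Second, and more importantly for this paper, the explicit formula for $r_i$ is used downstream: Lemma \ref{lem-jailrule} computes $r_1(\{*\} \times \partial I^3) = J^\vee$ pointwise to identify ruling spheres with jailcell classes, and an abstract retraction obtained from regular neighborhood theory would only be homotopic to the paper's and would not directly support that computation. So your proof establishes the lemma as stated, but the paper's explicit construction is the one actually needed later.
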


\begin{proof}
    Recall $N({Z_i}) = \bigcup_k \bigcup_{e^k} B^k$, where $B^k = e^k_{\circ} \times I^{5-k}$. Let $r_i|_{B^k}$ be defined by the composition $B^k \to e^k_{\circ} \to e^k$, where the first map is the coordinate projection and the second map is scaling by $2$. This gives the desired deformation retract $r_i : N({Z_i}) \to {Z_i}$.
\end{proof}

\begin{lemma}\label{lem-retrhom} $(r_0|_M)_* \oplus (r_1|_M)_* : H_2(\widetilde{M}) \to H_2({Z_0}) \oplus H_2({Z_1})$ is an isomorphism.
\end{lemma}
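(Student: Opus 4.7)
The strategy is to apply Mayer-Vietoris to the decomposition $\mathbf{R}^5 = N(Z_0) \cup N(Z_1)$. The key geometric observation is that by construction of $\widetilde{M}$ as the $\ell^\infty$-equidistant hypersurface between $Z_0$ and $Z_1$, every point of $\mathbf{R}^5$ lies within $\ell^\infty$-distance $1/4$ of at least one of $Z_0$, $Z_1$, so $N(Z_0) \cup N(Z_1) = \mathbf{R}^5$, and $N(Z_0) \cap N(Z_1) = \widetilde{M}$ is exactly the set of points at $\ell^\infty$-distance $1/4$ from both $Z_0$ and $Z_1$.

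To apply Mayer-Vietoris I would enlarge each $N(Z_i)$ slightly to an open neighborhood $U_i$ such that $U_0 \cup U_1 = \mathbf{R}^5$, $U_0 \cap U_1$ deformation retracts onto $\widetilde{M}$, and each $U_i$ still deformation retracts onto $Z_i$ (using an obvious extension of the retract constructed in Lemma \ref{lem-defret}). The Mayer-Vietoris sequence then reads
\begin{equation*}
H_3(\mathbf{R}^5) \longrightarrow H_2(\widetilde{M}) \xrightarrow{\;\;\alpha\;\;} H_2(N(Z_0)) \oplus H_2(N(Z_1)) \longrightarrow H_2(\mathbf{R}^5),
\end{equation*}
where $\alpha$ is induced by the pair of inclusions $\widetilde{M} \hookrightarrow N(Z_i)$. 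Since $\mathbf{R}^5$ is contractible, both flanking groups vanish, so $\alpha$ is an isomorphism.

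Finally, precomposing with the homotopy inverse to $\alpha$ yields the claim: under the identification $H_2(N(Z_i)) \cong H_2(Z_i)$ induced by $r_i$, the map $\alpha$ is exactly $(r_0|_{\widetilde M})_* \oplus (r_1|_{\widetilde M})_*$, since $r_i \circ (\text{incl.}) = r_i|_{\widetilde M}$. Hence this map is an isomorphism. (If one prefers the standard Mayer-Vietoris sign convention $(i_*, -j_*)$, this only changes the isomorphism by an automorphism of the target, which does not affect the conclusion.) The only mildly delicate point is verifying that the open thickenings $U_i$ can be chosen to preserve both the deformation retraction onto $Z_i$ and the intersection being homotopy equivalent to $\widetilde{M}$, but this is straightforward given the handle decomposition $N(Z_i) = \bigcup B^k$ described in Section \ref{sec-abg}.
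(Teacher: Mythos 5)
Your proof is correct and follows essentially the same route as the paper: a Mayer--Vietoris argument for the decomposition $N(Z_0)\cup N(Z_1)=\mathbf{R}^5$ with intersection $\widetilde{M}$, using contractibility of $\mathbf{R}^5$ to see that $(\iota_0)_*\oplus(\iota_1)_*$ is an isomorphism, and then composing with the deformation retractions $r_i$. Your extra care about thickening to open sets and about the sign convention is fine but not needed beyond what the paper records.
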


\begin{proof}
   Let $\iota_i : \widetilde{M} = \partial N(Z_i) \hookrightarrow N(Z_i)$, $i = 0, 1$ denote the inclusions as the boundary. Note that we have $N(Z_0) \cap N(Z_1) = \RR^5$ and $N(Z_0) \cap N(Z_1) =\widetilde{M}$. Therefore, the Mayer-Vietoris sequence gives an isomorphism 
   $$(\iota_0)_*\oplus (\iota_1)_* : H_2(\widetilde{M}) \to H_2(N(Z_0)) \oplus H_2(N(Z_1))$$
   Since $r_i$ are deformation retracts, $(r_i)_* : H_2(N(Z_i)) \to H_2(Z_i)$ are isomorphisms. Since $r_i|_M = r_i \circ \iota_i$, this proves $(r_0|_M)_* \oplus (r_1|_M)_*$ is an isomorphism, as well.
\end{proof}

We introduce the following convenient terminology

\begin{definition}\label{def-jailcell}
    A \emph{jailcell} in ${Z_0}$ (resp. ${Z_1}$) is the boundary of a unit $3$-cube in the Euclidean $5$-space, with vertices given by vectors with integral (resp. half-integral) coordinates. The fundamental class of the jailcells generate $H_2({Z_i})$. We say a class in $H_2(\widetilde{M})$ is a \emph{jailcell class} if its image under the isomorphism
    $$(r_0|_M)_* \oplus (r_1|_M)_* : H_2(\widetilde{M}) \to H_2({Z_0}) \oplus H_2({Z_1}),$$
    is the fundamental class of a unique jailcell belonging to either ${Z_0}$ or ${Z_1}$.
\end{definition}

Recall the pre-ruling $\widetilde{\mathfrak{S}} = \widetilde{\mathfrak{S}}_0 \cup \widetilde{\mathfrak{S}}_1$ of $\widetilde{M}$ as defined in the proof of Proposition \ref{prop-abgrule}.

\begin{lemma}\label{lem-jailrule}
    The ruling spheres of $\widetilde{\mathfrak{S}}$ represent jailcell classes in $H_2(\widetilde{M})$.
\end{lemma}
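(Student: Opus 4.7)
The plan is to compute the image of $[\Sigma]$ under the isomorphism $(r_0|_M)_* \oplus (r_1|_M)_*$ of Lemma \ref{lem-retrhom}. By the symmetry exchanging $Z_0$ and $Z_1$ via $\tau(x)=x+u/2$, it suffices to treat a ruling sphere $\Sigma \in \widetilde{\mathfrak{S}}_0$; so $\Sigma = \{p\} \times \partial I^3 \subset \partial_- B^2$, where $B^2 = e^2_\circ \times I^3$ is a $2$-handle of $N(Z_0)$ corresponding to a $2$-cell $e^2 \subset Z_0$ and $p \in e^2_\circ$. The easy direction is $(r_0|_M)_*[\Sigma] = 0$: the sphere $\Sigma$ is contained in the single handle $B^2$, and $r_0|_{B^2}$ factors through the coordinate projection to $e^2_\circ$, so it collapses $\Sigma = \{p\} \times \partial I^3$ to the single point $2p \in e^2$.

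The main step is to show $(r_1|_M)_*[\Sigma]$ is $\pm$ the fundamental class of a jailcell. Choose coordinates so that $e^2 = [0,1]^2 \times \{(0,0,0)\}$ and $p = (1/2,1/2,0,0,0)$, so $\Sigma = \{(1/2,1/2)\} \times \partial[-1/4,1/4]^3$ in the $(x_3,x_4,x_5)$-directions. I would identify the candidate jailcell $J \subset Z_1$ as the boundary of the unit $3$-cube $\{(1/2,1/2)\} \times [-1/2,1/2]^3$: its eight vertices $\{(1/2,1/2)\} \times \{\pm 1/2\}^3$ lie in $Z_1$, and its six square faces are $2$-cells of $Z_1$. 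Geometrically, $J$ is the ``cell dual'' to $e^2$ in the shifted cubical decomposition. I would then verify that each of the six faces of $\Sigma$ is contained in the $2$-handle of $N(Z_1)$ over the corresponding face of $J$. For instance, on $F := \{(1/2,1/2)\} \times \{1/4\} \times [-1/4,1/4]^2$, a direct $\ell^\infty$-distance computation places $F$ inside the $2$-handle over $e'^2 := \{(1/2,1/2,1/2)\} \times [-1/2,1/2]^2 \subset Z_1$, on which $r_1$ acts by $(1/2,1/2,1/4,t_4,t_5) \mapsto (1/2,1/2,1/2,2t_4,2t_5)$, an affine homeomorphism $F \to e'^2$. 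The analogous identifications for the remaining five faces exhibit $r_1|_\Sigma$ as a piecewise-affine surjection $\Sigma \to J$ that is a homeomorphism on each face. Since both $\Sigma$ and $J$ are boundaries of axis-aligned $3$-cubes and $r_1$ preserves the standard coordinate orientations on each face, the induced map has degree $\pm 1$ on fundamental classes, giving $(r_1|_M)_*[\Sigma] = \pm[J]$.

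The principal technical obstacle is verifying that the piecewise description of $r_1$ on $\Sigma$ is consistent along the edges and corners of $\Sigma$, where a point belongs to several overlapping handles of $N(Z_1)$ simultaneously. One must check that the coordinate projections defining $r_1$ on adjacent handles agree on their intersection; this follows directly from the explicit form of the retract in Lemma \ref{lem-defret}, because on overlapping handles the projections collapse the shared ``non-core'' coordinates to a common lattice midpoint, and the final scaling by $2$ is uniform. Once this book-keeping is in place, combining the two computations yields $(r_0|_M)_* \oplus (r_1|_M)_*[\Sigma] = (0, \pm[J])$, establishing that $[\Sigma]$ is a jailcell class in the sense of Definition \ref{def-jailcell}, as required.
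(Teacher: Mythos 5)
Your proposal is correct and follows essentially the same route as the paper: identify the jailcell $J^\vee \subset Z_1$ dual to the $2$-cell $e^2 \subset Z_0$ and show that the retraction $r_1$ carries the ruling sphere $\{p\}\times\partial I^3$ onto $J^\vee$ (the paper's proof of Lemma \ref{lem-jailrule} is exactly this, stated more tersely). Your additional verifications — that $(r_0|_M)_*[\Sigma]=0$, that the piecewise-affine description of $r_1$ is consistent on overlapping handles, and that the face-by-face identification has degree $\pm 1$ — are details the paper leaves implicit, not a different argument.
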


\begin{proof}
    Let $e^2 \subset Z_0$ be a $2$-cell, and $\{*\} \times \partial I^3 \subset e^2_\circ \times \partial I^3 = \partial_-B^2$ be a ruling sphere from $\widetilde{\mathfrak{S}}_0$. Let us ensure by a homotopy that $* \in e^2_{\circ}$ is the center of the $2$-cell. Let $J^\vee \subset Z_1$ be the jailcell given by the boundary of the unit $3$-cube orthogonal to $e^2$ with center $*$. Then, $r_1(\{*\} \times \partial I^3) = J^\vee$. This shows ruling spheres of $\widetilde{\mathfrak{S}}_0$ represent jailcell classes. The argument for $\widetilde{\mathfrak{S}_1}$ goes through mutatis mutandis using the retract $r_0$. This concludes the proof.
\end{proof}

We now come to the main result of this section. Let us consider the index two subgroup 
$\Gamma = (2\mathbf{Z})^4 \oplus \langle u\rangle \subset \Lambda$.
Let $\tau$ be the translation on $\mathbf{R}^5$ defined by $\tau(x) = x + u/2$. Note that $\tau(Z_0) = Z_1$, and $\tau(\widetilde{M}) = \widetilde{M}$. Furthermore, $\tau|_{Z_0} : Z_0 \to Z_1$ and $\tau|_{\widetilde{M}}$ are cellular.

\begin{prop}\label{prop-cocycle}
    There exist a $\Gamma$-invariant cellular $2$-cocycle $\psi_i \in C^2(Z_i; \mathbf{Z})$ such that for any jailcell $J \subset Z_i$, $\langle \psi_0, J\rangle = \pm 1$. Further, $\psi_1 = \psi_0 \circ \tau$.
\end{prop}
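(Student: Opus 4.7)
Since $Z_0$ is a $2$-dimensional cell complex (no $3$-cells), every integer $2$-cochain on $Z_0$ is automatically a $2$-cocycle, so the problem reduces to assigning integers to the $2$-cells of $Z_0$, $\Gamma$-equivariantly, with alternating sum $\pm 1$ around each jailcell. Parametrizing a $2$-cell as $c_{ij; \vec n}$ (with free coordinate directions $\{i<j\}$ and integer anchor $\vec n \in \mathbf{Z}^5$) and writing $f_{ij}(\vec n) := \psi_0(c_{ij; \vec n})$, the standard cubical boundary formula yields
\[
\psi_0(\partial C_{ijk; \vec m}) \;=\; D_i f_{jk}(\vec m) \,-\, D_j f_{ik}(\vec m) \,+\, D_k f_{ij}(\vec m),
\]
where $D_\ell f(\vec m) := f(\vec m + e_\ell) - f(\vec m)$ is the forward discrete difference. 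The task is then to design the ten functions $f_{ij}$, each depending on $\vec n$ only through its coset in $\mathbf{Z}^5/\Gamma \cong (\mathbf{Z}/2)^4$, making this alternating discrete differential equal to $\pm 1$ for every triple $\{i,j,k\}$ and every $\vec m$.

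I would define each $f_{ij}$ using the parity coordinates $\bar n_\ell := (n_\ell - n_5) \bmod 2 \in \{0,1\}$ for $\ell = 1,\ldots,4$, arranged so that for every triple $(i,j,k)$ the alternating sum resolves to exactly $\pm 1$ as an integer. The combinatorial core is a choice of assignment between the ten coordinate pairs and ten coordinate triples of $\{1,\ldots,5\}$ (a perfect matching in the bipartite inclusion graph, whose existence follows from Hall's theorem) which guarantees that for each triple only one of the three potential contributions fires, producing the exact value $\pm 1$ (its sign determined by a parity of $\vec m$). The cocycle $\psi_1$ is then defined by pullback: $\psi_1 := \tau^* \psi_0$. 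Since $\tau(x) = x + u/2$ sends $Z_0$ bijectively onto $Z_1$ and sends $Z_1$ onto $Z_0 + u = Z_0$ (as $u \in \mathbf{Z}^5$), and since jailcells of $Z_1$ are boundaries of $3$-cubes of the dual cube complex mapped by $\tau$ to jailcells of $Z_0$, we get $\langle \psi_1, J\rangle = \langle \psi_0, \tau(J)\rangle = \pm 1$. The $\Gamma$-invariance of $\psi_1$ follows from that of $\psi_0$ because $\tau$ commutes with $\Gamma$-translations.

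The chief obstacle is reconciling $\Gamma$-invariance with the integer-valued $\pm 1$ condition. A fully $\mathbf{Z}^5$-invariant choice of $\psi_0$ would vanish on every jailcell: $\mathbf{Z}^5$-invariance would force the two parallel faces of any $3$-cube $C$ to have equal $\psi_0$-values, yet these faces appear with opposite signs in $\partial C$, yielding $\psi_0(\partial C) = 0$. The passage to the strictly smaller subgroup $\Gamma$, in particular via the diagonal element $u = e_1 + \cdots + e_5 \in \Gamma$, is what breaks this degeneracy and allows $\psi_0$ to distinguish parallel faces of a common $3$-cube. Beyond the mod-$2$ analysis (which already determines the right cocycle class up to $2$-torsion), one must additionally control magnitudes so that the alternating sum is exactly $\pm 1$ rather than $\pm 3$; this is the most delicate part of the combinatorial design, and is essentially what forces the matching condition described above.
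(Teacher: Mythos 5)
Your framework is the right one and is essentially the paper's: since $Z_0$ has no $3$-cells every integer $2$-cochain is a cocycle, the jailcell pairing is the alternating sum $D_i f_{jk}-D_jf_{ik}+D_kf_{ij}$ of discrete differences, and the entire content of the proposition is the design of ten $\Gamma$-invariant functions $f_{ij}$ making that sum equal to $\pm 1$ for each of the ten triples. But that design is exactly the step you do not carry out, and it is not supplied by Hall's theorem. The existence of a perfect matching between coordinate pairs and triples is a purely combinatorial fact; what the proposition needs in addition is that for each pair $P=\{i,j\}$ matched to a direction $k_P$ there exists a $\Gamma$-periodic integer function $f_P$ with $D_{k_P}f_P\equiv\pm1$ and $D_{k'}f_P\equiv 0$ for the two remaining directions $k'\notin P\cup\{k_P\}$. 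This realizability is not automatic: your own observation that full $\mathbf{Z}^5$-invariance forces everything to vanish shows that such systems of difference conditions can fail to be solvable, and the delicate point here is that on $\mathbf{Z}^5/\Gamma\cong(\mathbf{Z}/2)^4$ translation by $e_5$ acts as the \emph{simultaneous} flip of all four parity coordinates $(n_\ell-n_5)\bmod 2$, so ``$D_5f_P\equiv 0$'' is a global symmetry condition rather than independence of one coordinate, and it must be checked compatible with the nonvanishing condition in the matched direction. Asserting that the matching ``guarantees that for each triple only one contribution fires'' assumes precisely what has to be proved.

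The gap is fillable, and your scheme does work once the functions are exhibited: for $P=\{i,5\}$ matched to $k\le 4$ take $f_P=(n_k-n_5)\bmod 2$; for $P\subset\{1,2,3,4\}$ matched to $5$ take $f_P=(n_i-n_5)\bmod 2$; for $P=\{i,j\}\subset\{1,2,3,4\}$ matched to $k\le 4$ take $f_P=(n_i+n_k)\bmod 2$. Each is $\Gamma$-invariant and satisfies the required vanishing/nonvanishing of differences, which one checks case by case. The paper dispenses with the matching language and instead writes down an explicit three-case formula for $\psi_0$ (zero on pairs containing the index $5$, $v_2+v_3+v_4+v_5\bmod 2$ on the pair $\{1,2\}$, and $v_4+v_5\bmod 2$ otherwise) and then verifies $\langle\psi_0,J\rangle=\pm1$ by direct computation over the triples; note that its cocycle is \emph{not} of matching type, since the pair $\{1,2\}$ serves three triples while the pairs containing $5$ serve none. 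Your treatment of $\psi_1=\psi_0\circ\tau$ and of $\Gamma$-invariance under the commuting translations is fine. As written, though, the construction at the heart of the proposition is asserted rather than proved.
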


\begin{proof}
    Note that every $2$-cell in ${Z_0}$ is determined by an integral lattice point $v \in \mathbf{Z}^5$ and two indices $1 \leq i < j \leq 5$. The $2$-cell corresponding to $(v, i, j)$ is spanned by $v + {e}_i$ and $v + {e}_j$, where ${e}_1, \cdots, {e}_5$ denotes the standard unit vectors in $\mathbf{R}^5$. We define,
    $$\psi_0(v, i, j) = 
    \begin{cases} 
    0, & \mathrm{if}\; j = 5,\\
    v_2 + v_3 + v_4 + v_5 \pmod{2}, & \mathrm{if}\; i = 1, j = 2,\\
    v_4 + v_5 \pmod{2} & \mathrm{otherwise}
    \end{cases}$$
    We extend $\psi_0$ $\mathbf{Z}$-linearly to the cellular chain group $C_2({Z_0})$. This defines a $2$-cochain on ${Z_0}$. Since $Z_0$ has no $3$-cells, $\psi_0$ is automatically a cellular $2$-cocycle. Observe that, by parity, $\psi_0$ is $\Gamma$-invariant. Next, let $J$ be a jailcell, given by the oriented boundary of the unit $3$-cube spanned by the vectors $v + {e}_i, v + {e}_j, v + {e}_k$ for $1 \leq i < j < k \leq 5$. Then, 
    $$\langle \psi_0, J \rangle = \sum_{cyclic} \psi_0(v+{e}_k,i,j) - \psi_0(v,i,j),$$
    where the sum is over all cyclic permutations of $(i, j,k)$. We verify that this sum evaluates to $1$ on a case by case basis. First, suppose $k = 5$. Then, we have
    \begin{align*}\langle \psi_0, J\rangle &= \psi_0(v + {e}_5, i, j) - \psi_0(v, i,j)
    \\& =
    \begin{cases}
        (v_2 + v_3 + v_4 + v_5 + 1) - (v_2 + v_3 + v_4 + v_5) = 1, & \mathrm{if}\; i = 1, j = 2 \\
        (v_4 + v_5 + 1) - (v_4 + v_5) = 1, & \mathrm{otherwise} 
    \end{cases}
    \end{align*}
    {Now,} suppose $\{i, j, k\} \subset \{1, 2, 3, 4\}$. {Consider the case} $i = 1$, $j = 2$ and $k \in \{3, 4\}$. Then, 
    \begin{align*}
        \psi_0({v} + e_k, 1, 2) - \psi_0({v}, 1, 2) &= (v_2 + v_3 + v_4 + v_5 + 1) - (v_2 + v_3 + v_4 + v_5) = 1\\
        \psi_0({v} + e_2, 1, k) - \psi_0({v}, 1, k) &= (v_4 + v_5) - (v_4 + v_5) = 0 \\
        \psi_0(v + e_1, 2, k) - \psi_0(v, 2, k) &= (v_4 + v_5) - (v_4 + v_5) = 0
    \end{align*}
    Hence, $\langle \psi_0, J\rangle = 1$ in this case. {Next, consider the case} $i \in \{1, 2\}$, $j = 3$, $k = 4$. Then,
    \begin{align*}
        \psi_0(v + e_4, i, 3) - \psi_0(v, i, 3) &= (v_4 + 1 + v_5) - (v_4 + v_5) = 1 \\
        \psi_0(v + e_3, i, 4) - \psi_0(v, i, 4) &= (v_4 + v_5) - (v_4 + v_5) = 0 \\
        \psi_0(v + e_i, 3, 4) - \psi_0(v, 3, 4) &= (v_4 + v_5) - (v_4 + v_5) = 0
    \end{align*}
    Hence, $\langle \psi_0, J\rangle = 1$ in this case as well. This finishes the proof.
\end{proof}

\bibliography{main}
\bibliographystyle{alpha}

\end{document}